\documentclass[11pt]{article}
\usepackage{amsfonts,mathrsfs}
\usepackage{amssymb,amsthm}
\usepackage{amsmath}
\usepackage{hyperref}
\usepackage{color}

\oddsidemargin  = 0pt \evensidemargin = 0pt
\marginparwidth = 1in \marginparsep = 0pt
\leftmargin     = 1.15in \topmargin =0pt
\headheight     = 0pt \headsep = 0pt \topskip
=0pt
\footskip       =0.25in \textheight     = 9in
\textwidth      = 6.8in
%
%
\def\sqr#1#2{{\vcenter{\vbox{\hrule height.#2pt
\hbox{\vrule width.#2pt height#1pt \kern#1pt \vrule
width.#2pt}
\hrule height.#2pt}}}}
%

%

%
%

\def\dbE{{\mathbb{E}}}
\def\dbF{{\mathbb{F}}}

\def\dbH{{\mathbb{H}}}

\def\dbN{{\mathbb{N}}}

\def\dbP{{\mathbb{P}}}

\def\dbR{{\mathbb{R}}}

%
%

\def\d{\delta}
\def\e{\varepsilon}

\def\f{\varphi}

\def\om{\omega}

\def\bd{{\mathbf d}}
\def\3n{\negthinspace \negthinspace \negthinspace }
\def\2n{\negthinspace \negthinspace }
\def\1n{\negthinspace }
\def\ns{\noalign{\smallskip} }

\def\ds{\displaystyle}
%
%

\def\G{\Gamma}
\def\D{\Delta}

\def\Om{\Omega}
\def\om{\omega}

%
%

\def\cC{{\cal C}}

\def\cF{{\cal F}}

\def\cJ{{\cal J}}

\def\cL{{\cal L}}

\def\cP{{\cal P}}

\def\cS{{\cal S}}

\def\cU{{\cal U}}

\def\cX{{\cal X}}

%

%

%

\def\mE{{\mathbb{E}}}

\def\no{\noindent}

\def\ss{\smallskip}
\def\ms{\medskip}
\def\bs{\bigskip}
\def\q{\quad}
\def\qq{\qquad}
\def\hb{\hbox}

%
%

\def\liminf{\mathop{\underline{\rm lim}}}

\def\lan{\big\langle}
\def\ran{\big\rangle}

\def\essinf{\mathop{\rm essinf}}

\def\pa{\partial}

\def\wt{\widetilde}
\def\cd{\cdot}

\def\dim{\hbox{\rm dim$\,$}}

\def\ae{\hbox{\rm a.e.{ }}}

\def\deq{\mathop{\buildrel\D\over=}}

\def\({\Big (}
\def\){\Big )}
\def\[{\Big[}
\def\]{\Big]}

\def\={\buildrel \triangle \over =}

\def\be{\begin{equation}}
\def\bel{\begin{equation}\label}
\def\ee{\end{equation}}
\def\bea{\begin{eqnarray}}
\def\eea{\end{eqnarray}}
\def\bt{\begin{theorem}}
\def\et{\end{theorem}}
\def\bc{\begin{corollary}}
\def\ec{\end{corollary}}
\def\bl{\begin{lemma}}
\def\el{\end{lemma}}
\def\bp{\begin{proposition}}
\def\ep{\end{proposition}}
\def\br{\begin{remark}}
\def\er{\end{remark}}
\def\ba{\begin{array}}
\def\ea{\end{array}}
\def\bd{\begin{definition}}
\def\ed{\end{definition}}

\newtheorem{lemma}{Lemma}[section]
\newtheorem{remark}{Remark}[section]

\newtheorem{theorem}{Theorem}[section]
\newtheorem{corollary}{Corollary}[section]

\newtheorem{definition}{Definition}[section]
\newtheorem{proposition}{Proposition}[section]

\DeclareMathOperator*\lowlim{\underline{lim}}
\DeclareMathOperator*\uplim{\overline{lim}}

\allowdisplaybreaks
\makeatletter

\@addtoreset{equation}{section}
\makeatother
\begin{document}

\title{\bf Relationships Between the Maximum Principle and Dynamic Programming for Infinite Dimensional Non-Markovian Stochastic Control Systems\thanks{This work is partially supported by the NSF of China under grant 12025105.}}

\author{
Qi L\"{u}
\footnote{School of Mathematics, Sichuan University, Chengdu, P. R. China. Email: lu@scu.edu.cn. }
~~~ and ~~~
Dingqian Gao
\footnote{School of Mathematics, Sichuan University, Chengdu, P. R. China. Email: dingqiangao@outlook.com.}
}

\date{}

\maketitle

\begin{abstract}
This paper investigates the relationship between Pontryagin's maximum principle  and dynamic programming principle in the context of stochastic optimal control systems governed by stochastic evolution equations with random coefficients in separable Hilbert spaces. Our investigation proceeds through three contributions:
(1). We first establish the formulation of the dynamic programming principle for this class of infinite-dimensional stochastic systems, subsequently deriving the associated stochastic Hamilton-Jacobi-Bellman equations that characterize the value function's evolution. (2). For systems with smooth value functions, we develop explicit correspondence relationships between Pontryagin's maximum principle and dynamic programming principle, elucidating their fundamental connections through precise mathematical characterizations. (3). In the more challenging non-smooth case, we employ tools in  nonsmooth  analysis and  relaxed transposition solution techniques to uncover previously unknown sample-wise relationships between the two principles.
\end{abstract}

\bs

\no{\bf 2020 Mathematics Subject
Classification}. 93E20.

\bs

\no{\bf Key Words:}  Pontryagin type maximum principle; dynamic programming; optimal control; infinite dimensional non-Markovian stochastic control systems. 

\section{Introduction}

Pontryagin's maximum principle (PMP) and Bellman's dynamic programming principle (DPP) represent two cornerstone methodologies in optimal control theory, each providing distinct but complementary approaches to deriving optimal controls (see, e.g., \cite{Bardi1997,Bellman2010,Coron2013,Fabbri2017,Fleming2006,Lu2014,Lu2021,Pontryagin1962,Yong1999}).  This duality naturally gives rise to:

\ss

{\bf Problem (R).} What is the  mathematical relationship between PMP and DPP?

\ss

The seminal work by Pontryagin et al. \cite[Chapter 1, Section 9]{Pontryagin1962} first established the relationship between PMP and DPP for control systems governed by ordinary differential equations (ODEs). Their analysis required the critical assumption that the value function remains smooth throughout the domain of interest. For decades, these theoretical connections remained largely formal (with the exception of certain special cases) due to the inherent nonsmooth nature of value functions in optimal control problems. 
Significant theoretical advances had been made in \cite{Barron1986,Clarke1987,Zhou1990} by employing tools from non-smooth analysis. These work successfully extended the PMP-DPP relationship to cases where the value function lacks smoothness, thereby substantially broadening the applicability of the theory to more general control problems  governed by ODEs.  

Subsequent developments have systematically extended these foundational results to control systems governed by both partial differential equations and stochastic differential equations (see, e.g., \cite{be1982,Bismut1978,Cannarsa1992,Cannarsa1996,Zhou1991}). Following these pioneering works, {\bf Problem (R)} has been thoroughly investigated across various classes of control systems. Given the extensive literature in this field, we refer readers to   \cite{Backhoff2017,Bokanowski2021,Cernea2005,Cannarsa2015,Dong2024,Frankowska2013,Hermosilla2023,Hu2020,Nie2017,Pakniyat2017,Subbotina2006,Wu2025} and their comprehensive bibliographies for a complete overview of existing results.

To our knowledge, \cite{ChenLu} remains the only published work addressing \textbf{Problem (R)} for systems governed by stochastic evolution equations (SEEs) in infinite-dimensional spaces. In their study, the authors examine the relationship between PMP and DPP for Markovian control systems described by SEEs, where control variables appear in both drift and diffusion terms.

The present work advances this research direction by investigating \textbf{Problem (R)} for non-Markovian SEEs in separable Hilbert spaces. As noted in \cite{ChenLu}, the infinite-dimensional setting presents unique challenges: establishing PMP-DPP relationships for non-smooth cases necessitates the transposition solution concept from \cite{Lu2014} to circumvent limitations in stochastic integration theory for general Banach spaces, particularly for second-order adjoint equations. Moreover, the non-Markovian nature of our system introduces additional complexities in both deriving the DPP and connecting it with PMP, going beyond the challenges addressed in \cite{ChenLu}. These technical aspects are developed in detail in Sections \ref{sec-DPP}.

\ss

Before giving the mathematical formulation of our problem,  we first introduce some notations.

\ss

Let $T > 0$ be fixed. Consider a complete filtered probability space $(\Omega, \mathcal{F}, \{\mathcal{F}_t\}_{t\in [0,T]}, \mathbb{P})$, on which a
separable Hilbert space $\wt H$-valued cylindrical Brownian motion
$W(\cdot)$ is defined and $\mathbf{F}\deq \{\cF_t\}_{t\in [0,T]}$ is
the natural filtration generated by $W(\cdot)$. Denote by $\mathbb{F}$ or $\mathscr{P}$ the progressive $\sigma$-algebra with respect to
$\mathbf{F}$.

Let $\cX$ be a Banach space. For any $t\in[0,T]$ and $p\in
[1,\infty)$,   write $L_{\cF_t}^p(\Om;\cX)$ for the Banach space of
all $\cF_t$-measurable, $\cX$-valued $p$th power integrable random variables with the canonical norm. Denote by 
$L^{p}_{\dbF}(\Om;C([t,T];\cX))$ the space of $\mathbf{F}$-adapted continuous processes with the norm 
$$
|\phi(\cd)|_{L^{p}_{\dbF}(\Om;C([t,T];\cX))} \=
\[\mE\sup_{\tau\in
[t,T]}|\phi(\tau)|_\cX^p\]^{1/p};$$
by 
$C_{\dbF}([t,T];L^{p}(\Om;\cX))$ the space of $\mathbf{F}$-adapted processes with continuous paths in $L^p$, equipped with  the norm 
$$
|\phi(\cd)|_{C_{\dbF}([t,T];L^{p}(\Om;\cX))} \= \sup_{\tau\in
[t,T]}\left[\mE|\phi(\tau)|_\cX^p\right]^{1/p};
$$
by
$D_{\dbF}([0,T];L^{p}(\Om;\cX))$ the space of c\`adl\`ag, (i.e., right
continuous with left limits) $\mathbf{F}$-adapted processes with the same norm  as $C_{\dbF}([t,T];L^{p}(\Om;\cX))$.

For any fixed exponents $p_1,p_2,p_3,p_4 \in [1,\infty]$, we define the following function spaces:\vspace{-2mm}
$$
\begin{array}{ll}
\ds L^{p_1}_\dbF(\Om;L^{p_2}(t,T;\cX)) =\Big\{\f:(t,T)\times\Om\to
\cX\;\Big|\;\f(\cd)\hb{
is $\mathbf{F}$-adapted and }\dbE\(\int_t^T|\f(\tau)|_\cX^{p_2}d\tau\)^{\frac{p_1}{p_2}}<\infty\Big\},\\
\ns\ds
L^{p_2}_\dbF(t,T;L^{p_1}(\Om;\cX)) =\Big\{\f:(t,T)\times\Om\to
\cX\;\Big|\;\f(\cd)\hb{ is $\mathbf{F}$-adapted and
}\int_t^T\(\dbE|\f(\tau)|_X^{p_1}\)^{\frac{p_2}
{p_1}}d\tau<\infty\Big\}.
\end{array}
$$
When $p_1 = p_2$, we use the simplified notation $L^{p_1}_\mathbb{F}(t,T;\mathcal{X})$. 
For the scalar case $\mathcal{X} = \mathbb{R}$, we omit $\mathcal{X}$ from the notation. 
The standard modifications apply when any $p_j = \infty$. 
Both $L_{\mathbb{F}}^{p_1}(\Omega;L^{p_2}(t,T;\mathcal{X}))$ and $L_{\mathbb{F}}^{p_2}(t,T;L^{p_1}(\Omega;\mathcal{X}))$ are complete Banach spaces when equipped with their natural norms.

For any $t \in [0,T]$ and $f \in L^1_{\mathcal{F}_T}(\Omega;\mathcal{X})$, we denote by 
$\mathbb{E}(f|\mathcal{F}_t)$ the conditional expectation with respect to $\mathcal{F}_t$ and by 
$\mathbb{E}f$ the standard mathematical expectation.

\ss

Let $\mathcal{Y}$ be another Banach space. We write 
$\mathcal{L}(\mathcal{X}; \mathcal{Y})$ for the Banach space of bounded linear operators from $\mathcal{X}$ to $\mathcal{Y}$ equipped with the standard operator norm and 
$\mathcal{L}(\mathcal{X})$ for $\mathcal{L}(\mathcal{X}; \mathcal{X})$ when $\mathcal{Y} = \mathcal{X}$. If $\mathcal{Y}$ is a Hilbert space, we denote by
$\mathcal{S}(\mathcal{Y})$   the closed subspace of self-adjoint operators in $\mathcal{L}(\mathcal{Y})$.

For any operator-valued process/random variable $M$, we denote by $M^*$ its pointwise adjoint. For instance: If $M\in L^{r_1}_\dbF(0,T;
L^{r_2}(\Om; \cL(\cX)))$, then  $M^*\in L^{r_1}_\dbF(0,T; L^{r_2}(\Om;
\cL(\cX')))$ and  $|M|_{L^{r_1}_\dbF(0,T; L^{r_2}(\Om; \cL(\cX)))}=|M^*|_{L^{r_1}_\dbF(0,T; L^{r_2}(\Om; \cL(\cX')))}$.

Let $ H$ be a separable Hilbert space, and $K$ be a Banach space.

Let $ k \in \mathbb{N} $. We say that a stochastic process $ \phi $ belongs to $ \mathbb{S}_{\mathbb{F}}^{k}([0,T] \times H,K) $ if the following hold:
{\it i)} $ \phi: [0,T] \times \Omega \times H \to K$ is $\mathscr{P}\otimes \mathcal{B}(H)/\mathcal{B}(K)$ measurable;
{\it ii)} For almost all $ (t,\omega) \in [0,T] \times \Omega $, the mapping $ \phi(t,\cdot) $ is $ k $-times continuously Fr\'{e}chet differentiable, and for each fixed $ x \in H $, $\phi(\cdot,x),\dots,\partial_x^k\phi(\cdot,x)$ are $\mathbf{F}$-adapted processes. Additionally, if for each fixed $x\in H$, $\phi(\cdot,x),\dots,\partial_x^k\phi(\cdot,x)$ are continuous, we say that $\phi\in \mathbb{C}_{\mathbb{F}}^{0,k}([0,T]\times H,K)$. If $K=\mathbb{R}$, we write $ \mathbb{S}_{\mathbb{F}}^{k}([0,T] \times H),\mathbb{C}_{\mathbb{F}}^{0,k}([0,T]\times H)$ for simplicity.

Let $A:D(A)\subset H\to H$ be
a linear operator, which generates a generalized contractive $C_{0}$ -semigroup (i.e., $|S(s)|_{\cL(H)}\le e^{cs}$ for some constant $c\in \mathbb{R}$ and any $s\ge 0$).
Write $ \mathcal{L}_2^0$ for the space
of all Hilbert-Schmidt operators from $\wt H$ to $H$, which is also
a separable Hilbert space.  

Let $U$ be a closed and bounded subset of some separable Hilbert space with norm $|\cdot|_U$, and define $U_m \deq \sup_{u \in U} |u|_U$. For $0\le s\le r\le T$, put\vspace{-2mm}
\begin{equation*}
\mathcal{U}[s,r]\deq\big\{u:[s,r]\times\Omega\to U\big| u \mbox{ is $\mathbf{F}$-adapted}\big\}.
\end{equation*}

Now we can introduce the control system to be studied in this paper:\vspace{-2mm}
\begin{equation}\label{system1}
\begin{cases}\ds
dX(t)=\big(AX(t)+a(t,X(t),u(t))\big)dt+b(t,X(t),u(t))dW(t), &t \in (0,T],\\
\ns\ds X(0)=X_0 \in H.
\end{cases}
\end{equation}
The cost functional is\vspace{-2mm}
\begin{equation}\label{cost1}
\mathcal{J}(x;u(\cdot))=\mathbb{E}\Big(\int_0^T
f(t,X(t),u(t))dt+h(X(T))\Big).
\end{equation}

We make the following assumptions for the control system
\eqref{system1} and the cost functional \eqref{cost1} (In this paper, $\mathcal{C}$
is a generic constant which may vary from line to line).

\ss

{\bf (S1)} {\it  Suppose that: i) $a: [0,T]\times \Omega\times H\times U \to H$ is $\mathscr{P}\otimes \mathcal{B}(H)\otimes \mathcal{B}(U)/\mathcal{B}(H)$-measurable and $b:  [0,T]\times \Omega\times H\times U \to \mathcal{L}_2^0$ is $\mathscr{P}\otimes \mathcal{B}(H)\otimes \mathcal{B}(U)/\mathcal{B}(\mathcal{L}_2^0)$-measurable; ii) For any $x\in H$ and $a.e.$ $\omega\in \Omega$, the maps $a(\cdot,x,\cdot):[0,T]\times U\to H $ and
$b(\cdot,x,\cdot):[0,T]\times U\to \mathcal{L}_2^0$ are continuous; and iii)
For any $(t,x_1,x_2,u_1,u_2)\in [0,T]\times H \times H\times U\times U$,\vspace{-2mm}
\begin{equation*}
\begin{cases}
|a(t,x_1,u_1)-a(t,x_2,u_2)|_H+|b(t,x_1,u_1)-b (t,x_2,u_2)|_{\mathcal{L}_2^0}\leq \mathcal{C}(|x_1-x_2|_H + |u_1-u_2|_U), \\
|a(t,x_1,u_1)|_H +|b(t,x_1,u_1)|_{\mathcal{L}_2^0}\leq
\mathcal{C}(1+|x_1|_{H}+|u_1|_U).
\end{cases}
\end{equation*}}

\ss

{\bf (S2)} {\it  Suppose that: i) $f:[0,T]\times\Omega\times H\times U\to \mathbb{R}$
is $\mathscr{P}\otimes \mathcal{B}(H)\otimes
\mathcal{B}(U)/\mathcal{B}(\mathbb{R})$-measurable and
$h:\Omega\times H\to \mathbb{R}$ is
$\mathcal{F}_T\otimes\mathcal{B}(H)/\mathcal{B}(\mathbb{R})$-measurable; ii) For any
$x\in  H$ and $a.e.$ $\omega\in \Omega$, the functional $f(\cdot,x,\cdot): [0,T]\times U\to
\mathbb{R}$ is continuous; and iii) For almost all $(t, \omega) \in [0,T] \times \Omega$ and $(x_1, u_1),  (x_2, u_2) \in H \times U$, \vspace{-4mm} 
\begin{eqnarray*}
\begin{cases}\ds
|f(t,x_1, u_1)-f(t,x_2, u_2)|+|h(x_1)-h(x_2)|\\
\ns\ds\leq \mathcal{C} \big(1+|x_1|_H+|x_2|_H+|u_1|_U+|u_2|_U\big)
\big(|x_1-x_2|_H+|u_1-u_2|_U\big),
\\ \ns\ds
|f(t,x_1, u_1)|+|h(x_1)|\leq \mathcal{C}\big(1+|x_1|_H^2+|u_1|_U^2\big).
\end{cases}
\end{eqnarray*}
}

\ss

{\bf (S3)} {\it For any $(t,u)\in [0,T]\times U$ and $a.e.$ $\omega\in \Omega$, the maps
$a(t,\cd,u)$ and $b (t,\cd,u)$, and the functionals $f(t,\cd,u)$ and
$h(\cd)$ are $C^2$, such that for $\phi(t,x,u)=a(t,x,u),\ b(t,x,u)$
and $\Psi (t,x,u)=f(t,x,u),\ h(x)$, it holds that $\phi_x(t,x,\cd),\
\Psi_x(t,x,\cd),\ \phi_{xx}(t,x,\cd),$ and $\Psi_{xx}(t,x,\cd)$ are
continuous.  Moreover, there is a  modulus of continuity
$\bar{\omega}:[0,\infty)\to [0,\infty)$ such that for any
$(t,x,x_1,x_2,u,u_1,u_2)\in [0,T]\times H\times H\times
H\times U\times U\times U $,\vspace{-2mm}
\begin{equation*}
\begin{cases}
|a_{xx}(t,x,u)|_{\mathcal{L}(H,H;H)}+|b_{xx}(t,x,u)|_{\mathcal{L}(H,H;\mathcal{L}_2^0)}+|\Psi_{xx}(t,x,u)|_{\mathcal{L}(H)}\leq \cC;\\
|a_{xx}(t,x_1,u_1)-a_{xx}(t,x_2,u_2)|_{\mathcal{L}(H,H;H)}+|b _{xx}(t,x_1,u_1)-b_{xx}(t,x_2,u_2)|_ {\mathcal{L}(H,H;\mathcal{L}_2^0)}\\
\  + |\Psi_{xx}(t,x_1,u_1)-\Psi_{xx}(t,x_2,u_2)|_{\mathcal{L}(H)} \leq \bar{\omega}(|x_1-x_2|)+ \cC |u_1-u_2|_U.
\end{cases}
\end{equation*}}

Under {\bf (S1)}, for any $u(\cdot)\in \mathcal{U}[0,T]$,   the control system
\eqref{system1} has a unique mild solution $X(\cdot) \in L^{p}(\Omega;C([0,T];H)),p\ge 2$  (see \cite[Theorem 3.21]{Lu2021}).

\ss

Consider the following optimal control problem:

\textbf{Problem (OP)}.  For any given $x\in
H$, find a
$\bar{u}(\cdot)\in \mathcal{U}[0,T]$ such that\vspace{-2mm}
\begin{equation}\label{OP1}
\mathcal{J}(x,\bar{u}(\cdot))=\inf\limits_{u(\cdot)\in \mathcal{U}[0,T]}\mathcal{J}(x,u(\cdot)).
\end{equation}

Any $\bar{u}(\cdot)\in \mathcal{U}[0,T]$ satisfying \eqref{OP1} is called an {\it optimal control} (of
\textbf{Problem (OP)}). The corresponding state
$\overline{X}(\cdot)$ is called an {\it optimal state}, and
$(\overline{X}(\cdot),\bar{u}(\cdot))$ is called an {\it optimal pair}.

The paper proceeds as follows:  

Section \ref{sec-DPP} develops the DPP for \textbf{Problem (OP)}, providing the theoretical foundation for our analysis. 

In Section \ref{sec-relation}, we establish the connections between the PMP and DPP for \textbf{Problem (OP)}, presenting the main theoretical contributions of this work.

Section \ref{sec-exam} demonstrates two concrete examples that satisfy all assumptions required by our main theorems (Theorems \ref{thm-sdy1}, \ref{th5.1}, and \ref{th6.1}).

\section{Dynamic programming principle for \textbf{Problem (OP)}}\label{sec-DPP}

This section is devoted to establishing the  DPP for \textbf{Problem (OP)}. In the literature, two principal approaches exist for deriving DPP in the context of stochastic optimal control problems governed by SEEs:

The first one is the relaxed control framework approach. In this method,   
one considers not only the control process $u$ but also incorporates the probability space $(\Omega, \mathcal{F}, \{\mathcal{F}_t\}_{t\in [0,T]}, \mathbb{P})$ and Wiener process $W(\cd)$ as part of the controls. It works well for Markovian control systems (see \cite[Chapter 2]{Fabbri2017}). However, it cannot be applied to non-Markovian systems due to the inherent dependence of coefficients on the fixed probability space structure.

The second one follows the idea in \cite[Chapter 2]{Peng1997} (see also \cite{Peng2024}) and establish the DPP without using the relaxed framework (see \cite{ChenLu}).

All coefficients in both the control system \eqref{system1} and the cost functional \eqref{cost1} are stochastic processes or random variables. This feature makes \textbf{Problem (OP)} inherently non-Markovian in nature. Hence we adopt the second approach to handle the technical challenges posed by random coefficients in infinite dimensions and develop a suitable DPP formulation for \textbf{Problem (OP)}. 


This section first recalls some  notations and basic results. Subsequently,  we formulate and establish the DPP for \textbf{Problem (OP)} and derive respective stochastic HJB equations.

\subsection{Preliminaries}\label{ssec-pre}

Let us first recall the definition of the essential infimum of a family of nonnegative random variables. 

\begin{definition}\label{def1}
Let $\mathscr{X}$ be a nonempty family of nonnegative random variables defined on $(\Omega, \mathcal{F}, \mathbb{P})$. The essential infimum of $\mathscr{X}$, denoted by $\operatorname*{essinf}\mathscr{X}$  or $\essinf\limits_{X\in \mathscr X}X$, is a random variable $X^{*}$ satisfying: i) for all $X\in \mathscr X$, $X\geq X^*$, $\dbP$-a.s.;
ii) if $Y$ is another random variable satisfying $ X\geq Y $, $\dbP$-a.s.,
for all $ X\in \mathscr X$, then $X^*\geq Y$, $\dbP$-a.s.
\end{definition}
\begin{lemma}\label{lemma1}\cite[Theorem A.3]{karatzas1998methods}
	Let $\mathscr{X}$ be a nonempty family of nonnegative random variables. Then $X^{*} = \operatorname*{essinf} \mathscr{X}$ exists. Furthermore, if $\mathscr X$ is closed  under pairwise minimum
	(i.e., $X, Y\in \mathscr X$ implies $X\wedge Y\in \mathscr X$), then
	there exists a nonincreasing  sequence  $\{Z_n\}_{n\in \mathbb N}$
	of random variables in $\mathscr X$ such that $X^*=
	\lim\limits_{n\rightarrow\infty} Z_n$, $\dbP$-a.s. Moreover, for any
	sub-algebra $\mathcal G$ of $\mathcal F$, the $\mathcal
	G$-conditional expectation is interchangeable with the essential
	infimum, that is,\vspace{-2mm}
	\begin{eqnarray}
		\mathbb E \big(\essinf\limits_{X\in \mathscr X}X \big |\mathcal G
		\big)= \essinf\limits_{X\in \mathscr X} \mathbb E\big(X
		\big|\mathcal G\big), \q\dbP\text{-a.s.}
	\end{eqnarray}
\end{lemma}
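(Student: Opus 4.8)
\emph{Proof plan.} I would follow the classical construction of the essential infimum through a reduction to a countable subfamily. Since all members of $\mathscr{X}$ are nonnegative, fix a bounded, strictly decreasing function $g:[0,\infty]\to(0,1]$, e.g.\ $g(x)=e^{-x}$ with $g(+\infty)=0$, so that ``taking the essential infimum'' becomes ``maximizing $\mathbb{E}[g(\cdot)]$''. For each countable $\mathscr{C}\subseteq\mathscr{X}$ put $X_{\mathscr{C}}\deq\inf_{X\in\mathscr{C}}X$ (an $\mathcal F$-measurable random variable) and set $\beta\deq\sup\{\mathbb{E}[g(X_{\mathscr{C}})]:\mathscr{C}\subseteq\mathscr{X}\text{ countable}\}\le1$. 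Picking countable families $\mathscr{C}_n$ with $\mathbb{E}[g(X_{\mathscr{C}_n})]\to\beta$ and letting $\mathscr{C}^{*}\deq\bigcup_n\mathscr{C}_n$ (still countable), monotonicity of $g$ and of $\mathscr{C}\mapsto X_{\mathscr{C}}$ yields $\mathbb{E}[g(X^{*})]=\beta$ with $X^{*}\deq X_{\mathscr{C}^{*}}$.

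Next I would check that $X^{*}$ has the two defining properties. Property ii) of Definition \ref{def1} is immediate: if $Y\le X$ a.s.\ for every $X\in\mathscr{X}$, then this holds for every member of $\mathscr{C}^{*}$, hence $X^{*}=\inf_{X\in\mathscr{C}^{*}}X\ge Y$ a.s. For property i), fix $X\in\mathscr{X}$ and run the construction with $\mathscr{C}^{*}\cup\{X\}$: then $X^{*}\wedge X=X_{\mathscr{C}^{*}\cup\{X\}}\le X^{*}$ and $\mathbb{E}[g(X^{*}\wedge X)]\ge\mathbb{E}[g(X^{*})]=\beta$, while $\mathbb{E}[g(X^{*}\wedge X)]\le\beta$ by maximality of $\beta$; since $g$ is strictly decreasing, equality of the two expectations together with $X^{*}\wedge X\le X^{*}$ forces $X^{*}\wedge X=X^{*}$ a.s., i.e.\ $X\ge X^{*}$ a.s. Uniqueness of $\essinf\mathscr{X}$ then follows directly from i) and ii).

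For the second part, assume $\mathscr{X}$ is closed under pairwise minimum. Enumerating $\mathscr{C}^{*}=\{X_1,X_2,\dots\}$ and setting $Z_n\deq X_1\wedge\cdots\wedge X_n$, we get $Z_n\in\mathscr{X}$, the sequence $\{Z_n\}$ is nonincreasing, and $Z_n\downarrow\inf_k X_k=X^{*}$ a.s. For the interchange with conditional expectation, the inequality $\mathbb{E}(X^{*}\mid\mathcal G)\le\essinf_{X\in\mathscr{X}}\mathbb{E}(X\mid\mathcal G)$ a.s.\ is trivial from monotonicity of conditional expectation applied to $X\ge X^{*}$, followed by taking the essential infimum over $X$. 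For the reverse inequality, since $Z_n\in\mathscr{X}$ we have $\mathbb{E}(Z_n\mid\mathcal G)\ge\essinf_{X\in\mathscr{X}}\mathbb{E}(X\mid\mathcal G)$ a.s.\ for each $n$, and $\mathbb{E}(Z_n\mid\mathcal G)\downarrow\mathbb{E}(X^{*}\mid\mathcal G)$ a.s.\ by conditional monotone convergence (dominating by $Z_1$ when the elements of $\mathscr{X}$ are integrable, the case relevant for the DPP, and otherwise localizing on $\{\mathbb{E}(Z_1\mid\mathcal G)\le m\}$ and letting $m\to\infty$); letting $n\to\infty$ gives $\mathbb{E}(X^{*}\mid\mathcal G)\ge\essinf_{X\in\mathscr{X}}\mathbb{E}(X\mid\mathcal G)$ a.s., which combined with the trivial inequality proves the claimed identity.

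I expect the main obstacle to be the first step: the reduction to a countable subfamily and the extraction of the maximizing countable family $\mathscr{C}^{*}$, which is exactly what makes $X^{*}$ well defined and $\mathcal F$-measurable. A secondary delicate point is passing the conditional expectation through the limit $Z_n\downarrow X^{*}$ in the last assertion; the hypothesis that $\mathscr{X}$ be closed under pairwise minimum is used precisely there, to produce the nonincreasing approximating sequence $\{Z_n\}\subseteq\mathscr{X}$ to which conditional monotone convergence can be applied.
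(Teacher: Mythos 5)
The paper offers no proof of this lemma at all: it is imported verbatim as \cite[Theorem A.3]{karatzas1998methods}, so there is nothing internal to compare your argument against. What you have written is, in substance, the standard Karatzas--Shreve construction (reduction to a countable subfamily by maximizing $\mathbb{E}[g(X_{\mathscr{C}})]$ over countable $\mathscr{C}$ for a bounded strictly decreasing $g$, verification of the two defining properties of $X^*$ via the maximality of $\beta$, the sequence $Z_n=X_1\wedge\cdots\wedge X_n$ under closure, and conditional monotone convergence for the interchange), and it is correct. The one place that deserves emphasis is exactly the one you flag: the decreasing conditional MCT in the last step genuinely requires some integrability. For an arbitrary family of nonnegative, possibly non-integrable random variables the interchange identity can in fact fail (take $\Omega=(0,1)$, $\mathcal G$ trivial, $Z_n=\omega^{-1}\chi_{(0,1/n)}$: then $\essinf_n Z_n=0$ but $\mathbb{E}Z_n=\infty$ for every $n$), so the statement as transcribed is slightly too generous; your localization on $\{\mathbb{E}(Z_1\mid\mathcal G)\le m\}$ makes $Z_1\chi_{\{\mathbb{E}(Z_1\mid\mathcal G)\le m\}}$ integrable and rescues the identity on $\{\mathbb{E}(Z_1\mid\mathcal G)<\infty\}$, but not beyond, and nothing can, since the general claim is false there. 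This is harmless for the paper, because in every application (Lemma \ref{lem:3.6}, Corollary \ref{cor1}) the family is dominated by an integrable random variable of the form $\mathcal{C}(1+|\xi|_H^2)$; it would be worth recording that hypothesis explicitly if the proof were to be included.
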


By applying Lemma
\ref{lemma1} to the family $\{ X-Y| X \in \mathscr X\}$, we get the following result:
\begin{corollary}\label{cor1}
Let $\mathscr X$ be a family of random variables that are uniformly
bounded from below by another random variable $Y$, i.e., for each $
X \in \mathscr X$, $X \ge Y$. Then the conclusions in Lemma
\ref{lemma1} also hold.
\end{corollary}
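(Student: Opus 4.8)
The plan is to reduce everything to Lemma \ref{lemma1} via the translation $X \mapsto X - Y$ indicated just above the statement. First I would set $\mathscr{X}' \deq \{X - Y : X \in \mathscr{X}\}$. Since $X \ge Y$ $\dbP$-a.s.\ for every $X \in \mathscr{X}$, each element of $\mathscr{X}'$ is a nonnegative random variable, so Lemma \ref{lemma1} applies to $\mathscr{X}'$ and $(X')^* \deq \essinf \mathscr{X}'$ exists. I then claim that $X^* \deq (X')^* + Y = \essinf \mathscr{X}$, and verify the two clauses of Definition \ref{def1}: for $X \in \mathscr{X}$ we have $X - Y \ge (X')^*$ $\dbP$-a.s., hence $X \ge X^*$ $\dbP$-a.s.; and if $\widetilde Y$ is any random variable with $X \ge \widetilde Y$ $\dbP$-a.s.\ for all $X \in \mathscr{X}$, then $X - Y \ge \widetilde Y - Y$ for all such $X$, so $\widetilde Y - Y \le (X')^*$ by the minimality clause for $\essinf \mathscr{X}'$, and therefore $\widetilde Y \le X^*$ $\dbP$-a.s. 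This establishes existence of $\essinf \mathscr{X}$ together with the representation $\essinf \mathscr{X} = \essinf \mathscr{X}' + Y$.

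Next, for the monotone approximation, I would note that $\mathscr{X}'$ inherits closedness under pairwise minimum from $\mathscr{X}$, since $(X_1 - Y) \wedge (X_2 - Y) = (X_1 \wedge X_2) - Y$. Hence Lemma \ref{lemma1} provides a nonincreasing sequence $\{Z_n'\}_{n\in\dbN}$ in $\mathscr{X}'$ with $(X')^* = \lim_{n\to\infty} Z_n'$ $\dbP$-a.s. Writing $Z_n' = Z_n - Y$ with $Z_n \in \mathscr{X}$, the sequence $\{Z_n\}_{n\in\dbN}$ is again nonincreasing (adding the fixed $Y$ preserves the order) and $\essinf \mathscr{X} = X^* = \lim_{n\to\infty}(Z_n' + Y) = \lim_{n\to\infty} Z_n$ $\dbP$-a.s.

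Finally, for the interchange with conditional expectation, fix a sub-$\sigma$-algebra $\mathcal{G}$ of $\mathcal{F}$. Applying the last assertion of Lemma \ref{lemma1} to $\mathscr{X}'$ gives $\dbE\big((X')^* \mid \mathcal{G}\big) = \essinf_{X \in \mathscr{X}} \dbE\big(X - Y \mid \mathcal{G}\big)$ $\dbP$-a.s. Using additivity of the $\mathcal{G}$-conditional expectation (legitimate because $Y$, and hence every $X \in \mathscr{X}$, has a well-defined $\mathcal{G}$-conditional expectation in the settings where this corollary is invoked) to extract the common term $\dbE(Y \mid \mathcal{G})$ from both sides, and then adding it back to $\dbE\big((X')^* \mid \mathcal{G}\big) = \dbE\big(\essinf\mathscr{X} - Y \mid \mathcal{G}\big)$, one obtains $\dbE\big(\essinf_{X \in \mathscr{X}} X \mid \mathcal{G}\big) = \essinf_{X \in \mathscr{X}} \dbE\big(X \mid \mathcal{G}\big)$ $\dbP$-a.s.

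There is no genuine obstacle in this argument; it is a straightforward reduction. The only points deserving a moment's care are that the shift by $Y$ respects both defining properties of the essential infimum in Definition \ref{def1}, that it preserves closedness under pairwise minimum as well as the monotonicity of the approximating sequence, and that $\dbE(\,\cdot \mid \mathcal{G})$ is additive on the random variables involved so that the common term $\dbE(Y \mid \mathcal{G})$ cancels cleanly. All of these are routine.
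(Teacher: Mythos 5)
Your proposal is correct and is exactly the paper's argument: the paper proves this corollary by the single remark that one applies Lemma \ref{lemma1} to the shifted family $\{X-Y \mid X\in\mathscr X\}$, which is precisely the reduction you carry out (and verify in more detail, including the point that additivity of $\mathbb{E}(\cdot\mid\mathcal G)$ is needed to cancel the common term $\mathbb{E}(Y\mid\mathcal G)$, which the paper leaves implicit).
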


For any given initial condition $(t,\xi) \in [0,T] \times L^p_{\mathcal{F}_t}(\Omega; H),p\ge 2$ and control process $u(\cdot) \in \mathcal{U}[t,T]$, we consider the controlled stochastic evolution system:\vspace{-2mm}
\begin{eqnarray} \label{system2}
\left\{
\begin{array}{ll}\ds
dX(s)= AX(s) + a(s,X(s),u(s))ds+b(s,X(s),u(s))d W(s) &\mbox{ in } (t,T],\\
\ns\ds X(t)=  \xi.
\end{array}
\right.
\end{eqnarray}
Under {\bf (S1)}, for any $u(\cdot)\in \mathcal{U}[t,T]$, the control system 
\eqref{system2} admits a unique mild solution $X(\cdot;t,\xi,u) \in
L^{p}(\Omega;C([0,T];H)),p\ge 2$  (see \cite[Theorem 3.21]{Lu2021}).
 In the rest of this paper,  when the dependence on $(t,\xi,u)$ is clear from context or not essential to emphasize, we shall simply write $X(\cdot)$ to denote the solution of \eqref{system2} for notational brevity.


We have the following fundamental estimates:

\begin{lemma}\label{lemma_esti_sde}
Let Assumption  {\bf (S1)} hold. For $t\in [0,T]$, $u(\cdot),\tilde
u(\cdot)\in\cU[t, T], p\ge 2$, and $\xi,\tilde \xi \in L^p_{\cF_t}(\Omega;
H)$, there exists a positive constant $\cC$ such that\vspace{-2mm}
\begin{equation}\label{eq:3.15-1}
\mathbb E\(\sup_{t\le s \le T}|X(s;t,\xi,u)|_H^p \big |\mathcal F_t
\)\le \cC\big(1+|\xi|_H^p\big),\q\dbP\mbox{-a.s.}\vspace{-2mm}
\end{equation}
and\vspace{-4mm}
\begin{equation} \label{eq:3.15-2}
\begin{array}{ll}\ds
\mathbb E\(\sup_{t\leq s\leq T}
|X(s;t, \xi,u)-X(s;t, \tilde \xi, \tilde  u)|_H^p\big|\cF_{t}\)  \leq \cC \[|\xi-\tilde\xi|^p_H+\mathbb
E\(\int_t^T|u(s)-\tilde u(s)|_{U}^pds\Big| \cF_t\)\],\q\dbP\mbox{-a.s.}\vspace{-2mm}
\end{array}\vspace{-2mm}
\end{equation}
\end{lemma}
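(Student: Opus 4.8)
The plan is to prove the two conditional estimates \eqref{eq:3.15-1} and \eqref{eq:3.15-2} by the standard fixed-point/a-priori-estimate machinery for mild solutions of SEEs, but carried out under the conditional expectation $\mathbb E(\cdot|\mathcal F_t)$ rather than the plain expectation. The key tools are the factorization/stochastic Fubini representation of the mild solution, Burkholder--Davis--Gundy for the stochastic convolution, the contractivity-type bound $|S(s)|_{\mathcal L(H)}\le e^{cs}$ on the semigroup, the Lipschitz and linear-growth hypotheses in {\bf (S1)}, and a conditional Gronwall argument. The only genuinely ``non-Markovian'' subtlety is that all constants must be deterministic and the Gronwall step must be run $\mathbb P$-a.s. in the conditional setting; since the driving coefficients are $\mathbf F$-progressive and bounded uniformly in $\omega$ in the relevant norms, this causes no real difficulty but must be stated carefully.

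First I would prove \eqref{eq:3.15-1}. Write the mild formulation
\[
X(s)=S(s-t)\xi+\int_t^s S(s-r)a(r,X(r),u(r))\,dr+\int_t^s S(s-r)b(r,X(r),u(r))\,dW(r),\qquad s\in[t,T].
\]
Take $|\cdot|_H^p$, use the elementary inequality $|x_1+x_2+x_3|^p\le 3^{p-1}(|x_1|^p+|x_2|^p+|x_3|^p)$, then $\sup_{t\le s\le T}$ and $\mathbb E(\cdot|\mathcal F_t)$. For the drift term, Hölder in $r$ together with $|S(s-r)|_{\mathcal L(H)}\le e^{c(T-t)}\le e^{cT}$ and the linear growth bound in {\bf (S1)} gives a bound of the form $\mathcal C\big(1+\mathbb E(\int_t^T|X(r)|_H^p\,dr\,|\mathcal F_t)+\mathbb E(\int_t^T|u(r)|_U^p\,dr\,|\mathcal F_t)\big)$, and the last term is bounded by $\mathcal C(T-t)U_m^p$ since $U$ is bounded. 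For the stochastic convolution, I would invoke the conditional Burkholder--Davis--Gundy inequality (or, to avoid the maximal-inequality-for-stochastic-convolution issue, the factorization method of Da Prato--Kwapień--Zabczyk, which reduces the sup estimate to an $L^p(t,T)$ estimate of an ordinary convolution of a process controlled by BDG) to get the same type of bound. Moving the $\sup_s$ inside and using Fubini for conditional expectations yields
\[
\mathbb E\Big(\sup_{t\le s\le T}|X(s)|_H^p\,\Big|\,\mathcal F_t\Big)\le \mathcal C\big(1+|\xi|_H^p\big)+\mathcal C\int_t^T\mathbb E\Big(\sup_{t\le \sigma\le r}|X(\sigma)|_H^p\,\Big|\,\mathcal F_t\Big)\,dr,
\]
and a deterministic Gronwall lemma applied to $r\mapsto\mathbb E(\sup_{[t,r]}|X|_H^p|\mathcal F_t)$ (which one should first check is a.s.\ finite, e.g.\ because $X(\cdot)\in L^p(\Omega;C([0,T];H))$ already by \cite[Theorem 3.21]{Lu2021}, so the conditional sup is a.s.\ finite) gives \eqref{eq:3.15-1} with a constant depending only on $c,T,\mathcal C,U_m,p$.

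Second, \eqref{eq:3.15-2} follows by the same scheme applied to the difference $Y(s)\deq X(s;t,\xi,u)-X(s;t,\tilde\xi,\tilde u)$, which satisfies
\[
Y(s)=S(s-t)(\xi-\tilde\xi)+\int_t^s S(s-r)\big[a(r,X(r),u(r))-a(r,\tilde X(r),\tilde u(r))\big]dr+\int_t^s S(s-r)\big[b(r,X(r),u(r))-b(r,\tilde X(r),\tilde u(r))\big]dW(r).
\]
Now the Lipschitz estimate in {\bf (S1)} bounds the integrand differences by $\mathcal C(|Y(r)|_H+|u(r)-\tilde u(r)|_U)$; after taking $|\cdot|_H^p$, $\sup$, and $\mathbb E(\cdot|\mathcal F_t)$ and proceeding exactly as before (Hölder on the drift, conditional BDG or factorization on the diffusion, semigroup bound, Fubini), one arrives at
\[
\mathbb E\Big(\sup_{t\le s\le T}|Y(s)|_H^p\,\Big|\,\mathcal F_t\Big)\le \mathcal C\,|\xi-\tilde\xi|_H^p+\mathcal C\,\mathbb E\Big(\int_t^T|u(r)-\tilde u(r)|_U^p\,dr\,\Big|\,\mathcal F_t\Big)+\mathcal C\int_t^T\mathbb E\Big(\sup_{t\le\sigma\le r}|Y(\sigma)|_H^p\,\Big|\,\mathcal F_t\Big)\,dr,
\]
and the conditional Gronwall argument closes the estimate. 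The main obstacle — really the only point requiring care — is handling the stochastic convolution's maximal ($\sup_s$) bound under conditional expectation in a general separable Hilbert space $H$: the cleanest route is the factorization method, which trades the sup for an extra $L^q$ integration in the time variable and reduces everything to a conditional BDG bound for the (non-convolution) stochastic integral plus Young's inequality for convolutions, so that no maximal inequality for stochastic convolutions is needed and all constants remain deterministic; this is exactly the argument behind \cite[Theorem 3.21]{Lu2021}, and one could alternatively just cite that theorem's proof and note that it localizes to conditional expectations without change.
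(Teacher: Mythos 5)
Your proposal is correct and follows essentially the same route as the paper: mild formulation, H\"older on the drift convolution, a Burkholder--Davis--Gundy type maximal inequality for the stochastic convolution (valid here because the semigroup is a generalized contraction), all taken under $\mathbb E(\cdot\,|\,\mathcal F_t)$, and the same scheme applied to the difference process for \eqref{eq:3.15-2}. The only divergence is the closing step: you invoke a conditional Gronwall argument, whereas the paper picks $t_1$ so that the accumulated coefficient on $[t,t_1]$ equals $1/2$, absorbs the supremum term, and iterates over finitely many subintervals --- the two devices are interchangeable here, and your explicit remark that one must first know the conditional supremum is a.s.\ finite (from $X(\cdot)\in L^p(\Omega;C([0,T];H))$) to justify the absorption/Gronwall step addresses a point the paper leaves implicit.
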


Although Lemma \ref{lemma_esti_sde} represents standard results in stochastic analysis, we could not locate a precise reference combining both estimates under our specific assumptions.   Hence, we provide a proof below.

\begin{proof}[Proof of Lemma \ref{lemma_esti_sde}] 
Using the mild solution representation:\vspace{-2mm}
$$
\begin{array}{ll}\ds
X(s) = S(s-t)\xi + \int_t^s S(s-r) a(r,X(r),u(r))d\tau +  \int_t^s S(s-r) b(r,X(r),u(r))dW(r),
\end{array}
$$
we obtain the preliminary estimate:\vspace{-2mm}
\begin{equation}\label{4.11-eq1}
\begin{array}{ll}\ds
\mE\big(\sup_{s\in [t,t_1]}\big|X(s)\big|_H^p\big|\cF_t\big)  \leq\3n&\ds
\cC\bigg[\big|\xi\big|_H^p + \mE\Big(\sup_{s\in [t,t_1]}\Big|\int_t^s
S(s-r) a(r,X(r),u(r))dr\Big|_H^p\Big|\cF_t\Big)\\
\ns&\ds \q+ \mE\Big(\sup_{s\in [t,t_1]}\Big|\int_t^s S(s-r)
b(r,X(r),u(r))dW(r)\Big|_H^p\Big|\cF_t\Big)\bigg]$$.
\end{array}\vspace{-2mm}
\end{equation}

From Assumption {\bf (S1)}, the drift term estimate is\vspace{-2mm}
\begin{equation}\label{4.11-eq2}
\begin{array}{ll}\ds
\mE\Big(\sup_{s\in [t,t_1]}\Big|\int_t^s
S(s-r) a(r,X(r),u(r))dr\Big|_H^p\Big|\cF_t\Big)\\
\ns\ds \leq 
\mE\Big(\sup_{s\in [t,t_1]}\Big\{\Big|\int_t^{s}1dr\Big|^{\frac{1}{q}}\int_t^{s}\big|S(s-r) a(r,X(r),u(r))\big|_H^pdr\Big\}\Big|\cF_t\Big)\\
\ns\ds \leq \mE\Big(|t_1-t|^{1+\frac{1}{q}}\sup_{r\in [0,T]}\big|S(r)|_{\cL(H)}^p\sup_{r\in [t,t_1]}\big|a(r,X(r),u(r))\big|_H^p\Big|\cF_t\Big)\\
\ns\ds \leq \cC\sup_{r\in [0,T]}\big|S(r)|_{\cL(H)}^p|t_1-t|^{1+\frac{1}{q}}\mE\Big(\sup_{r\in [t,t_1]}\big(1+\big|X(r)\big|_H^p+|u|_U^p\big)\Big|\cF_t\Big)\\
\ns\ds \leq \cC\sup_{r\in [0,T]}\big|S(r)|_{\cL(H)}^p|t_1-t|^{1+\frac{1}{q}}\Big(1+\mE \big(\sup_{r\in [t,t_1]}\big|X(r)\big|_H^p\Big|\cF_t\big)+U_m^p\Big),\\
\end{array}\vspace{-2mm}
\end{equation}
where $\frac{1}{p}+\frac{1}{q}=1$.
Similarly, by the assumption on $A$ and using a Burkholder-Davis-Gundy type inequality (see \cite[Theorem 3.18 ]{Lu2021}), we obtain an estimate for the diffusion term:\vspace{-2mm}
\begin{equation}\label{4.11-eq3}
\begin{array}{ll}\ds
\mE\Big(\sup_{s\in [t,t_1]}\Big|\int_t^{s} S(s-r)
b(r,X(r),u(r))dW(r)\Big|_H^p\Big|\cF_t\Big)\\
\ns\ds \leq \cC_p\mE\bigg\{\Big(\int_t^{t_1}\big|b(r,X(r),u(r))\big|_{\cL_2^0}^2 dr\Big)^{\frac{p}{2}}\Big|\cF_t\bigg\}\\
\ns\ds \leq\cC|t_1-t|^{\frac{p}{2}}\Big(1+\mE \big(\sup_{r\in [t,t_1]}\big|X(r)\big|_H^p\Big|\cF_t\big)+U_m^p\Big).\\
\end{array}\vspace{-2mm}
\end{equation}
Denoting \vspace{-2mm}
$$C(t_1):=\cC|t_1-t|^{\frac{p}{2}}+\sup_{r\in [0,T]}\big|S(r)|_{\cL(H)}^p|t_1-t|^{1+\frac{1}{q}},$$
and combining \eqref{4.11-eq1}--\eqref{4.11-eq3},\vspace{-2mm}
\begin{equation}\label{eq4}
\mE\big(\sup_{s\in [t,t_1]}\big|X(s)\big|_H^p\big|\cF_t\big)\le C(t_1)\mE\big(\sup_{s\in [t,t_1]}\big|X(s)\big|_H^p\big|\cF_t\big)+\cC(1+ |\xi|_H^p).\vspace{-2mm}
\end{equation}
Choosing $t_1>0$ such that $C(t_1)=\frac{1}{2}$ yields \vspace{-2mm}
$$
\mE\big(\sup_{s\in [t,t_1]}\big|X(s)\big|_H^p\big|\cF_t\big)\le \cC(1+ |\xi|_H^p).\vspace{-2mm}
$$
Recursively, \eqref{eq:3.15-1} follows.

For the continuous dependence estimates, let $X(\cdot) \equiv X(\cdot;t,\xi,u)$ and $\widetilde{X}(\cdot) \equiv X(\cdot;t,\tilde{\xi},\tilde{u})$: \vspace{-2mm}
\begin{equation*}
\begin{array}{ll}\ds
\mE\big(\sup_{s\in [t,r]}\big|X(s)-\widetilde{X}(s)\big|_H^p\big|\cF_t\big)\\
\ns\ds\leq
\cC\bigg[\big|\xi\big|_H^p + \mE\Big(\sup_{s\in [t,t_1]}\int_t^s\big|
S(s-r) \big[a(r,X(r),u(r))-a(r,\widetilde{X}(r),\tilde{u}(r))\big]dr\big|_H^p\Big|\cF_t\Big)\\
\ns\ds \q+ \mE\Big(\sup_{s\in [t,t_1]}\int_t^s \big|S(s-r)
\big[b(r,X(r),u(r))-b(r,\widetilde{X}(r),\tilde{u}(r))\big]dW(r)\big|_H^p\Big|\cF_t\Big)\bigg].
\end{array}\vspace{-2mm}
\end{equation*}
Using Assumption {\bf (S1)}:\vspace{-2mm}
\begin{equation*}
\begin{array}{ll}\ds
\mE\Big(\sup_{s\in [t,t_1]}\Big|\int_t^s
S(s-r)\big[a(r,X(r),u(r))-a(r,\widetilde{X}(r),\tilde{u}(r))\big]dr\Big|_H^p\Big|\cF_t\Big)\\
\ns\ds \leq 
\mE\Big(\sup_{s\in [t,t_1]}\Big\{\Big|\int_t^{s}1dr\Big|^{\frac{1}{q}}\int_t^{s}\big|S(s-r) \big[a(r,X(r),u(r))-a(r,\widetilde{X}(r),\tilde{u}(r))\big]\big|_H^pdr\Big\}\Big|\cF_t\Big)\\
\ns\ds \leq \cC\sup_{r\in [0,T]}\big|S(r)|_{\cL(H)}^p|t_1-t|^{1+\frac{1}{q}}\Big(\mE\big(\sup_{r\in [t,t_1]}\big|X(r)-\widetilde{X}(r))\big|_H^p\Big|\cF_t\big)+\mE\Big(\int_t^{t_1}|u(r)-\widetilde{u}(r)|_U^pdr\Big|\cF_t\Big)\Big).
\end{array}\vspace{-2mm}
\end{equation*}
Analogously to \eqref{4.11-eq3}, diffusion term estimates follow. Applying the same method as \eqref{eq4} recursively yields \eqref{eq:3.15-2}.
\end{proof}

For convenience and completeness, let us recall the concept of regular conditional probability:
\begin{lemma}\label{lm71}
Let $\mathcal{G}$ be a sub-$\sigma$-algebra of $\mathcal{F}$. Then there exists a map $p:  \Omega\times \mathcal{F}\to [0,1]$, called a regular conditional probability given $\mathcal{G}$, such that 

\ss

(i)  for each $\omega\in \Omega$, $p(\omega,\cdot)$ is a probability measure on $\mathcal{F}$;

\ss

(ii)  for each $A\in \mathcal{F}$, the function $p(\cdot,A)$ is $\mathcal{G}$-measurable;

\ss

(iii)  for each $B\in \mathcal{F}$, $p(\omega,B)=\mathbb{P}(B|\mathcal{G})(\omega)=\mathbb{E}(1_B|\mathcal{G})(\omega),\ \mathbb{P}${\rm-a.s.} 

\ss

We write $\mathbb{P}(\cdot|\mathcal{G})(\omega)$ for $p(\omega,\cdot)$.
\end{lemma}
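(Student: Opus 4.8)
This is the classical existence theorem for regular conditional probabilities; the plan is to reduce it to a one-dimensional construction and then verify the three defining properties by a monotone class argument. The one structural input needed is that $(\Omega,\mathcal{F})$ may be taken to be a standard Borel space, which is the case here since one may realize $W(\cdot)$ on its canonical path space (a Polish space, e.g.\ $C([0,T];\tilde H_0)$ for a Hilbert--Schmidt extension $\tilde H_0\supset\tilde H$) before passing to the $\mathbb{P}$-completion; the completion only enlarges $\mathcal{F}$ by null sets and is harmless below.

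\textbf{Step 1 (reduction to $\mathbb{R}$).} By the Borel isomorphism theorem there is a bimeasurable bijection $\psi\colon\Omega\to\Lambda$ with $\Lambda\in\mathcal{B}(\mathbb{R})$. Hence it suffices to build a \emph{regular conditional distribution} for the real random variable $\xi\deq\psi$: a map $q\colon\Omega\times\mathcal{B}(\mathbb{R})\to[0,1]$ with $q(\omega,\cdot)$ a probability measure, $q(\cdot,B)$ $\mathcal{G}$-measurable, and $q(\omega,B)=\mathbb{P}(\xi\in B\mid\mathcal{G})(\omega)$ $\mathbb{P}$-a.s. Then $p(\omega,A)\deq q(\omega,\psi(A))$ is the desired object.

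\textbf{Step 2 (one-dimensional construction).} For each rational $r$ fix a version $G_r$ of $\mathbb{P}(\xi\le r\mid\mathcal{G})$. Each of the properties ``$r\mapsto G_r(\omega)$ is nondecreasing'', ``$G_r(\omega)=\inf_{s>r,\,s\in\mathbb{Q}}G_s(\omega)$'', ``$\inf_{r}G_r(\omega)=0$'' and ``$\sup_{r}G_r(\omega)=1$'' fails only on a $\mathbb{P}$-null set in $\mathcal{G}$; their union is a single null set $N$, off which $\{G_r(\omega)\}_{r\in\mathbb{Q}}$ is the restriction to $\mathbb{Q}$ of a genuine distribution function. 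On $N$ set $q(\omega,\cdot)$ equal to a fixed probability measure $\nu_0$; off $N$ let $q(\omega,\cdot)$ be the Lebesgue--Stieltjes measure of $x\mapsto\inf_{r>x,\,r\in\mathbb{Q}}G_r(\omega)$, which is a bona fide distribution function for every such $\omega$ and is $\mathcal{G}$-measurable in $\omega$ for each $x$. A further null set lets one arrange $q(\omega,\Lambda)=1$ for all $\omega$.

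\textbf{Step 3 (verification and the main obstacle).} Property (i) holds by construction. For (ii), the collection of $B\in\mathcal{B}(\mathbb{R})$ with $q(\cdot,B)$ $\mathcal{G}$-measurable contains the half-lines $(-\infty,x]$ and is a $\lambda$-system, hence is all of $\mathcal{B}(\mathbb{R})$. For (iii), the collection of $B$ with $q(\cdot,B)=\mathbb{P}(\xi\in B\mid\mathcal{G})$ $\mathbb{P}$-a.s.\ contains the $\pi$-system $\{(-\infty,r]:r\in\mathbb{Q}\}$ by the choice of $G_r$ and is closed under complementation and countable disjoint unions (using countable additivity of both sides and that a countable union of null sets is null), so Dynkin's theorem finishes it; transporting through $\psi$ yields the stated $p$. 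The only genuinely delicate point, and the reason the standard-Borel hypothesis is essential, is upgrading ``$q(\omega,\cdot)$ is a probability measure for a.e.\ $\omega$'' to ``for \emph{every} $\omega$'', which the $\mathbb{R}$-valued reformulation makes possible precisely because only countably many conditions defining a distribution function must be secured, so they can be pinned down off a single null set and then repaired everywhere.
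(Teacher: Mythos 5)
The paper offers no proof of Lemma \ref{lm71}: it is stated purely as a recollection of a classical fact (``let us recall the concept of regular conditional probability'') and the text moves on immediately, so there is nothing to compare your argument against line by line. Your construction is the standard textbook one --- Borel-isomorphism reduction to a Borel subset of $\mathbb{R}$, a conditional distribution function built from countably many rational versions $G_r$ of $\mathbb{P}(\xi\le r\,|\,\mathcal{G})$ repaired off a single null set, and a $\pi$--$\lambda$ argument for (ii) and (iii) --- and it is correct. You also supply something the paper leaves implicit: as literally stated, for an arbitrary complete probability space and arbitrary sub-$\sigma$-algebra the lemma is false, and existence genuinely requires $(\Omega,\mathcal{F})$ to be (essentially) standard Borel; your observation that one may realize $W(\cdot)$ on its canonical path space is exactly the hypothesis under which the statement holds and under which the paper tacitly operates. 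The one place where your write-up is lighter than it should be is the remark that passing to the $\mathbb{P}$-completion is ``harmless'': extending $p(\omega,\cdot)$ from the Borel $\sigma$-algebra to its completion by setting $p(\omega,A)=p(\omega,A')$ for a Borel $A'$ with $\mathbb{P}(A\triangle A')=0$ is only well defined off an $\omega$-null set that depends on the choice of $A$, so property (i) for \emph{every} $\omega$ on the full completed $\mathcal{F}$ requires either restricting to the Borel $\sigma$-algebra or completing each measure $p(\omega,\cdot)$ separately; since the paper only ever evaluates $p(\omega,\cdot)$ on events generated by the solution processes, this is a cosmetic rather than substantive issue, but it deserves a sentence rather than a dismissal.
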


\subsection{A family of auxillary optimal control problems}\label{scp}

In this subsection, we introduce a family of auxillary optimal control problems, which plays a key role in establishing the DPP for \textbf{Problem (OP)}.

For any given initial data $(t,\xi) \in [0,T]\times L^2_{\mathcal{F}_t}(\Omega; H)$ and control process $u(\cdot)\in\mathcal{U}[t,T]$, we consider the backward stochastic evolution equation (BSEE): \vspace{-2mm}
\begin{equation}\label{bsystem1}
Y(s)=h(X(T)) +\int_s^T f(\tau,X(\tau),u(\tau))d\tau -\int_s^T Z(\tau)dW(\tau), \quad
t\le s\le T,\vspace{-2mm}
\end{equation}
where $X(\cdot)$ is the solution to the forward system \eqref{system2} with the initial data $X(t)=\xi$ and $u(\cdot)$ is the corresponding control.

Under Assumption {\bf (S2)}, the classical well-posedness theory for BSEEs  guarantees that equation \eqref{bsystem1} admits a unique solution pair $(Y,Z) \in L^2_{\mathbb{F}}(\Omega;C([0,T];\mathbb{R})) \times L^{2}_{\mathbb{F}}(0,T;\widetilde{H})$, as established in \cite[Section 4.2]{Lu2021}. To explicitly track the dependence on the initial state $\xi$ and control process $u$, we adopt the notation $(Y(\cdot;t,\xi,u), Z(\cdot;t,\xi,u))$.

Furthermore, the solution to BSEE \eqref{bsystem1} satisfies the following  properties: 
\begin{lemma}\label{lem_bsde}
Let Assumptions  {\bf (S1)}--{\bf (S2)} hold. For any $s\in [t,T]$, the solution
$(Y(\cd;t,\xi,u),Z(\cd;t,\xi,u))$ of BSEE \eqref{bsystem1} satisfies\vspace{-3mm}
$$
|Y (s)|^2 \le \cC\mathbb E\(|h(X (T))|^2+\int_s^T|f(\tau,X
(\tau),u(\tau))|^2d\tau \Big|\mathcal F_s\)\vspace{-2mm}
$$
and\vspace{-2mm}
$$
|Y(s)|\le \cC\big(1+|\xi|_H^2\big).\vspace{-2mm}
$$
\end{lemma}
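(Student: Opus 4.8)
The plan is to derive both bounds directly from the standard a priori estimate for BSEEs combined with the growth conditions in {\bf (S2)} and the moment estimate \eqref{eq:3.15-1}. First I would invoke the well-posedness and a priori estimate theory for BSEEs from \cite[Section 4.2]{Lu2021}: applying the conditional (at time $s$) version of the energy estimate to \eqref{bsystem1}, one gets $|Y(s)|^2 \le \cC\,\mathbb{E}\big(|h(X(T))|^2 + \int_s^T |f(\tau,X(\tau),u(\tau))|^2 d\tau \,\big|\, \mathcal{F}_s\big)$, which is precisely the first claimed inequality. The only subtlety here is that $Y$ is scalar-valued and $Y(s)$ is $\mathcal{F}_s$-measurable, so applying $\mathbb{E}(\cdot\,|\,\mathcal{F}_s)$ to the integral form of \eqref{bsystem1} kills the martingale term $\int_s^T Z\,dW$ and yields the representation $Y(s) = \mathbb{E}\big(h(X(T)) + \int_s^T f(\tau,X(\tau),u(\tau))\,d\tau \,\big|\, \mathcal{F}_s\big)$; the quadratic estimate then follows either from this representation plus Jensen/Cauchy--Schwarz or from the cited a priori bound.

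Next, for the second inequality I would feed the growth bounds of {\bf (S2)}iii) into the first estimate. We have $|h(X(T))|^2 \le \cC(1+|X(T)|_H^2+|u(T)|_U^2)^2 \le \cC(1+|X(T)|_H^4+U_m^4)$ and similarly $|f(\tau,X(\tau),u(\tau))|^2 \le \cC(1+|X(\tau)|_H^4+U_m^4)$, using that $U$ is bounded with $|u|_U \le U_m$. Substituting into the first bound and taking the conditional expectation,
$$
|Y(s)|^2 \le \cC\,\mathbb{E}\Big(1 + U_m^4 + \sup_{t\le r\le T}|X(r)|_H^4 \,\Big|\, \mathcal{F}_s\Big).
$$
Here I would use the tower property $\mathbb{E}(\cdot\,|\,\mathcal{F}_s) = \mathbb{E}\big(\mathbb{E}(\cdot\,|\,\mathcal{F}_t)\,\big|\,\mathcal{F}_s\big)$ together with Lemma \ref{lemma_esti_sde}, specifically \eqref{eq:3.15-1} with $p=4$, to get $\mathbb{E}\big(\sup_{t\le r\le T}|X(r)|_H^4\,\big|\,\mathcal{F}_t\big) \le \cC(1+|\xi|_H^4)$, hence $|Y(s)|^2 \le \cC(1+|\xi|_H^4)$, and taking square roots gives $|Y(s)| \le \cC(1+|\xi|_H^2)$ as stated. (One minor point: for $(t,\xi)$ in the $L^2$-space one interprets the bound pathwise where finite; the estimate is used in the regime where higher moments of $\xi$ are available, consistent with the hypotheses under which \eqref{eq:3.15-1} was stated for $p\ge 2$.)

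The main obstacle I anticipate is not conceptual but bookkeeping: matching the exponents correctly, since the quadratic growth of $f$ and $h$ in $|X|_H$ forces fourth moments of the state, so one must cite \eqref{eq:3.15-1} at $p=4$ rather than $p=2$, and one must be careful that the BSEE a priori estimate from \cite{Lu2021} is available in the \emph{conditional} form at a general time $s$ (not merely at $s=t$). If only the unconditional estimate is cited there, I would instead derive the conditional estimate by applying It\^o's formula to $|Y(\cdot)|^2$ on $[s,T]$, using $2Y\,f \le |Y|^2 + |f|^2$ and Gronwall's inequality, then taking $\mathbb{E}(\cdot\,|\,\mathcal{F}_s)$ and discarding the nonnegative $\int |Z|^2$ term — this is routine and self-contained. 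Everything else is a direct substitution of {\bf (S2)} and Lemma \ref{lemma_esti_sde}.
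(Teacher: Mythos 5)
Your overall route is the same as the paper's: the paper's proof of this lemma is just two sentences (``energy estimates'' for the first inequality; ``growth conditions in {\bf (S2)} plus Lemma~\ref{lemma_esti_sde}'' for the second), and your proposal fills in exactly those two steps, including the correct observation that the quadratic growth of $f,h$ forces you to invoke \eqref{eq:3.15-1} at $p=4$ rather than $p=2$. The first inequality, and the case $s=t$ of the second inequality, are handled correctly.

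However, there is a genuine flaw in your treatment of the second inequality for $s>t$. The identity you invoke, $\mathbb{E}(\cdot\,|\,\mathcal{F}_s)=\mathbb{E}\big(\mathbb{E}(\cdot\,|\,\mathcal{F}_t)\,\big|\,\mathcal{F}_s\big)$, is backwards: since $\mathcal{F}_t\subset\mathcal{F}_s$, the right-hand side equals $\mathbb{E}(\cdot\,|\,\mathcal{F}_t)$, not $\mathbb{E}(\cdot\,|\,\mathcal{F}_s)$. The correct tower property, $\mathbb{E}(\cdot\,|\,\mathcal{F}_t)=\mathbb{E}\big(\mathbb{E}(\cdot\,|\,\mathcal{F}_s)\,\big|\,\mathcal{F}_t\big)$, only tells you that the $\mathcal{F}_t$-average of $\mathbb{E}\big(\sup_{t\le r\le T}|X(r)|_H^4\,\big|\,\mathcal{F}_s\big)$ is at most $\cC(1+|\xi|_H^4)$; it does not give the pointwise a.s.\ bound on the $\mathcal{F}_s$-conditional moment that your chain of inequalities needs. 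Indeed the pointwise bound genuinely fails for $s>t$: take $H=\mathbb{R}$, $a=f=0$, $b=1$, $h(x)=x^2$; then $Y(s)=(\xi+W(s)-W(t))^2+(T-s)$, which is not dominated by $\cC(1+|\xi|^2)$ almost surely once $s>t$. So your argument (and, to be fair, the paper's one-line proof, which is silent on this point) only establishes the second estimate at $s=t$ --- which is the only case used downstream, in Lemma~\ref{lem:3.6}, where $\cJ(t,\xi;u)=Y(t)$. If you want a correct statement for all $s$, you should replace the right-hand side by $\cC\big(1+|X(s)|_H^2\big)$ (apply \eqref{eq:3.15-1} on $[s,T]$ with initial datum $X(s)$, using the flow property), or else restrict the claim to $s=t$.
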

\begin{proof} 
The first estimate follows from direct application of energy estimates to \eqref{bsystem1}.  
The second one is obtained by combining the growth conditions in Assumption {\bf (S2)} and Lemma
\ref{lemma_esti_sde}.
\end{proof}

For any given initial condition $(t,\xi) \in [0,T]\times L^2_{\mathcal{F}_t}(\Omega; H)$ and control process $u(\cdot)\in\mathcal{U}[t,T]$, we consider the control  system \eqref{system2} with the associated cost functional:
\begin{eqnarray}\label{eq:b12}
\cJ(t,\xi; u(\cdot))=Y(t;t,\xi,u),
\end{eqnarray}
where $(Y(\cdot;t,\xi,u),Z(\cdot;t,\xi,u))$ solves the BSEE \eqref{bsystem1}.

The optimal control problem parameterized
by $(t,\xi)\in [0,T]\times  L^2_{\cF_t}(\Omega; H)$ is formulated as follows:

\ss

{\bf Problem (OP)$_{t,\xi}$} 
Find an admissible control process ${\bar u} (\cdot) \in \cU[t, T]$ such that
\begin{eqnarray}\label{eq:3.3}
\cJ (t, \xi; {\bar u}(\cdot) ) = \essinf_{u (\cdot) \in
{\cU[t, T]}} \cJ (t,\xi; u (\cdot) ).
\end{eqnarray}

A control $\bar{u}(\cdot)\in\cU[t, T]$ satisfying \eqref{eq:3.3} is called an \textit{optimal control} for {\bf Problem (OP)}$_{t,\xi}$. The corresponding state process $\overline{X}(\cdot)$ is called the \textit{optimal state process}. The pair $(\overline{X}(\cdot),\bar{u}(\cdot))$ is called an \textit{optimal pair}.

Clearly, when $t=0$ and $\xi=X_0$, {\bf Problem (OP)$_{0,X_0}$}  reduces to our original {\bf Problem (OP)}.

For each pair $(t,\xi) \in [0,T] \times L^2_{\mathcal{F}_t}(\Omega; H)$, we define the \textit{value mapping}:\vspace{-2mm}
\begin{eqnarray}\label{eq-value1}
\mathbb V(t, \xi) \triangleq \mathop{\textrm{ess}\inf}_{u (\cdot)
\in \cU[t, T]} \cJ (t,\xi; u(\cdot) ) .
\end{eqnarray}
By Lemma \ref{lemma1} and noting that the cost functional $\mathcal{J}(t,\xi;u(\cdot))$ is $\mathcal{F}_t$-measurable for each $u(\cdot) \in \mathcal{U}[t,T]$, for any $(t, \xi) \in [0,T]\times  L^2_{\cF_t}(\Omega; H)$,
$\mathbb V(t, \xi)$ is an $\cF_t$-measurable random variable.

With the help of Lemmas \ref{lemma_esti_sde} and \ref{lem_bsde},   we immediately get the following result.
\begin{lemma}\label{lem:3.6}
Let Conditions {\bf (S1)}--{\bf (S2)} hold. For $t \in [0,T)$, $u(\cdot), \tilde u(\cdot)\in \cU[t, T]$, and $\xi, \tilde \xi \in L^2_{\cF_t}(\Omega; H)$, \vspace{-2mm} 
\begin{equation*} 
|\cJ(t, \xi; u)|\leq \cC(1+|\xi|_H^2),
\end{equation*}
\begin{eqnarray*}
\begin{array}{ll}\ds
|\cJ(t,\xi; u)-\cJ(t,\tilde\xi; \tilde u)|\\
\ns\ds \le \cC \bigg\{\big(1+|\xi|_H+|\tilde
\xi|_H\big)|\xi-\tilde\xi|_H+\mathbb
E\bigg[\int_t^T\big(1+|u(s)|_{U}+|\tilde u(s)|_{U}\big)|u(s)-\tilde u(s)|_{U}ds\bigg| \cF_t\bigg]\bigg\},
\end{array}
\end{eqnarray*}
\begin{equation*}
|\mathbb V(\tau, \xi)|\leq \cC\big(1+|\xi|_H^2\big),\vspace{-2mm}
\end{equation*}
and\vspace{-2mm}
\begin{eqnarray}\label{eq:3.16}
|\mathbb V(\tau,\xi)-\mathbb V(\tau,\tilde \xi)|\leq
\cC\big(1+|\xi|_H+|\tilde \xi|_H\big)\big|\xi-\tilde\xi\big|_H.
\end{eqnarray}
\end{lemma}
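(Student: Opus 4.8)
\textbf{Proof strategy for Lemma \ref{lem:3.6}.}

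The plan is to derive the four estimates sequentially, propagating the bounds for the state and BSEE solutions established in Lemmas \ref{lemma_esti_sde} and \ref{lem_bsde} through the definition $\cJ(t,\xi;u)=Y(t;t,\xi,u)$ and then through the essential infimum. First I would obtain $|\cJ(t,\xi;u)|\le \cC(1+|\xi|_H^2)$ by directly invoking the second estimate in Lemma \ref{lem_bsde}, since $\cJ(t,\xi;u)=Y(t;t,\xi,u)$ and that estimate is uniform in $u(\cdot)\in\cU[t,T]$.

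For the second (stability) estimate, I would write down the BSEE satisfied by the difference $\delta Y := Y(\cdot;t,\xi,u)-Y(\cdot;t,\tilde\xi,\tilde u)$, whose terminal datum is $h(X(T))-h(\widetilde X(T))$ and whose generator contribution is $f(\tau,X(\tau),u(\tau))-f(\tau,\widetilde X(\tau),\tilde u(\tau))$. Applying the first estimate of Lemma \ref{lem_bsde} to this difference equation (a linear BSEE with zero generator after moving the inhomogeneity to the driver), one gets
$$
|\delta Y(t)|^2\le \cC\,\mE\Big(|h(X(T))-h(\widetilde X(T))|^2+\int_t^T|f(\tau,X(\tau),u(\tau))-f(\tau,\widetilde X(\tau),\tilde u(\tau))|^2 d\tau\,\Big|\cF_t\Big).
$$
Then I would insert the local Lipschitz bounds from Assumption {\bf (S2)iii)}, apply the Cauchy–Schwarz inequality in $\omega$ (conditionally on $\cF_t$) to split the products $(1+|X|_H+|\widetilde X|_H+\cdots)(|X-\widetilde X|_H+|u-\tilde u|_U)$, and control the $X$-differences via \eqref{eq:3.15-2} and the $X$-moments via \eqref{eq:3.15-1} of Lemma \ref{lemma_esti_sde}. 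Taking square roots and absorbing constants gives the stated bound; one should be a little careful to keep everything inside conditional expectations given $\cF_t$ and to use that $\xi,\tilde\xi$ are $\cF_t$-measurable so they factor out.

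The third estimate $|\dbV(\tau,\xi)|\le\cC(1+|\xi|_H^2)$ is immediate: $\dbV(\tau,\xi)=\essinf_u\cJ(\tau,\xi;u)$ lies between $-\cC(1+|\xi|_H^2)$ and $\cC(1+|\xi|_H^2)$ because every $\cJ(\tau,\xi;u)$ does, and the essential infimum of a family bounded in this way inherits the bound (using Corollary \ref{cor1}). For the last estimate \eqref{eq:3.16}, the key observation is that for $\xi,\tilde\xi\in L^2_{\cF_t}(\Omega;H)$ the \emph{same} admissible control set $\cU[\tau,T]$ is used, so from the stability estimate with $u=\tilde u$ one gets $|\cJ(\tau,\xi;u)-\cJ(\tau,\tilde\xi;u)|\le \cC(1+|\xi|_H+|\tilde\xi|_H)|\xi-\tilde\xi|_H$ \emph{uniformly} in $u$; applying $\essinf$ and the elementary inequality $|\essinf_u a(u)-\essinf_u b(u)|\le \esssup_u|a(u)-b(u)|$ (valid $\dbP$-a.s.) yields \eqref{eq:3.16}.

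The main obstacle is the bookkeeping in the second estimate: one must justify that the BSEE energy estimate of Lemma \ref{lem_bsde} applies to the difference equation (it does, since that lemma's first bound holds for the linear BSEE with the inhomogeneous term treated as the generator), and one must handle the cross terms $1+|u(s)|_U+|\tilde u(s)|_U$ carefully — here the boundedness of $U$ (so $|u|_U\le U_m$) keeps the second stability estimate in the clean form stated, while retaining the $(1+|u|_U+|\tilde u|_U)$ factor as written is also harmless. Everything else is a routine application of conditional Cauchy–Schwarz together with Lemmas \ref{lemma_esti_sde}–\ref{lem_bsde}.
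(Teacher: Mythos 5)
Your proposal is correct and follows exactly the route the paper intends: the paper gives no written proof of Lemma~\ref{lem:3.6}, stating only that it follows ``immediately'' from Lemmas~\ref{lemma_esti_sde} and~\ref{lem_bsde}, and your combination of the BSEE energy/growth bounds with the conditional state estimates, conditional Cauchy--Schwarz, and the uniform-in-$u$ passage to the essential infimum is precisely that argument. The only (cosmetic) caveat is in the second estimate: propagating \eqref{eq:3.15-2} through Cauchy--Schwarz naturally produces the control contribution in the form $\big(\mE[\int_t^T|u(s)-\tilde u(s)|_U^2\,ds\,|\,\cF_t]\big)^{1/2}$ rather than linearly inside the conditional expectation as the lemma states, a discrepancy inherited from the paper's own formulation and immaterial for the only downstream use \eqref{eq:3.16}, where $u=\tilde u$.
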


\subsection{Dynamic Programming Principle}

In this subsection, we establish the dynamic programming principle for {\bf Problem (OP)}.

Given initial data $(t,\xi) \in [0,T] \times L^2_{\mathcal{F}_t}(\Omega;H)$,  a control $u(\cdot) \in \mathcal{U}[t,r]$, and a terminal condition $\eta \in L^2_{\mathcal{F}_r}(\Omega;\mathbb{R})$, we define the operator \vspace{-4mm}
$$
G^{t,\xi;u}_{s,r}[\eta]\=\widetilde Y (s), \quad s\in [t,r],\vspace{-2mm}
$$
where $(\widetilde X,\widetilde Y,\widetilde Z)$ is the solution to
the following forward-backward system:\vspace{-2mm}
\begin{equation}\label{3.28-eq5}
\left\{
\begin{array}{ll}\ds
d\wt X(s) = \big(A\wt{X}(s) + a(s,\wt X(s),u(s))\big)ds+b(s,\wt X(s),u(s))d W(s) & \mbox{in }(t,r], \\
\ns\ds d\wt Y(s)= -f(s,\wt X (s),u(s))ds+\wt Z(s)dW(s) & \mbox{in }[t,r),\\
\ns\ds \wt X(t) =  \xi, \quad \wt Y(r)=\eta.
\end{array}
\right.\vspace{-2mm}
\end{equation}

We have the following result.

\begin{theorem}\label{thm_dpp_1}
Under Assumptions {\bf (S1)}--{\bf (S2)}, the value mapping $\mathbb{V}$
satisfies the following dynamic programming principle 
for any $t \in [0,T]$, $r \in [t,T]$, and $\xi \in
L^2_{\cF_t}(\Omega;H)$:\vspace{-2mm}
\begin{eqnarray}\label{eq-value2}
\mathbb V(t,\xi)= \essinf_{u(\cdot)\in\cU[t, r]}
G^{t,\xi;u}_{t,r}[\mathbb V(r,X(r;t,\xi,u(\cdot)))].
\end{eqnarray}
\end{theorem}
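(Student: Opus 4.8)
The plan is to prove the two inequalities
$\mathbb V(t,\xi)\le \essinf_{u(\cdot)\in\cU[t,r]}G^{t,\xi;u}_{t,r}[\mathbb V(r,X(r;t,\xi,u))]$
and the reverse one separately, relying on the flow property of the state equation, the time-consistency (semigroup/tower) property of the map $G$ built from the BSEE, and the regular conditional probability of Lemma \ref{lm71} to pass between the filtration $\mathcal F_t$ and $\mathcal F_r$. The key structural fact I would isolate first is a \emph{cocycle identity} for $G$: for $t\le s\le r\le \sigma$, a control $u(\cdot)\in\cU[t,\sigma]$, and $\eta\in L^2_{\cF_\sigma}(\Omega;\dbR)$, uniqueness of the mild/transposition solution of the forward equation \eqref{system2} (from \cite[Theorem 3.21]{Lu2021}) gives $X(\cdot;t,\xi,u)=X(\cdot;s,X(s;t,\xi,u),u)$ on $[s,\sigma]$, and uniqueness for the BSEE \eqref{bsystem1} then yields $G^{t,\xi;u}_{t,\sigma}[\eta]=G^{t,\xi;u}_{t,s}\big[G^{s,X(s;t,\xi,u);u}_{s,\sigma}[\eta]\big]$, together with the monotonicity $G^{t,\xi;u}_{t,s}[\eta_1]\le G^{t,\xi;u}_{t,s}[\eta_2]$ whenever $\eta_1\le\eta_2$ a.s. (comparison for linear-growth BSEEs). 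I would also record that $\cJ(t,\xi;u)=G^{t,\xi;u}_{t,T}[h(X(T;t,\xi,u))]$, which identifies the cost with a one-step application of $G$ over $[t,T]$.

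For the inequality ``$\le$'': fix $u(\cdot)\in\cU[t,r]$ and, for $\e>0$, choose by Lemma \ref{lemma1}/Corollary \ref{cor1} a near-optimal control $v^\e(\cdot)\in\cU[r,T]$ for $\textbf{Problem (OP)}_{r,X(r;t,\xi,u)}$, i.e. $\cJ(r,X(r);v^\e)\le \mathbb V(r,X(r))+\e$ a.s.; a measurable-selection/pasting argument (using that $\cU[r,T]$ is stable under pairwise pasting along $\cF_r$-sets, which is needed to make $v^\e$ genuinely admissible) is required here. Concatenate $u$ on $[t,r]$ with $v^\e$ on $[r,T]$ to get $\tilde u\in\cU[t,T]$. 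Then by the cocycle identity and monotonicity of $G$, $\mathbb V(t,\xi)\le \cJ(t,\xi;\tilde u)=G^{t,\xi;u}_{t,r}[\cJ(r,X(r);v^\e)]\le G^{t,\xi;u}_{t,r}[\mathbb V(r,X(r))+\e]$; letting $\e\downarrow0$ (using continuity of $G$ in its terminal datum, which follows from the BSEE estimates of Lemma \ref{lem_bsde}) and then taking the essential infimum over $u(\cdot)\in\cU[t,r]$ gives the bound.

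For the reverse inequality ``$\ge$'': take any $u(\cdot)\in\cU[t,T]$; its restriction to $[t,r]$ lies in $\cU[t,r]$ and its restriction to $[r,T]$ is admissible for $\textbf{Problem (OP)}_{r,X(r;t,\xi,u)}$, so $\cJ(r,X(r);u|_{[r,T]})\ge \mathbb V(r,X(r))$ a.s. By the cocycle identity and monotonicity, $\cJ(t,\xi;u)=G^{t,\xi;u}_{t,r}[\cJ(r,X(r);u|_{[r,T]})]\ge G^{t,\xi;u}_{t,r}[\mathbb V(r,X(r))]\ge \essinf_{v(\cdot)\in\cU[t,r]}G^{t,\xi;v}_{t,r}[\mathbb V(r,X(r;t,\xi,v))]$; taking $\essinf$ over $u(\cdot)\in\cU[t,T]$ on the left yields $\mathbb V(t,\xi)\ge$ the right-hand side. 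The main obstacle is the measurable pasting/selection step in the ``$\le$'' direction: producing an $\cF_r$-measurably chosen family of $\e$-optimal controls for the continuum of initial conditions $X(r;t,\xi,u)(\omega)$ and verifying the concatenated process is $\mathbf F$-adapted in the non-Markovian setting. This is where Lemma \ref{lm71} and the stability of $\mathscr X=\{\cJ(r,\zeta;v):v\in\cU[r,T]\}$ under pairwise minimum (to invoke the decreasing-sequence part of Lemma \ref{lemma1}) are essential, exactly as in \cite[Chapter 2]{Peng1997} and \cite{ChenLu}; the infinite-dimensional state space only enters through the already-established estimates of Lemmas \ref{lemma_esti_sde}--\ref{lem:3.6}, so no new difficulty arises there.
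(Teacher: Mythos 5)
Your proposal is correct and follows essentially the same route as the paper: both directions rest on the cocycle/decomposition identity for $G$ obtained from uniqueness of the forward--backward system, monotonicity of $G$, and the construction of an $\varepsilon$-optimal control on $[r,T]$ (the paper's Lemma \ref{lem_approximate_optimal}, built from closedness under pairwise minimization) concatenated with $u$ on $[t,r]$. The measurable-pasting issue you flag is resolved exactly as you suggest: since $\mathbb{V}(r,\cdot)$ is defined directly on random initial data in $L^2_{\cF_r}(\Omega;H)$, a single $\varepsilon$-optimal control for the random initial state $X(r;t,\xi,u)$ suffices, so no selection over a continuum of deterministic states (and no use of Lemma \ref{lm71}) is needed here.
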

\begin{remark}
As the classical Bellman's dynamic programming principle, Theorem \ref{thm_dpp_1} reveals that the optimal cost from time $t$ can be obtained by considering: first, the cost of running the system with any admissible control up to an intermediate time $r$; and second, the optimal cost from the resulting state at time $r$. The essential infimum over all admissible controls on $[t,r]$ captures this two-stage optimization.
\end{remark}

Before proceeding with the proof of Theorem \ref{thm_dpp_1}, we establish several key technical results.

\begin{lemma}\label{lemma2}
Under Assumptions {\bf (S1)}--{\bf (S2)}, for any initial data $(t, \xi) \in [0,T] \times L^2_{\mathcal{F}_t}(\Omega;H)$, the set\vspace{-2mm}
\begin{equation*}
\big\{ \mathcal{J}(t, \xi; u(\cdot)) \big| u(\cdot)\in \mathcal{U}[t, T]\big\}\vspace{-2mm}
\end{equation*}
is closed under pairwise minimization. That is, for any two admissible controls, there exists another admissible control whose cost equals the pointwise minimum of the original costs.
\end{lemma}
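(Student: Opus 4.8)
The plan is to show that the family of costs is closed under pairwise minimization by constructing, from any two admissible controls $u^1(\cdot), u^2(\cdot) \in \mathcal{U}[t,T]$, a third admissible control $u(\cdot)$ that ``switches'' between $u^1$ and $u^2$ on a set that is determined by comparing the two costs, and then verifying that $\mathcal{J}(t,\xi;u(\cdot)) = \mathcal{J}(t,\xi;u^1(\cdot)) \wedge \mathcal{J}(t,\xi;u^2(\cdot))$ $\mathbb{P}$-a.s. The natural candidate is the event $\Lambda \deq \{\omega \in \Omega : \mathcal{J}(t,\xi;u^1(\cdot))(\omega) \le \mathcal{J}(t,\xi;u^2(\cdot))(\omega)\}$, which lies in $\mathcal{F}_t$ because both costs are $\mathcal{F}_t$-measurable (as recorded after \eqref{eq-value1}); then set
$$
u(s,\omega) \deq \mathbf{1}_\Lambda(\omega)\, u^1(s,\omega) + \mathbf{1}_{\Omega\setminus\Lambda}(\omega)\, u^2(s,\omega), \qquad s \in [t,T].
$$
Since $\Lambda \in \mathcal{F}_t \subset \mathcal{F}_s$ for all $s \ge t$, the process $u(\cdot)$ is $\mathbf{F}$-adapted and $U$-valued, hence $u(\cdot) \in \mathcal{U}[t,T]$.

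First I would establish the pathwise decomposition of the state process: letting $X^i(\cdot) = X(\cdot;t,\xi,u^i)$ for $i=1,2$ and $X(\cdot) = X(\cdot;t,\xi,u)$, I claim $X(\cdot) = \mathbf{1}_\Lambda X^1(\cdot) + \mathbf{1}_{\Omega\setminus\Lambda} X^2(\cdot)$. This follows because $\mathbf{1}_\Lambda X^1 + \mathbf{1}_{\Omega\setminus\Lambda} X^2$ solves the mild form of \eqref{system2} with control $u(\cdot)$ — one multiplies the mild solution identities for $X^1$ and $X^2$ by the $\mathcal{F}_t$-measurable indicators, which commute with the stochastic and Bochner integrals over $[t,\cdot]$ (this uses that $\mathbf{1}_\Lambda$ is $\mathcal{F}_t$-measurable, so it can be pulled inside $\int_t^s S(s-r)(\cdot)\,dW(r)$), and then invokes uniqueness of the mild solution from \cite[Theorem 3.21]{Lu2021}. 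The same indicator-splitting argument applied to the BSEE \eqref{bsystem1} gives $Y(\cdot;t,\xi,u) = \mathbf{1}_\Lambda Y(\cdot;t,\xi,u^1) + \mathbf{1}_{\Omega\setminus\Lambda} Y(\cdot;t,\xi,u^2)$, again by uniqueness of the solution pair $(Y,Z)$ from \cite[Section 4.2]{Lu2021}; here one checks that the right-hand side solves the BSEE driven by $f(\cdot, X(\cdot), u(\cdot))$ with terminal data $h(X(T))$, using the state decomposition just established together with the fact that $f$ and $h$ evaluated along $\mathbf{1}_\Lambda X^1 + \mathbf{1}_{\Omega\setminus\Lambda} X^2$ decompose as $\mathbf{1}_\Lambda f(\cdot,X^1,u^1) + \mathbf{1}_{\Omega\setminus\Lambda} f(\cdot,X^2,u^2)$, etc.

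Evaluating at $s=t$ and recalling $\mathcal{J}(t,\xi;v) = Y(t;t,\xi,v)$ from \eqref{eq:b12}, I then get
$$
\mathcal{J}(t,\xi;u(\cdot)) = \mathbf{1}_\Lambda\, \mathcal{J}(t,\xi;u^1(\cdot)) + \mathbf{1}_{\Omega\setminus\Lambda}\, \mathcal{J}(t,\xi;u^2(\cdot)) = \mathcal{J}(t,\xi;u^1(\cdot)) \wedge \mathcal{J}(t,\xi;u^2(\cdot)),
$$
the last equality by the definition of $\Lambda$. Since the family $\{\mathcal{J}(t,\xi;u(\cdot))\}$ is uniformly bounded below (by Lemma \ref{lem:3.6}, $|\mathcal{J}(t,\xi;u)| \le \mathcal{C}(1+|\xi|_H^2)$, so Corollary \ref{cor1} applies if needed), this completes the proof. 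The main obstacle is the careful justification that the $\mathcal{F}_t$-measurable indicator functions pass through the stochastic integrals in the mild-solution representation, so that the patched process genuinely solves the patched equation; once that commutation is in place, the uniqueness theorems do the rest and everything else is bookkeeping.
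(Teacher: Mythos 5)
Your proposal is correct and follows essentially the same route as the paper: define the $\mathcal{F}_t$-measurable comparison set, patch the two controls by indicators, and invoke uniqueness of the forward and backward equations to split the cost. The commutation of $\mathcal{F}_t$-measurable indicators with the integrals that you spell out is exactly the content of the paper's later Lemma \ref{lm3.1}, which the paper's own proof of Lemma \ref{lemma2} uses implicitly via ``uniqueness of solutions to BSEE \eqref{bsystem1}.''
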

\begin{proof} 
Consider any pair of admissible controls $u_1(\cdot), u_2(\cdot)\in \mathcal{U}[t, T]$. Define the measurable set\vspace{-2mm}
$$
\Upsilon\triangleq \big\{\omega\in\Omega \big|\cJ(t, \xi;
u_1(\cdot)) \leq \cJ(t, \xi; u_2(\cdot))\big\}.\vspace{-2mm}
$$
Since both $\mathcal{J}(t, \xi; u_1(\cdot))$ and $\mathcal{J}(t, \xi; u_2(\cdot))$ are $\mathcal{F}_t$-measurable, we have $\Upsilon\in \mathcal{F}_t$.

Now construct the switching control\vspace{-2mm}
\begin{equation*}
v(\cdot) \triangleq u_1(\cdot)\chi_{\Upsilon} + u_2(\cdot)\chi_{\Omega\setminus\Upsilon},\vspace{-2mm}
\end{equation*}
which remains admissible in $\mathcal{U}[t, T]$. By the uniqueness of solutions to BSEE \eqref{bsystem1}, we obtain that\vspace{-2mm}
\begin{eqnarray}
\begin{array}{ll}\ds
\cJ(t, \xi; v(\cdot))\3n&\ds= \cJ(t, \xi;
u_1(\cdot)\chi_\Upsilon+u_2(\cdot)\chi_{\Om\setminus\Upsilon})
\\ \ns&\ds=
\cJ(t, \xi; u_1(\cdot))\chi_\Upsilon+\cJ(t, \xi;
u_2(\cdot))\chi_{\Om\setminus\Upsilon}
\\ \ns&\ds= \cJ(t, \xi; u_1(\cdot))\wedge \cJ(t, \xi; u_2(\cdot)).
\end{array}\vspace{-1mm}
\end{eqnarray}
This establishes that the cost functional values are closed under pairwise minimization. 
\end{proof} 

As an immediate corollary of Corollary \ref{cor1} and Lemma
\ref{lemma2}, we have the following result.
\begin{corollary}\label{cor2}
Under Assumptions {\bf(S1)}--{\bf (S2)}, the following properties hold:

1. Approximation by Admissible Controls: There exists a sequence of controls $\{u_k(\cdot)\}_{k=1}^\infty \subset \mathcal{U}[t,T]$ such that:
\begin{itemize}
\item The corresponding costs $\{\mathcal{J}(\tau,\xi; u_k(\cdot))\}_{k=1}^\infty$ form a non-increasing sequence.
\item The sequence converges pointwise to the value function:\vspace{-1mm}
\begin{equation}\label{eq:3.5}
\lim_{k \to \infty} \mathcal{J}(\tau,\xi; u_k(\cdot))(\omega) = \mathbb{V}(\tau, \xi)(\omega), \quad \mathbb{P}\text{-a.s.}\vspace{-1mm}
\end{equation}
\end{itemize}

2. Interchangeability of Conditional Expectation: For any sub-$\sigma$-algebra $\mathcal{G} \subset \mathcal{F}_t$, the conditional expectation commutes with the essential infimum:\vspace{-1mm}
\begin{equation}\label{eq:3.6}
\mathbb{E}\big[\mathbb{V}(t, \xi)|\mathcal{G}\big] = \essinf_{u(\cdot)\in\mathcal{U}[t,T]} \mathbb{E}\big[\mathcal{J}(t,\xi; u(\cdot))|\mathcal{G}\big], \quad \mathbb{P}\text{-a.s.}\vspace{-2mm}
\end{equation} 
\end{corollary}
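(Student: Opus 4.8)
The statement is a direct application of Corollary \ref{cor1} to the parametrized families of costs, so the plan is simply to verify the two structural hypotheses of that corollary — a uniform lower bound by a random variable, and closure under pairwise minimization — and then read off both conclusions.

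\textbf{Setup and verification of hypotheses.} Fix $(\tau,\xi)\in[0,T]\times L^2_{\cF_\tau}(\Omega;H)$ and put
\[
\mathscr{X}\=\big\{\cJ(\tau,\xi;u(\cdot))\ \big|\ u(\cdot)\in\cU[\tau,T]\big\}.
\]
By the first estimate of Lemma \ref{lem:3.6}, every element of $\mathscr{X}$ satisfies $\cJ(\tau,\xi;u(\cdot))\ge-\cC(1+|\xi|_H^2)$, $\dbP$-a.s.; hence $\mathscr{X}$ is uniformly bounded from below by the single random variable $-\cC(1+|\xi|_H^2)$, which is integrable because $\xi\in L^2_{\cF_\tau}(\Omega;H)$. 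By Lemma \ref{lemma2}, $\mathscr{X}$ is closed under pairwise minimization. Therefore Corollary \ref{cor1} applies to $\mathscr{X}$, and by \eqref{eq-value1} its essential infimum is exactly $\mathbb{V}(\tau,\xi)$.

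\textbf{Part 1.} Corollary \ref{cor1} (through Lemma \ref{lemma1}) furnishes a non-increasing sequence $\{Z_k\}_{k\ge1}\subset\mathscr{X}$ with $Z_k\downarrow\mathbb{V}(\tau,\xi)$, $\dbP$-a.s. Since each $Z_k$ lies in $\mathscr{X}$, there is an admissible $u_k(\cdot)\in\cU[\tau,T]$ with $Z_k=\cJ(\tau,\xi;u_k(\cdot))$. The sequence $\{u_k(\cdot)\}$ is the one claimed: monotonicity of $\{Z_k\}$ gives the first bullet, and $Z_k\downarrow\mathbb{V}(\tau,\xi)$ gives \eqref{eq:3.5}.

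\textbf{Part 2.} Let $\mathcal{G}$ be any sub-$\sigma$-algebra of $\cF_t$. Since $\mathcal{G}$ is in particular a sub-$\sigma$-algebra of $\cF$, and since each $\cJ(t,\xi;u(\cdot))$ is integrable (again by Lemma \ref{lem:3.6} and $\xi\in L^2_{\cF_t}(\Omega;H)$), the interchangeability statement included in Corollary \ref{cor1} applied to $\mathscr{X}$ yields precisely \eqref{eq:3.6}. There is no real obstacle here: the two nontrivial ingredients — the lower bound reducing matters to a family of nonnegative random variables, and closure under pairwise minimum — have already been established in Lemmas \ref{lem:3.6} and \ref{lemma2}; the only point needing a moment's care is that the monotone sequence supplied by Lemma \ref{lemma1} lies inside $\mathscr{X}$ and is hence realized by genuine admissible controls $u_k(\cdot)$.
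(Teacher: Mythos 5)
Your proposal is correct and matches the paper's route exactly: the paper presents this corollary as an immediate consequence of Corollary \ref{cor1} and Lemma \ref{lemma2}, and your write-up simply makes explicit the verification of the two hypotheses (the uniform lower bound via Lemma \ref{lem:3.6} and closure under pairwise minimization via Lemma \ref{lemma2}) before reading off both conclusions. No gaps.
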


The following lemma guarantees the existence of arbitrarily approximate optimal controls.

\begin{lemma}[Existence of $\varepsilon$-Optimal Controls]\label{lem_approximate_optimal}
For any initial data $(t,\xi) \in [0,T]\times L^2_{\mathcal{F}_t}(\Omega;H)$ and any tolerance $\varepsilon>0$, there exists a  control $u^{\varepsilon}(\cdot) \in \mathcal{U}[t,T]$ satisfying the $\varepsilon$-optimality condition:\vspace{-1mm}
\begin{equation*}
\mathcal{J}(t,\xi;u^{\varepsilon}(\cdot)) \leq \mathbb{V}(t,\xi) + \varepsilon, \quad \mathbb{P}\text{-a.s.}\vspace{-1mm}
\end{equation*}
\end{lemma}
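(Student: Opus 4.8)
The plan is to deduce the existence of $\varepsilon$-optimal controls directly from the monotone approximating sequence furnished by Corollary \ref{cor2}. First I would invoke part 1 of Corollary \ref{cor2}: since the family $\{\mathcal J(t,\xi;u(\cdot))\mid u(\cdot)\in\mathcal U[t,T]\}$ is closed under pairwise minimization (Lemma \ref{lemma2}) and is uniformly bounded below (e.g.\ by $-\mathcal C(1+|\xi|_H^2)$, from Lemma \ref{lem:3.6}), Corollary \ref{cor1} applies and yields a non-increasing sequence $\{u_k(\cdot)\}_{k\ge1}\subset\mathcal U[t,T]$ with $\mathcal J(t,\xi;u_k(\cdot))\downarrow\mathbb V(t,\xi)$, $\mathbb P$-a.s. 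Set $\zeta_k\=\mathcal J(t,\xi;u_k(\cdot))-\mathbb V(t,\xi)\ge0$; then $\zeta_k\downarrow0$ $\mathbb P$-a.s.

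The difficulty is that $\zeta_k\to0$ only pointwise, not uniformly in $\omega$, so no single index $k$ gives $\zeta_k\le\varepsilon$ everywhere; the remedy is again a measurable switching argument, using that $\mathcal F_t$-measurability of the costs lets us glue countably many of the $u_k$ together. Concretely, I would define the $\mathcal F_t$-measurable sets $\Omega_1\=\{\zeta_1\le\varepsilon\}$ and, inductively, $\Omega_k\=\{\zeta_k\le\varepsilon\}\setminus\bigcup_{j<k}\Omega_j$ for $k\ge2$. Because $\zeta_k(\omega)\to0$ for a.e.\ $\omega$, the sets $\{\Omega_k\}_{k\ge1}$ cover $\Omega$ up to a $\mathbb P$-null set $N$; each $\Omega_k\in\mathcal F_t$ since each $\zeta_k$ is $\mathcal F_t$-measurable. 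Define
\begin{equation*}
u^{\varepsilon}(\cdot)\=\sum_{k=1}^{\infty}u_k(\cdot)\chi_{\Omega_k}+u_1(\cdot)\chi_{N},
\end{equation*}
which is $\mathbf F$-adapted and $U$-valued, hence lies in $\mathcal U[t,T]$.

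It remains to identify $\mathcal J(t,\xi;u^{\varepsilon}(\cdot))$ on each $\Omega_k$. Since $\Omega_k\in\mathcal F_t$ and the drift, diffusion, and running/terminal cost coefficients do not depend on the control values outside the current $\omega$, the uniqueness of solutions to the forward SEE \eqref{system2} and to the BSEE \eqref{bsystem1} gives, exactly as in the proof of Lemma \ref{lemma2},
\begin{equation*}
\mathcal J(t,\xi;u^{\varepsilon}(\cdot))\,\chi_{\Omega_k}=\mathcal J(t,\xi;u_k(\cdot))\,\chi_{\Omega_k},\qquad k\ge1,
\end{equation*}
and likewise the contribution on $N$ equals $\mathcal J(t,\xi;u_1(\cdot))\chi_N$, which is irrelevant since $\mathbb P(N)=0$. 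Therefore, $\mathbb P$-a.s.,
\begin{equation*}
\mathcal J(t,\xi;u^{\varepsilon}(\cdot))=\sum_{k=1}^{\infty}\mathcal J(t,\xi;u_k(\cdot))\chi_{\Omega_k}
\le\sum_{k=1}^{\infty}\big(\mathbb V(t,\xi)+\varepsilon\big)\chi_{\Omega_k}
=\mathbb V(t,\xi)+\varepsilon,
\end{equation*}
using $\mathcal J(t,\xi;u_k(\cdot))=\mathbb V(t,\xi)+\zeta_k\le\mathbb V(t,\xi)+\varepsilon$ on $\Omega_k$. This is the claimed $\varepsilon$-optimality. The only delicate point is the measurable patching of infinitely many controls and the verification that $\mathcal J$ respects the $\mathcal F_t$-measurable partition; both are handled by the same localization-plus-uniqueness reasoning already used for Lemma \ref{lemma2}, now applied countably many times on a partition of $\Omega$ rather than on a single set.
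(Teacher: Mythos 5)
Your proposal is correct and follows essentially the same route as the paper's proof: both extract the non-increasing approximating sequence from Corollary \ref{cor2}, disjointify the sets $\{\mathcal J(t,\xi;u_k)\le\mathbb V(t,\xi)+\varepsilon\}$ (your $\Omega_k$ are exactly the paper's $\Upsilon_k\setminus\Upsilon_{k-1}$), patch the controls over this countable $\mathcal F_t$-measurable partition, and invoke uniqueness of the forward--backward solutions to identify the cost on each piece. The only differences are cosmetic (explicit handling of the null set and of the lower bound needed to apply Corollary \ref{cor1}).
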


\begin{proof} By Corollary \ref{cor2}, there exists a non-increasing sequence of controls $\{u_k(\cdot)\}_{k=1}^{\infty} \subset \mathcal{U}[t,T]$ such that $\mathcal{J}(t,\xi;u_k(\cdot))$ converges almost surely to $\mathbb{V}(t,\xi)$.

Define the measurable sets:\vspace{-1mm}
$$
\Upsilon_0=\emptyset,\q \Upsilon_k\=\big\{\om\in\Om\big|
\cJ(t,\xi;u_k(\cdot))\le \mathbb V(t,\xi)+\varepsilon\big\},\;
k\in\dbN.\vspace{-1mm}
$$
These sets form an increasing sequence with $\bigcup_{k=1}^{\infty} \Upsilon_k = \Omega$.

Construct the $\varepsilon$-optimal control by patching:\vspace{-1mm}
$$
u^{\varepsilon}(\cdot)= \sum_{k=1}^{\infty} u_{k}(\cdot) \chi_{\Upsilon_{k}
\setminus \Upsilon_{k-1}}.\vspace{-1mm}
$$
From the uniqueness for the solution to the BSEE \eqref{bsystem1},
we see that\vspace{-1mm}
$$
\cJ(t,\xi;u^{\varepsilon}(\cdot))\chi_{\Upsilon_{k}
\setminus \Upsilon_{k-1}}=\cJ(t,\xi;u_k(\cdot))\chi_{\Upsilon_{k}
\setminus \Upsilon_{k-1}}\le
\left( \mathbb V(t,\xi)+\varepsilon\right)\chi_{\Upsilon_{k}
\setminus \Upsilon_{k-1}},\q k \in\dbN.\vspace{-1mm}
$$
Summing over all $k$ establishes the desired inequality almost surely.
\end{proof}

\ss

\begin{proof}[Proof of Theorem \ref{thm_dpp_1}]
The proof consists of two main inequalities establishing the dynamic programming principle.

\textbf{Part 1: First Inequality}

By the uniqueness of solutions to the forward-backward system \eqref{3.28-eq5}, for any $t\in [0,T] ,\xi\in L_{\mathcal{F}_t}^2(\Omega;H)$, $u(\cdot)\in\mathcal{U}[t,T]$, and $r\in[t,T]$, we have the decomposition property:\vspace{-1mm}
\begin{equation*}
G^{t,\xi;u}_{s,T}[h(X(T;t,\xi,u))] = G^{t,\xi;u}_{s,r}[Y(r;r,X(r;t,\xi,u),u)], \quad t\leq s\leq r.\vspace{-1mm}
\end{equation*}
This leads to the first key inequality:\vspace{-1mm}
\begin{equation}\label{3.28-eq8}
\begin{aligned}
\mathbb{V}(t,\xi) &= \essinf_{u\in\mathcal{U}[t,T]} G^{t,\xi;u}_{t,T}[h(X(T;t,\xi,u))] \\
&= \essinf_{u\in\mathcal{U}[t,r]} G^{t,\xi;u}_{t,r}[Y(r;r,X(r;t,\xi,u),u)] \\
&\geq \essinf_{u\in\mathcal{U}[t,r]} G^{t,\xi;u}_{t,r}[\mathbb{V}(r,X(r;t,\xi,u))].
\end{aligned}\vspace{-1mm}
\end{equation}

\textbf{Part 2: Second Inequality}

For any $\varepsilon>0$ and $u(\cdot)\in\mathcal{U}[t,r]$, Lemma \ref{lem_approximate_optimal} guarantees the existence of $u^\varepsilon(\cdot)\in\mathcal{U}[r,T]$ satisfying:\vspace{-1mm}
\begin{equation*}
\mathbb{V}(r,X(r;t,\xi,u)) \geq Y(r;r,X(r;t,\xi,u),u^\varepsilon) - \varepsilon, \quad \mathbb{P}\text{-a.s.}\vspace{-1mm}
\end{equation*}
Construct the concatenated control:\vspace{-1mm}
\begin{equation*}
\tilde{u}(s) = \begin{cases}
u(s), & t\leq s\leq r, \\
u^\varepsilon(s), & r\leq s\leq T.
\end{cases}\vspace{-1mm}
\end{equation*}
This yields the second key inequality:\vspace{-1mm}
\begin{equation}\label{3.28-eq7}
\begin{aligned}
\mathbb{V}(t,\xi) &\leq G^{t,\xi;\tilde{u}}_{t,T}[h(X(T;t,\xi,\tilde{u}))] = G^{t,\xi;u}_{t,r}[Y(r;r,X(r;t,\xi,u),u^\varepsilon)] \\
&\leq G^{t,\xi;u}_{t,r}[\mathbb{V}(r,X(r;t,\xi,u))+\varepsilon] \leq G^{t,\xi;u}_{t,r}[\mathbb{V}(r,X(r;t,\xi,u))] + \cC\varepsilon.
\end{aligned}\vspace{-1mm}
\end{equation}
The final inequality follows from the well-posedness of BSEE \eqref{3.28-eq5}. Since $\varepsilon>0$ was arbitrary, combining both parts establishes the dynamic programming principle.
\end{proof}

\ms

The value mapping $\mathbb{V}(t,\cdot)$ defines a mapping from the space $L^2_{\mathcal{F}_t}(\Omega; H)$ to $L^2_{\mathcal{F}_t}(\Omega; \mathbb{R})$. We can associate with $\mathbb{V}$ a random field $V:[0,T]\times\Omega\times H\to \mathbb{R}$ through the identification:\vspace{-1mm}
\begin{equation}\label{eq:3.13}
V(t,\omega,x) \triangleq \mathbb{V}(t,x)(\omega), \quad \text{for } (t,x)\in [0,T]\times H.\vspace{-1mm}
\end{equation}

From Lemma \ref{lem:3.6}, we know that $V$ is continuous with
respect to $x\in H$. Hence, for $(t,\xi)\in [0, T]\times
L^2_{\cF_t}(\Omega;H)$ and a.e. $\omega\in \Omega$,
$V(t,\omega,\xi(\omega))$ is well defined. 

The relationship between the abstract value mapping and its pointwise representation is established in the following proposition:
\begin{proposition}[Consistency of Value Representations]\label{prop3}
For any initial data $(t,\xi)\in [0,T]\times L^2_{\mathcal{F}_t}(\Omega;H)$, the random field $V$ satisfies:
\begin{equation}\label{eq:3.13-1}
V(t,\omega,\xi(\omega)) = \mathbb{V}(t,\xi)(\omega), \quad \mathbb{P}\text{-a.s.}
\end{equation}
\end{proposition}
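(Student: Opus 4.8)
The plan is to prove \eqref{eq:3.13-1} first for simple $\mathcal{F}_t$-measurable initial data and then pass to general $\xi\in L^2_{\mathcal{F}_t}(\Omega;H)$ by density, using the continuity of $V(t,\omega,\cdot)$ on $H$ and the local Lipschitz estimate \eqref{eq:3.16} for $\mathbb{V}(t,\cdot)$.

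For the simple case I would write $\xi=\sum_{i=1}^{N}x_i\chi_{A_i}$ with $x_i\in H$ and $\{A_i\}_{i=1}^{N}\subset\mathcal{F}_t$ a partition of $\Omega$. Fixing $i$ and an arbitrary control $u(\cdot)\in\mathcal{U}[t,T]$, I would multiply the mild formulations of the forward system \eqref{system2} and of the BSEE \eqref{bsystem1} by $\chi_{A_i}$; since $\chi_{A_i}\xi=\chi_{A_i}x_i$ and $A_i\in\mathcal{F}_t$, the uniqueness of solutions (recalled before Lemmas \ref{lemma_esti_sde} and \ref{lem_bsde}) forces $\chi_{A_i}X(\cdot;t,\xi,u)=\chi_{A_i}X(\cdot;t,x_i,u)$ and $\chi_{A_i}Y(\cdot;t,\xi,u)=\chi_{A_i}Y(\cdot;t,x_i,u)$, hence $\chi_{A_i}\mathcal{J}(t,\xi;u(\cdot))=\chi_{A_i}\mathcal{J}(t,x_i;u(\cdot))$ for every admissible $u$. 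Taking the essential infimum over $u(\cdot)\in\mathcal{U}[t,T]$ and invoking the locality of the essential infimum on $\mathcal{F}_t$-measurable sets (which follows from Definition \ref{def1}, the families being bounded from below by Lemma \ref{lem:3.6} and Corollary \ref{cor1}), I obtain $\chi_{A_i}\mathbb{V}(t,\xi)=\chi_{A_i}\mathbb{V}(t,x_i)$. Since $\mathbb{V}(t,x_i)(\omega)=V(t,\omega,x_i)$ for $\mathbb{P}$-a.e.\ $\omega$ by \eqref{eq:3.13}, and $V(t,\omega,\xi(\omega))=V(t,\omega,x_i)$ for $\omega\in A_i$, summing over the finitely many $i$ gives \eqref{eq:3.13-1} for simple $\xi$.

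Next, for general $\xi$ I would choose simple random variables $\xi_n\in L^2_{\mathcal{F}_t}(\Omega;H)$ with $\xi_n\to\xi$ in $L^2_{\mathcal{F}_t}(\Omega;H)$ and pass to a subsequence (not relabeled) along which $\xi_n(\omega)\to\xi(\omega)$ in $H$ for $\mathbb{P}$-a.e.\ $\omega$. For such $\omega$, continuity of $V(t,\omega,\cdot)$ yields $V(t,\omega,\xi_n(\omega))\to V(t,\omega,\xi(\omega))$, while \eqref{eq:3.16}, interpreted $\omega$-wise a.s., together with the (a.e.) boundedness of $\{\xi_n(\omega)\}_n$, yields $\mathbb{V}(t,\xi_n)(\omega)\to\mathbb{V}(t,\xi)(\omega)$. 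Combining these with the identity $\mathbb{V}(t,\xi_n)(\omega)=V(t,\omega,\xi_n(\omega))$ from the simple case and letting $n\to\infty$ gives \eqref{eq:3.13-1}.

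The step I expect to be the real obstacle is the simple case: carefully justifying that the $\chi_{A_i}$-restriction is compatible with the well-posedness of the forward SEE and the BSEE (this is where the $\mathcal{F}_t$-measurability of $A_i$ and the uniqueness theory are essential) and that two families of random variables which agree $\omega$-by-$\omega$ on a set $A\in\mathcal{F}_t$ have essential infima agreeing on $A$. Everything else---extracting the almost surely convergent subsequence and invoking the $x$-continuity of $V$ together with estimate \eqref{eq:3.16}---is routine, as is the null-set bookkeeping, since only finitely many exceptional sets occur in the simple case and a single countable union occurs in the limiting step.
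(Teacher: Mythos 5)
Your proposal is correct, and its overall architecture (simple initial data first, then density) matches the paper's, but your treatment of the simple case runs along a genuinely different track. The paper proves the two inequalities separately: for the upper bound it picks, for each atom $\Omega_i$, a minimizing sequence of controls $u_{ij}$ for the deterministic initial state $x_i$, patches them into $\sum_i\chi_{\Omega_i}u_{ij}$, and invokes the decomposition property of Lemma \ref{lm3.1} (stated for patched \emph{controls}); for the lower bound it uses the identity $\mathcal{J}(t,\xi;u)\chi_{\Omega_i}=\mathcal{J}(t,x_i;\chi_{\Omega_i}u)\chi_{\Omega_i}$ and bounds each piece below by $V(t,x_i)$. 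You instead localize in the \emph{initial datum}: from $\chi_{A_i}\mathcal{J}(t,\xi;u)=\chi_{A_i}\mathcal{J}(t,x_i;u)$ for every single $u$ you pass directly to $\chi_{A_i}\mathbb{V}(t,\xi)=\chi_{A_i}\mathbb{V}(t,x_i)$ via locality of the essential infimum, which delivers both inequalities at once and avoids the approximating control sequences entirely. Two small points you should make explicit. First, the identity $\chi_{A_i}X(\cdot;t,\xi,u)=\chi_{A_i}X(\cdot;t,x_i,u)$ is most cleanly obtained not from bare uniqueness but either from the conditional continuous-dependence estimate \eqref{eq:3.15-2} (whose right-hand side vanishes on $A_i$ since $\xi=x_i$ there), or by checking that the patched process $\chi_{A_i}X(\cdot;t,\xi,u)+\chi_{\Omega\setminus A_i}X(\cdot;t,x_i,u)$ solves the equation with initial datum $x_i$ — the analogue for initial data of Lemma \ref{lm3.1}. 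Second, the locality of the essential infimum does not follow verbatim from Definition \ref{def1} and Corollary \ref{cor1}; it needs the one-line patching argument: if $V^*=\essinf_u\mathcal{J}(t,\xi;u)$ and $W^*=\essinf_u\mathcal{J}(t,x_i;u)$, then $\chi_{A_i}W^*+\chi_{\Omega\setminus A_i}V^*$ is a lower bound for the family $\{\mathcal{J}(t,\xi;u)\}_u$, whence $W^*\le V^*$ on $A_i$, and symmetrically. With these two justifications supplied, your Step 1 is a valid and arguably more streamlined alternative; your Step 2 (a.s.-convergent subsequence, continuity of $V(t,\omega,\cdot)$, and the local Lipschitz estimate \eqref{eq:3.16}) coincides with the paper's.
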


By Proposition \ref{prop3}, we know that the pointwise evaluation of $V$ captures the essential infimum property of $\mathbb{V}$. For each $(t,x) \in [0,T] \times H$, the random field $V(t,\omega,x)$ is called the \textit{value function} of \textbf{Problem (OP)$_{t,x}$}, representing the minimal achievable cost when starting from state $x$ at time $t$. Throughout this work, we will typically suppress the $\omega$ dependence in our notation unless explicit emphasis on the random parameter is required.

To prove Proposition \ref{prop3}, we first give the following
result.
\begin{lemma}[Decomposition Property of Solutions]\label{lm3.1}
For any $\xi\in L^2_{\mathcal{F}_t}(\Omega;H)$, control processes $\{u^j\}_{j=1}^N\subset \mathcal{U}[t,T]$, and $\mathcal{F}_t$-measurable partition $\{\Omega_i\}_{i=1}^N$ of $\Omega$ (i.e., $\bigcup_{i=1}^N \Omega_i = \Omega$ with $\Omega_i\cap \Omega_j=\emptyset$ for $i\neq j$), the solutions to systems \eqref{system2} and \eqref{bsystem1} satisfy the decomposition property:\vspace{-1mm}
\begin{equation}\label{lm3.1-eq1}
\begin{cases}\ds
X\bigg(\cdot;t,\xi,\sum\limits_{j=1}^N\chi_{\Omega_j}u^j\bigg) = \sum\limits_{j=1}^N\chi_{\Omega_j}X(\cdot;t,\xi,u^j),\\
\ns\ds Y\bigg(\cdot;t,\xi,\sum\limits_{j=1}^N \chi_{\Omega_j}u^j\bigg) = \sum\limits_{j=1}^N\chi_{\Omega_j} Y(\cdot;t,\xi,u^j),\\
\ns\ds Z\bigg(\cdot;t,\xi,\sum\limits_{j=1}^N\chi_{\Omega_j}u^j\bigg) = \sum\limits_{j=1}^N\chi_{\Omega_j}Z(\cdot;t,\xi,u^j).
\end{cases}\vspace{-1mm}
\end{equation}
\end{lemma}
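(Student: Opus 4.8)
The plan is to exploit the indistinguishability of solutions to the forward SEE \eqref{system2} and the BSEE \eqref{bsystem1} on events belonging to the initial $\sigma$-algebra $\mathcal{F}_t$. The underlying mechanism is that both equations are driven by coefficients that are local in the state (no anticipation beyond the current $\mathcal{F}_s$-measurable data) and that $\chi_{\Omega_j}$ is $\mathcal{F}_t$-measurable, hence $\mathbf{F}$-adapted and constant in time. First I would treat the forward equation. Set $X^j(\cdot)\deq X(\cdot;t,\xi,u^j)$ and let $\widehat X(\cdot)\deq\sum_{j=1}^N\chi_{\Omega_j}X^j(\cdot)$. Since the $\Omega_j$ are disjoint with union $\Omega$ and each $\chi_{\Omega_j}$ is $\mathcal{F}_t\subset\mathcal{F}_s$-measurable for all $s\ge t$, the process $\widehat X$ is $\mathbf{F}$-adapted with paths in $L^p(\Omega;C([t,T];H))$, and multiplying the mild formulation for $X^j$ by $\chi_{\Omega_j}$ and summing gives
\begin{equation*}
\widehat X(s) = S(s-t)\xi + \int_t^s S(s-r)\,a\Big(r,\widehat X(r),\sum_{j}\chi_{\Omega_j}u^j(r)\Big)dr + \int_t^s S(s-r)\,b\Big(r,\widehat X(r),\sum_{j}\chi_{\Omega_j}u^j(r)\Big)dW(r),
\end{equation*}
where I use that on $\Omega_j$ one has $\chi_{\Omega_j}a(r,X^j(r),u^j(r)) = \chi_{\Omega_j}a(r,\widehat X(r),\sum_k\chi_{\Omega_k}u^k(r))$ pathwise (and likewise for $b$), because $\chi_{\Omega_j}$ commutes with the stochastic integral being $\mathcal{F}_t$-measurable and bounded. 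Thus $\widehat X$ is a mild solution of \eqref{system2} with control $\sum_j\chi_{\Omega_j}u^j$; by uniqueness (\cite[Theorem 3.21]{Lu2021}) it coincides with $X(\cdot;t,\xi,\sum_j\chi_{\Omega_j}u^j)$, which is the first identity.

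Next I would address the BSEE. With $X$ now known to decompose, set $(Y^j,Z^j)\deq(Y(\cdot;t,\xi,u^j),Z(\cdot;t,\xi,u^j))$ and $\widehat Y\deq\sum_j\chi_{\Omega_j}Y^j$, $\widehat Z\deq\sum_j\chi_{\Omega_j}Z^j$. Multiplying \eqref{bsystem1} for index $j$ by $\chi_{\Omega_j}$, using $\chi_{\Omega_j}h(X^j(T)) = \chi_{\Omega_j}h(\widehat X(T))$ and $\chi_{\Omega_j}f(\tau,X^j(\tau),u^j(\tau)) = \chi_{\Omega_j}f(\tau,\widehat X(\tau),\sum_k\chi_{\Omega_k}u^k(\tau))$ pathwise, and summing over $j$, one gets
\begin{equation*}
\widehat Y(s) = h(\widehat X(T)) + \int_s^T f\Big(\tau,\widehat X(\tau),\sum_{k}\chi_{\Omega_k}u^k(\tau)\Big)d\tau - \int_s^T \widehat Z(\tau)\,dW(\tau),
\end{equation*}
where moving $\chi_{\Omega_j}$ inside the stochastic integral $\int_s^T Z^j\,dW$ is again legitimate since $\chi_{\Omega_j}$ is $\mathcal{F}_t$-measurable and bounded, hence $\int_s^T\chi_{\Omega_j}Z^j\,dW = \chi_{\Omega_j}\int_s^T Z^j\,dW$. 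So $(\widehat Y,\widehat Z)$ solves BSEE \eqref{bsystem1} with the concatenated control, and by the well-posedness recalled from \cite[Section 4.2]{Lu2021} it equals $(Y(\cdot;t,\xi,\sum_j\chi_{\Omega_j}u^j),Z(\cdot;t,\xi,\sum_j\chi_{\Omega_j}u^j))$, giving the remaining two identities.

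The only genuinely delicate point — and the one I would state carefully rather than gloss over — is the interchange of the indicator $\chi_{\Omega_j}$ with the Itô stochastic integral in both the forward and backward equations. This is standard but relies specifically on $\Omega_j\in\mathcal{F}_t$ (so that $\chi_{\Omega_j}$ is adapted and may be pulled inside $\int_t^\cdot(\cdots)dW$ as a bounded predictable factor); it is exactly here that the hypothesis that $\{\Omega_i\}$ is an $\mathcal{F}_t$-measurable partition is used. Everything else is bookkeeping: finitely many disjoint events, linearity of the mild-solution map in the "patched" sense above, and the two uniqueness theorems already quoted in the excerpt.
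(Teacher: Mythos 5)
Your proposal is correct and follows essentially the same route as the paper: multiply the mild forward equation and the BSEE for each index $j$ by $\chi_{\Omega_j}$, use that the indicators are $\mathcal{F}_t$-measurable to pull them inside the coefficients and the stochastic integrals, sum over $j$, and conclude by uniqueness of solutions to \eqref{system2} and \eqref{bsystem1}. Your explicit justification of the interchange of $\chi_{\Omega_j}$ with the It\^o integrals is a point the paper passes over silently, but it is the same argument.
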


\begin{proof}
For each $j=1,\ldots,N$, denote the solution triple as\vspace{-1mm}
\begin{equation*}
(X^j(s), Y^j(s), Z^j(s)) \equiv (X(s;t,\xi,u^j), Y(s;t,\xi,u^j), Z(s;t,\xi,u^j)).\vspace{-1mm}
\end{equation*}
These satisfy the forward equation:\vspace{-1mm}
\begin{equation}\label{12.13-eq1}
\begin{aligned}
X^j(s) &= S(s-t)\xi + \int_t^s S(s-r)a(r,X^j(r),u^j(r))dr \\
&\quad + \int_t^s S(s-r)b(r,X^j(r),u^j(r))dW(r), \quad s\in [t,T],
\end{aligned}\vspace{-1mm}
\end{equation}
and the backward equation:\vspace{-1mm}
\begin{equation}\label{12.13-eq2}
Y^j(s) = h(X^j(T)) + \int_s^T f(r,X^j(r),u^j(r))dr - \int_s^T Z^j(r)dW(r), \quad s\in [t,T].\vspace{-1mm}
\end{equation}

Multiplying both \eqref{12.13-eq1} and \eqref{12.13-eq2} by $\chi_{\Omega_j}$ and summing over $j$, we obtain\vspace{-1mm}
\begin{equation*}
\begin{aligned}
\sum_{j=1}^N \chi_{\Omega_j}X^j(s) &= S(s-t)\xi + \int_t^s S(s-r)a\bigg(r,\sum_{j=1}^N\chi_{\Omega_j}X^j(r),\sum_{j=1}^N\chi_{\Omega_j}u^j(r)\bigg)dr \\
&\quad + \int_t^s S(s-r)b\bigg(r,\sum_{j=1}^N\chi_{\Omega_j}X^j(r),\sum_{j=1}^N\chi_{\Omega_j}u^j(r)\bigg)dW(r),
\end{aligned}\vspace{-1mm}
\end{equation*} 
and\vspace{-1mm}
\begin{equation*}
\begin{aligned}
\sum_{j=1}^N \chi_{\Omega_j}Y^j(s) &= h\bigg(\sum_{j=1}^N\chi_{\Omega_j}X^j(T)\bigg) - \int_s^T \sum_{j=1}^N\chi_{\Omega_j}Z^j(r)dW(r) \\
&\quad + \int_s^T f\bigg(r,\sum_{j=1}^N\chi_{\Omega_j}X^j(r),\sum_{j=1}^N\chi_{\Omega_j}u^j(r)\bigg)dr.
\end{aligned}\vspace{-1mm}
\end{equation*}
The result follows from the uniqueness of solutions to \eqref{system2} and \eqref{bsystem1}, combined with the continuity of the coefficients.
\end{proof}

\ss

\begin{proof}[Proof of Proposition \ref{prop3}]
The proof proceeds in two steps, first establishing the result for simple random variables and then extending to general ones via approximation.

\ss

\textbf{Step 1: In this step, we prove \eqref{eq:3.13-1} for simple random variables}.

Consider $\xi = \sum_{i=1}^N \chi_{\Omega_i}x_i$, where $\{\Omega_i\}_{i=1}^N$ forms an $\mathcal{F}_t$-measurable partition of $\Omega$. For each $1\leq i \leq N$, select a sequence $\{u_{ij}\}_{j=1}^\infty \subset \mathcal{U}[t,T]$ such that\vspace{-1mm}
\begin{equation}\label{3.28-eq9}
\lim_{j\to\infty} \mathcal{J}(t,x_i;u_{ij}) = \mathbb{V}(t,x_i) = V(t,x_i).\vspace{-1mm}
\end{equation}

Applying Lemma \ref{lm3.1}, we obtain the upper bound:\vspace{-1mm}
\begin{equation}\label{3.28-eq10}
\mathbb{V}(t,\xi) \leq \mathcal{J}\left(t,\sum_{i=1}^N \chi_{\Omega_i}x_i; \sum_{i=1}^N \chi_{\Omega_i}u_{ij}\right) = \sum_{i=1}^N \mathcal{J}(t,x_i;u_{ij})\chi_{\Omega_i}, \quad \mathbb{P}\text{-a.s.}\vspace{-1mm}
\end{equation}
Taking $j\to\infty$ in \eqref{3.28-eq10} yields\vspace{-1mm}
\begin{equation}\label{3.28-eq11}
\mathbb{V}(t,\xi)(\omega) \leq \sum_{i=1}^N V(t,x_i)\chi_{\Omega_i}(\omega) = V(t,\omega,\xi(\omega)), \quad \mathbb{P}\text{-a.s.}\vspace{-1mm}
\end{equation}
For the lower bound, observe that for any $u\in \mathcal{U}[t,T]$:\vspace{-1mm}
$$
\begin{aligned}
\mathcal{J}(t,\xi;u) &= \sum_{i=1}^N \mathcal{J}(t,\xi;u)\chi_{\Omega_i}  = \sum_{i=1}^N \mathcal{J}(t,x_i;\chi_{\Omega_i}u)\chi_{\Omega_i} \\
&\geq \sum_{i=1}^N V(t,x_i)\chi_{\Omega_i} = V(t,\omega,\xi(\omega)),
\end{aligned}\vspace{-1mm}
$$
which implies\vspace{-1mm}
\begin{equation}\label{3.28-eq12}
\mathbb{V}(t,\xi)(\omega) = \essinf_{u \in \mathcal{U}[t,T]} \mathcal{J}(t,\xi;u) \geq V(t,\omega,\xi(\omega)), \quad \mathbb{P}\text{-a.s.}\vspace{-1mm}
\end{equation}

Combining \eqref{3.28-eq11} and \eqref{3.28-eq12} establishes the result for simple random variables:\vspace{-1mm}
\begin{equation}\label{3.28-eq13}
V(t,\omega,\xi(\omega)) = \mathbb{V}(t,\xi)(\omega), \quad \mathbb{P}\text{-a.s.}\vspace{-1mm}
\end{equation}

\ss

\textbf{Step 2: In this step, we prove \eqref{eq:3.13-1} for general random variables}.

For arbitrary $\xi\in L^2_{\mathcal{F}_t}(\Omega;H)$, construct a sequence of simple random variables $\{\xi_N\}_{N=1}^\infty$ with $\xi_N = \sum_{i=1}^N \chi_{\Omega_i}x_i$ converging to $\xi$ in $L^2_{\mathcal{F}_t}(\Omega;H)$. 

By using the local Lipschitz estimate in Lemma \ref{lem:3.6},
%
we obtain\vspace{-1mm} 
\begin{equation}\label{3.28-eq17}
\lim_{N\to\infty} V(t,\omega,\xi_N(\omega)) = V(t,\omega,\xi(\omega)), \quad \mathbb{P}\text{-a.s.}\vspace{-1mm}
\end{equation}

Similarly, from \eqref{eq:3.16} we obtain that\vspace{-1mm}
\begin{equation}\label{3.28-eq18}
\lim_{N\to\infty} \mathbb{V}(t,\xi_N)(\omega) = \mathbb{V}(t,\xi)(\omega), \quad \mathbb{P}\text{-a.s.}\vspace{-1mm}
\end{equation}
The conclusion follows by combining \eqref{3.28-eq13}, \eqref{3.28-eq17}  and \eqref{3.28-eq18}.
\end{proof}

The following result establishes the fundamental Dynamic Programming Principle (DPP) for {\bf Problem (OP)}, which follows directly from Theorem
\ref{thm_dpp_1} and Proposition \ref{prop3}.

\begin{theorem} \label{thm-sdy1}
Let Assumptions  {\bf (S1)}--{\bf (S2)}
hold. Then the value function
$V$ satisfies the following
dynamic programming principle: for any $(t,x)\in [0,T-\delta]\times H$,
\begin{eqnarray}\label{eq:3.14}
V(t,x)= \essinf_{u(\cdot)\in\cU[t, t+\delta]}  G^{t,x;u}_{t,t+\delta}[ V(t+\delta,X(t+\delta;t,x,u))].
\end{eqnarray}
\end{theorem}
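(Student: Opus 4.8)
The plan is to derive Theorem \ref{thm-sdy1} from Theorem \ref{thm_dpp_1} and Proposition \ref{prop3} by specializing the abstract DPP at a deterministic initial state. First I would fix $(t,x) \in [0,T-\delta]\times H$ and apply Theorem \ref{thm_dpp_1} with the constant initial datum $\xi = x$ and intermediate time $r = t+\delta$, obtaining
\begin{equation*}
\mathbb{V}(t,x) = \essinf_{u(\cdot)\in\cU[t, t+\delta]} G^{t,x;u}_{t,t+\delta}[\mathbb{V}(t+\delta, X(t+\delta;t,x,u))], \quad \mathbb{P}\text{-a.s.}
\end{equation*}
Evaluating the left side via the identification \eqref{eq:3.13} gives $\mathbb{V}(t,x)(\omega) = V(t,\omega,x)$, which by the usual suppression of $\omega$ is the left side of \eqref{eq:3.14}.

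The remaining work is to rewrite the right-hand side so that the value \emph{mapping} $\mathbb{V}$ is replaced by the value \emph{function} $V$. The key point is that for each fixed $u(\cdot)\in\cU[t,t+\delta]$, the state $X(t+\delta;t,x,u)$ is an element of $L^2_{\cF_{t+\delta}}(\Omega;H)$, so Proposition \ref{prop3} (applied with initial time $t+\delta$ and $\xi = X(t+\delta;t,x,u)$) yields
\begin{equation*}
V\big(t+\delta,\omega, X(t+\delta;t,x,u)(\omega)\big) = \mathbb{V}\big(t+\delta, X(t+\delta;t,x,u)\big)(\omega), \quad \mathbb{P}\text{-a.s.}
\end{equation*}
Since two random variables that agree $\mathbb{P}$-a.s. produce, as terminal conditions $\eta$ in system \eqref{3.28-eq5}, the same solution triple $(\wt X, \wt Y, \wt Z)$ (by uniqueness for the BSEE, as used repeatedly above), we get $G^{t,x;u}_{t,t+\delta}[\mathbb{V}(t+\delta,X(t+\delta;t,x,u))] = G^{t,x;u}_{t,t+\delta}[V(t+\delta,X(t+\delta;t,x,u))]$, $\mathbb{P}$-a.s. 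Substituting this into the displayed DPP gives \eqref{eq:3.14}.

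The only genuinely delicate point is the interchange of the (uncountable) essential infimum over $\cU[t,t+\delta]$ with the $\mathbb{P}$-null sets appearing in the two pointwise identities above: the identity $G[\mathbb{V}(\cdots)] = G[V(\cdots)]$ holds off a null set that depends on $u$. This is handled exactly as in the proof of Theorem \ref{thm_dpp_1}: one invokes Corollary \ref{cor2} (and Lemma \ref{lemma2}, which guarantees the relevant family is closed under pairwise minimization so that the essential infimum is realized as the a.s.\ limit of a non-increasing sequence of admissible controls) to reduce the essential infimum to a countable one, for which a single null set suffices. Thus the proof is essentially a two-line application of the two preceding results, modulo this routine measure-theoretic bookkeeping, which I would state briefly rather than belabor.
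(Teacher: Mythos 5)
Your proposal is correct and follows exactly the route the paper intends: the paper states that Theorem \ref{thm-sdy1} ``follows directly from Theorem \ref{thm_dpp_1} and Proposition \ref{prop3},'' and your argument --- specializing Theorem \ref{thm_dpp_1} to $\xi=x$ and $r=t+\delta$, then invoking Proposition \ref{prop3} at time $t+\delta$ with $\xi=X(t+\delta;t,x,u)$ together with uniqueness for the BSEE in \eqref{3.28-eq5} --- is precisely the omitted derivation. Your remark on handling the $u$-dependent null sets via Corollary \ref{cor2} is a sensible extra precaution consistent with how the paper treats the essential infimum elsewhere.
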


\subsection{Stochastic Hamilton-Jacobi-Bellman equation}\label{sHJB}

We now derive the stochastic HJB equation associated with \textbf{Problem (OP)$_{t,\xi}$} through the dynamic programming approach. First, we introduce the generalized Hamiltonian functional:
\begin{definition}[Stochastic Hamiltonian]
The stochastic Hamiltonian $\mathbb{H}: [0,T]\times \Omega\times H \times U \times H \times \cL_2^0 \times \mathcal{L}(H)\to\mathbb{R}$ is defined as\vspace{-1mm}
\begin{equation}\label{eq:4.1}
\mathbb{H}(t,x,u,p,q,B) \triangleq f(t,x,u) + \langle p,a(t,x,u)\rangle_H + \langle q,b(t,x,u)\rangle_{\mathcal{L}_2^0} + \frac{1}{2}\langle B b(t,x,u), b(t,x,u)\rangle_{\mathcal{L}_2^0}.
\end{equation}
\end{definition}

Consider the following backward stochastic  evolution equation:\vspace{-1mm}
\begin{eqnarray}\label{eqHJB}
\left\{
\begin{array}{ll}\ds  d V + \lan A^* V_x,  x
\ran_H + \inf_{u\in U} \dbH(t,x,u, V_x, \Phi_x, V_{xx})
=\Phi dW(t) &\mbox{ in } [0,T)\times H,\\
\ns\ds V(T,x)=   h(x).
\end{array}
\right.
\end{eqnarray}
\begin{remark}
In the case that all coefficients in the state equation \eqref{system1} and cost functional \eqref{bsystem1} are deterministic, the value function undergoes a significant simplification. Specifically, the random field component $V(t,x)$ reduces to a deterministic mapping depending solely on the time-state pair $(t,x)$. This deterministic setting transforms the original stochastic HJB equation into a classical second-order nonlinear partial differential equation of Hamilton-Jacobi-Bellman type. The resulting deterministic HJB equation takes the form:\vspace{-1mm}
\begin{eqnarray}\label{eq:b16}
\left\{
\begin{array}{ll}\ds  V_t = \lan A^* V_x,  x
\ran_H + \inf_{u\in U} \mathbb{H}(t,x,u, V_x,0, V_{xx})  &\mbox{ in } [0,T)\times H,\\
\ns\ds V(T,x)= h(x).
\end{array}
\right.
\end{eqnarray}
where $V_x$ and $V_{xx}$ denote respectively the Fréchet derivative and Hessian operator of the value function. This deterministic version maintains the essential structure of the stochastic equation while eliminating the stochastic differential term, reflecting the absence of randomness in the system coefficients. The equation exhibits the characteristic features of HJB equations in infinite dimensions, combining a generator term involving the adjoint operator $A^*$ with a nonlinear Hamiltonian minimization. The terminal condition preserves the original cost function $h(x)$, completing the well-posed boundary value problem formulation.
\end{remark}

Now we give the definition of the {\it classical solution} to the
stochastic HJB equation  \eqref{eqHJB}.

\begin{definition}[Classical Solution to Stochastic HJB Equation]\label{defn:4.1}
A pair of random fields $(V, \Phi)$ is said to be a classical solution to the stochastic HJB equation \eqref{eqHJB} if the following conditions are satisfied:

\begin{enumerate}
\item $V\in \mathbb{C}_{\mathbb{F}}^{0,2}([0,T]\times H)$, and the adjoint composition $A^*V_x\in \mathbb{C}_{\mathbb{F}}^{0,0}([0,T] \times H,H)$.

\item $\Phi\in \mathbb{C}_{\mathbb{F}}^{0,1}([0,T]\times H,\wt{H})$.

\item The pair $(V, \Phi)$ satisfies the stochastic HJB equation in the integral form:\vspace{-1mm}
\begin{equation}\label{eq:4.3}
\begin{aligned}
V(t,x) = h(x) &+ \int_t^T \Big( \langle A^* V_x(s,x), x \rangle_H + \inf_{u \in U} \mathbb{H}(s,x,u, V_x(s,x), \Phi_x(s,x), V_{xx}(s,x)) \Big) ds \\
&- \int_t^T \Phi(s,x) dW(s),
\end{aligned}\vspace{-1mm}
\end{equation}
for all $(t,x) \in [0,T] \times H$, $\mathbb{P}$-almost surely.
\end{enumerate} 
%
%
\end{definition}

Assume that \textbf{Problem (OP)$_{t,\xi}$} admits an optimal control for all initial pairs $(t,\xi)\in [0,T]\times L^2_{\mathcal{F}_t}(\Omega;H)$. For a fixed $(t,x)\in [0,T]\times H$, let $(\overline{X}(\cdot),\overline{u}(\cdot))$ denote the corresponding optimal pair. Theorem \ref{thm-sdy1} yields the representation:\vspace{-1mm}
\begin{equation}\label{eq:value_rep}
V(t,x) = \mathbb{E}\left[\int_t^T f(r,\overline{X}(r),\overline{u}(r))dr + h(\overline{X}(T)) \bigg| \mathcal{F}_t\right], \quad \mathbb{P}\text{-a.s.}\vspace{-1mm}
\end{equation}
Define the conditional expectation process:\vspace{-1mm}
\begin{equation}\label{eq:cond_exp_process}
m(s,x) \triangleq \mathbb{E}\left[\int_t^T f(r,\overline{X}(r),\overline{u}(r))dr + h(\overline{X}(T)) \bigg| \mathcal{F}_s\right], \quad s\in [t,T].\vspace{-1mm}
\end{equation}
Under Assumption \textbf{(S2)}, we establish the following moment estimates:\vspace{-1mm}
$$
\begin{aligned}
&\int_t^T \mathbb{E}\left|\mathbb{E}\left[\int_t^T f(r,\overline{X}(r),\overline{u}(r))dr + h(\overline{X}(T)) \bigg| \mathcal{F}_s\right]\right|^2 ds \\
&\leq T \mathbb{E}\left[\int_t^T |f(r,\overline{X}(r),\overline{u}(r))|^2dr + |h(\overline{X}(T))|^2\right] \\
&\leq \cC\mathbb{E}\left[\int_t^T \big(1+|\overline{X}(r)|_H^2\big)dr + 1 + |\overline{X}(T)|_H^2\right] \\
&\leq \cC\big(1+|x|_{H}^2\big),
\end{aligned}\vspace{-1mm}
$$
which verifies that $m(\cdot,x) \in L_{\mathbb{F}}^{2}(t,T;\mathbb{R})$. By the martingale representation theorem (see \cite[Corollary 2.145]{Lu2021} for example), there exists a unique process $K(\cdot,x)\in L^{2}(t,T;\widetilde{H})$ such that \vspace{-1mm}
\begin{equation}\label{eq:martingale_rep}
m(s,x) = m(t,x) + \int_t^s K(r,x)dW(r), \quad s\in [t,T].\vspace{-1mm}
\end{equation}
Consequently, we obtain the following decomposition for the value function:\vspace{-1mm}
\begin{equation}\label{eq:value_decomp}
V(t,x) = m(t,x) = \int_t^T f(r,\overline{X}(r),\overline{u}(r))dr + h(\overline{X}(T)) - \int_t^T K(r,x)dW(r).\vspace{-1mm}
\end{equation} 
\begin{proposition} \label{prop:b1}
Let Assumptions {\bf (S1)}--{\bf (S3)} hold, and suppose the value function $V(\cdot,\cdot)$ of \textbf{ Problem (OP)$_{t,\xi}$}  admits the semimartingale decomposition:\vspace{-1mm}
\begin{equation}\label{eq611}
V(t,x) = h(x) + \int_t^T \Gamma(s,x) ds - \int_t^T \Phi(s,x) dW(s), \quad (t,x) \in [0,T] \times H,\vspace{-1mm}
\end{equation}
where $(V, \Phi)$ satisfies the regularity conditions (1)--(2) of Definition \ref{defn:4.1}.

Assume further the following regularity conditions hold:
\begin{enumerate}
\item[(a)] $\Gamma\in \mathbb{C}_{\mathbb{F}}^{0,1}([0,T]\times H)$;

\item[(b)] There exists a process $L \in L^4_{\mathbb{F}}(\Omega,L^2(0,T;\mathbb{R}))$ such that for almost all $(t,\omega)$:\vspace{-1mm}
$$
\begin{aligned}
&|\Gamma(t,x)| + |\Phi(t,x)|_{\widetilde{H}}+|\Gamma_x(t,x)|_H+|\Phi_x(t,x)|_{\cL_2^0} \leq L(t)(1 + |x|_H^2), \\
&|V(t,x)| + |V_x(t,x)|_H+|A^*V_x(t,x)|_H+|V_{xx}(t,x)|_{\cL(H)}  \le L(t)(1+|x|_H^2),\\
\end{aligned}
$$

%
\end{enumerate}
Then, if an optimal control $\overline{u}$ exists for each $(t,x)$, the pair $(V, \Phi)$ constitutes a classical solution to the stochastic HJB equation \eqref{eqHJB}.
\end{proposition}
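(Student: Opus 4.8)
The plan is to show that the given semimartingale decomposition forces $\Gamma$ to coincide with the Hamiltonian term $\langle A^*V_x,x\rangle_H + \inf_{u\in U}\mathbb{H}(s,x,u,V_x,\Phi_x,V_{xx})$, which is exactly condition (3) of Definition \ref{defn:4.1}. The argument splits naturally into the two inequalities $\Gamma(s,x) \le \langle A^*V_x(s,x),x\rangle_H + \inf_{u}\mathbb{H}(\cdots)$ and the reverse. Throughout, fix $(t,x)$ and work on a short interval $[t,t+\delta]$, using the DPP in the form of Theorem \ref{thm-sdy1}, namely $V(t,x) = \essinf_{u(\cdot)\in\mathcal{U}[t,t+\delta]} G^{t,x;u}_{t,t+\delta}[V(t+\delta,X(t+\delta;t,x,u))]$.

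First I would establish the ``$\le$'' direction. For an arbitrary constant control $u(\cdot)\equiv v\in U$ (more precisely, a deterministic $v$, or a control frozen near $v$), the DPP gives $V(t,x) \le G^{t,x;v}_{t,t+\delta}[V(t+\delta,X(t+\delta))]$. Now I expand both sides as semimartingales on $[t,t+\delta]$. On the left, use \eqref{eq611}: $V(t,x) = V(t+\delta,x) + \int_t^{t+\delta}\Gamma(s,x)ds - \int_t^{t+\delta}\Phi(s,x)dW(s)$. On the right, $G^{t,x;v}_{t,t+\delta}[\cdot]$ is the value at time $t$ of the BSEE $d\widetilde Y = -f(s,\widetilde X(s),v)ds + \widetilde Z dW$, $\widetilde Y(t+\delta) = V(t+\delta,\widetilde X(t+\delta))$. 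The key computation is to apply the infinite-dimensional Itô formula (for $V\in\mathbb{C}^{0,2}_{\mathbb F}$ composed with the mild solution $\widetilde X$, valid because $A^*V_x$ is assumed to be a well-defined continuous random field, e.g.\ via \cite[Ch.~4]{Lu2021} or an Itô formula for mild solutions) to $s\mapsto V(s,\widetilde X(s))$ and compare the drift of $\widetilde Y(s)$ with that of $V(s,\widetilde X(s))$. Subtracting, dividing by $\delta$, taking conditional expectation $\mathbb{E}(\cdot|\mathcal{F}_t)$, and letting $\delta\downarrow 0$ (using the continuity of all the data and the growth bound (b) to pass to the limit under the integral, plus $\widetilde X(t)=x$), one obtains $0 \le \langle A^*V_x(t,x),x\rangle_H + \mathbb{H}(t,x,v,V_x(t,x),\Phi_x(t,x),V_{xx}(t,x)) - \Gamma(t,x)$ for a.e.\ $(t,\omega)$; taking the infimum over $v$ gives the first inequality. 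The cross term producing $\langle\Phi_x(s,x),b\rangle_{\mathcal{L}_2^0}$ comes from the quadratic covariation between the $\Phi\,dW$ part of $V$ and the $b\,dW$ part of $\widetilde X$ when Itô's formula is applied to the random field $V$.

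For the reverse ``$\ge$'' direction I would use $\varepsilon$-optimal controls: by Lemma \ref{lem_approximate_optimal} (applied at the frozen initial point, together with a measurable-selection / $\varepsilon$-net argument over $U$ to handle the infimum inside $\mathbb{H}$), there is $u^\varepsilon(\cdot)\in\mathcal{U}[t,t+\delta]$ with $V(t,x) \ge G^{t,x;u^\varepsilon}_{t,t+\delta}[V(t+\delta,X(t+\delta))] - \varepsilon\delta$; running the same Itô/BSEE comparison, dividing by $\delta$, letting $\delta\downarrow 0$ and then $\varepsilon\downarrow 0$, and using that along a suitable sequence the relevant integrand can be made close to $\inf_{u}\mathbb{H}$, yields $\Gamma(t,x) \ge \langle A^*V_x(t,x),x\rangle_H + \inf_{u\in U}\mathbb{H}(t,x,u,V_x(t,x),\Phi_x(t,x),V_{xx}(t,x))$ for a.e.\ $(t,\omega)$, for all $x$ (using continuity in $x$ from (a)--(b) to upgrade from a.e.\ $x$ to all $x$). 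Combining the two inequalities identifies $\Gamma$, and substituting back into \eqref{eq611} gives precisely \eqref{eq:4.3}; together with the regularity hypotheses (1)--(2) and the terminal condition $V(T,x)=h(x)$ built into \eqref{eq611}, this shows $(V,\Phi)$ is a classical solution.

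The main obstacle is the rigorous justification of the Itô formula for the random field $V$ along the mild solution $\widetilde X$: since $X$ solves \eqref{system2} only in the mild sense and $A^*V_x$ appears, one cannot naively apply the finite-dimensional chain rule, and one must invoke an Itô formula for mild solutions (as in \cite{Lu2021}), which is why the hypothesis $A^*V_x\in\mathbb{C}^{0,0}_{\mathbb F}$ and the uniform growth bounds in (b) are imposed — they are exactly what is needed to make the limiting arguments (interchange of limit, conditional expectation, and time integral as $\delta\downarrow 0$) legitimate. A secondary technical point is the measurable selection needed to realize the infimum over $U$ by admissible controls in the $\varepsilon$-optimal step; this is routine given that $U$ is a closed bounded subset of a separable Hilbert space and $\mathbb{H}$ is continuous in $u$.
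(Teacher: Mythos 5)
Your proposal is correct in substance, and its first half is essentially the paper's Step 1: apply the It\^o--Kunita formula to $V(s,X(s))$ along the state driven by a constant control, compare the resulting drift with that of the BSEE defining $G^{t,x;u}_{t,t+\delta}[V(t+\delta,X(t+\delta))]$, use the inequality direction of the DPP, divide by $\delta$, and pass to the limit by dominated convergence to conclude $F(t,x,\eta)\deq -\Gamma+\langle A^*V_x,x\rangle_H+\mathbb{H}(\cdot,\eta,\cdot)\ge 0$ for all $\eta\in U$. Where you genuinely diverge is the reverse inequality. The paper exploits the hypothesis that an optimal control $\bar u$ exists at each $(t,x)$: equality in the DPP forces $F(s,X(s;t_0,x_0,\bar u),\bar u(s))=0$ a.e.\ along the optimal trajectory, and a localization-by-contradiction argument (continuity of $\inf_\eta F$ plus the fact that the optimal state stays in a small ball for a short time) then yields $\inf_\eta F(t,x,\eta)=0$. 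You instead use $\varepsilon$-optimal controls on $[t,t+\delta]$ with tolerance $\varepsilon\delta$, bound $F$ from below by $\inf_\eta F\ge 0$, and let $\delta\downarrow 0$ then $\varepsilon\downarrow 0$. This is a legitimate alternative; it buys you independence from the existence of an optimal control (so the hypothesis ``if an optimal control exists'' becomes superfluous for your version), at the price of needing a uniform-in-control continuity estimate $\mathbb{E}\big(\sup_{s\le t+\delta}|X(s)-x|_H^2\,\big|\,\mathcal{F}_t\big)\to 0$ and a uniform (over $\eta\in U$) modulus for $x\mapsto F(s,x,\eta)$ to replace $\inf_\eta F(s,X(s),\eta)$ by $\inf_\eta F(s,x,\eta)$ in the limit --- both of which are available from Lemma \ref{lemma_esti_sde}, the boundedness of $U$, and the growth bounds in (b). Two small points to tighten: Lemma \ref{lem_approximate_optimal} as stated produces $\varepsilon$-optimal controls for the cost on $[t,T]$, so you need its analogue for the essential infimum in Theorem \ref{thm-sdy1} (same switching/pairwise-minimum argument applied to the family $G^{t,x;u}_{t,t+\delta}[V(t+\delta,\cdot)]$); and the conclusion $F(t,x,\eta)\ge 0$ from the first step must be upgraded from ``for each fixed $\eta$, a.s.''\ to ``a.s., for all $\eta$ simultaneously'' via a countable dense subset of $U$ and continuity in $\eta$ before you may pass to $\inf_\eta F\ge 0$ inside the time integral. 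Neither is a gap in the idea, only in the bookkeeping.
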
 

\begin{proof}[Proof of Proposition \ref{prop:b1}]
To show that $(V, \Phi)$ is a classical solution to \eqref{eqHJB}, we must verify that almost surely:\vspace{-1mm}
\begin{equation}\label{eq:main_identity}
\Gamma(t,x) = \langle A^*V_x(t,x), x \rangle_H + \inf_{u\in U} \mathbb{H}(t,x,u, V_x(t,x), \Phi_x(t,x), V_{xx}(t,x)).\vspace{-1mm}
\end{equation}
We divide this into two steps.

\ss

\textbf{Step 1:} In this step, we establish a lower bound on the Hamiltonian.

For any control $u(\cdot) \in \mathcal{U}[0,T]$ and $x\in H$, let $X(\cdot) \equiv X(\cdot;t,x,u)$. Applying It\^o-Kunita formula (see Lemma \ref{Ito-Kunita}) to $V(s,X(s))$ yields\vspace{-1mm}
\begin{equation}\label{eq:ito_decomp}
\begin{aligned}
&	V(t+\delta,X(t+\delta)) - V(t,X(t)) \\&=  -\int_t^{t+\delta} \Gamma(s,X(s)) ds + \int_t^{t+\delta} \Phi(s,X(s)) dW(s) \\
& \q + \int_t^{t+\delta} \Big[ \langle A^* V_x(s,X(s)), X(s) \rangle_H + \langle V_x(s,X(s)),a(s,X(s),u(s))\rangle_H \\
&\q + \frac{1}{2} \langle V_{xx}(s,X(s)) b(s,X(s),u(s)), b(s,X(s),u(s))\rangle_{\mathcal{L}_2^0} \Big] ds \\
&\q + \int_t^{t+\delta} b(s,X(s),u(s))^* V_x(s,X(s)) dW(s) + \int_t^{t+\delta} \langle \Phi_x(s,X(s)),b(s,X(s),u(s))\rangle_{\mathcal{L}_2^0} ds.
\end{aligned}\vspace{-1mm}
\end{equation}
Consider the BSEE:\vspace{-3mm}
\begin{equation*}
\begin{cases}
	dY(s) = -f(s,X(s),u(s)) ds + Z(s) dW(s), & s \in [t,t+\delta], \\
	Y(t+\delta) = V(t+\delta,X(t+\delta)).
\end{cases}\vspace{-1mm}
\end{equation*}
The dynamic programming principle gives $V(t,X(t)) \leq Y(t)$. Defining\vspace{-1mm}
$$
\begin{aligned}
F(t,x,u) &\triangleq -\Gamma(t,x) + \langle A^*V_x(t,x), x \rangle_H + \mathbb{H}(t,x,u, V_x(t,x),\Phi_x(t,x), V_{xx}(t,x)), \\
Z'(s) &\triangleq \Phi(s,X(s)) + b(s,X(s),u(s))^* V_x(s,X(s)),
\end{aligned}\vspace{-1mm}
$$
we have\vspace{-1mm}
$$dV(s,X(s))=[F(s,X(s),u(s))-f(s,X(s),u(s))]ds+Z'(s)dW(s),$$
and thus obtain\vspace{-1mm}
\begin{equation}\label{3.28-eq1}
0 \leq Y(t) - V(t,X(t)) = \mathbb{E}\left( \int_t^{t+\delta} F(s,X(s),u(s)) ds \Big| \mathcal{F}_t \right).\vspace{-1mm}
\end{equation}

For constant control $u(\cdot) \equiv \eta \in U$, since $X\in L^2(\Omega,C([t,T],H))$, then the state process $X(s)\rightarrow x$  in $H$ as $s\rightarrow t$ almost surely.
By assumptions on $V,\Gamma$ and $\Phi$, we have $F(\cdot,\cdot,\eta)\in \mathbb{C}_{\mathbb{F}}^{0,0}([0,T]\times H)$.
Noting  
$$
|F(s,X(s),\eta)|\leq \cC L(s)(1+|X(s)|_{H}+|\eta|_{U}),
$$
applying Dominated Convergence Theorem to \eqref{3.28-eq1} yields that
\begin{equation}\label{eq:pointwise_bound}
F(t,x,\eta) \geq 0, \quad \text{a.s. for all } (t,x,\eta) \in [0,T]\times H \times U.
\end{equation}
\ss

\textbf{Step 2}.  In this step, we prove that \vspace{-1mm}
\begin{equation}\label{3.29-eq1}
	\inf_{\eta\in U}  F (t,x,\eta) = 0,  \quad \text{ for all } (t,x)\in [0,T]\times H,\quad \mathbb{P}\mbox{-a.s.}\vspace{-1mm}
\end{equation}
Suppose that \eqref{3.29-eq1} does not
hold. Then there exist  $(t_0,x_0)\in [0,T)\times H$, $\d_0>0$,
$\d_1>0$, $\Omega_0\in \cF_{t_0}$ such that $\dbP(\Omega_0)>\d_1$
and  \vspace{-1mm}
\begin{equation}\label{3.29-eq2}
	\inf_{\eta\in U}  F (t_0,x_0,\eta) >\d_0, \quad \text{for a.e.  }
	\omega\in\Omega_0.\vspace{-1mm}
\end{equation}

Since $\inf_{\eta\in U}  F (\cd,\cd,\eta)$ is continuous, there exist a stopping time $\e_1>0$ and a positive constant $\e_2$ such that\vspace{-1mm}
\begin{equation}\label{3.29-eq3}
	\inf_{\eta\in U}  F (t,x,\eta) >\frac{\d_0}{2}, \quad \text{for a.e.
	} \omega\in\Omega_0,\q \forall (t,x)\in Q_{\e_1,\e_2},\vspace{-1mm}
\end{equation}
where\vspace{-1mm}
$$
Q_{\e_1,\e_2}\deq \big\{(t,x)\in [0,T]\times H\big|\, t\in
[t_0,t_0+\e_1], \; |x-x_0|_H\leq \e_2\big\}.\vspace{-1mm}
$$

For the optimal control $\bar{u}$ at $(t_0,x_0)$,  
from the dynamic programming principle \eqref{eq:3.14}, we see that
the equality holds in \eqref{3.28-eq1} when we replace the 
control $u$ by the optimal control $\bar u$. This, together with
\eqref{eq:pointwise_bound}, implies  that
\begin{equation}\label{eq:optimal_eq}
F(s,X(s;t_0,x_0,\bar{u}),\bar{u}(s)) = 0 \quad \text{a.e. } s \in [t_0,T], \text{ a.s.}
\end{equation}

On the other hand, by the continuity of the solution $X$ to \eqref{system2}, we know that there exists a stopping time $\e_3\in (0,\e_1]$ such that\vspace{-1mm}
$$
\big|X(s;t_0,x_0,\bar u)-x_0\big|_H<\e_2,\q \forall s\in
[t_0,t_0+\e_3], \q \dbP\mbox{-a.s.}\vspace{-1mm}
$$
This, together with \eqref{3.29-eq3}, implies that\vspace{-1mm}
\begin{equation}\label{3.29-eq4}
F(s,X(s;t_0,x_0,\bar u);\bar u(s)) >\frac{\d_0}{2}, \quad  \text{for
a.e.  }  \omega\in\Omega_0,\q \forall s\in [t_0,t_0+ \e_3].\vspace{-1mm}
\end{equation}
This contradicts \eqref{eq:optimal_eq}. Hence, we know that \eqref{3.29-eq1} holds.
Thus, \eqref{eq:main_identity} holds, completing the proof.
\end{proof} 

\vspace{-1mm} 

\section{Relationships between PMP and DPP for \textbf{Problem (OP)}}\label{sec-relation}

\vspace{-1mm}

In this section, we establish the relationships between PMP and DPP for \textbf{Problem (OP)}. For the readers' convenience, we first recall the known Pontryagin type maximum principle for \textbf{Problem (OP)}. Then we consider the case that the value function  enjoys appropriate regularity in Subsection \ref{ssec-smooth}. Then we handle the general case in Subsection \ref{ssec-nonsmooth}.


\subsection{Pontryagin type maximum principle for \textbf{Problem (OP)}} 
\label{ssec-PMP}

For the reader's convenience, we first state the PMP for \textbf{Problem (OP)}. A comprehensive treatment of this result, including detailed proofs and additional applications, can be found in \cite[Chapter 12]{Lu2021}.

We begin by introducing the adjoint equations that are fundamental to the Pontryagin Maximum Principle. The first-order adjoint equation is given by\vspace{-1mm}
\begin{equation}\label{ad-eq1}
\begin{cases}
dp(t)=-A^*p(t)dt-\big(a_x(t,\overline{X}(t),\bar{u}(t))^*p(t)+b_x(t,\overline{X}(t),\bar{u}(t))^*q(t)\\
\ \ \ \ \ \ \ \ \q  -f_x(t,\overline{X}(t),\bar{u}(t))\big)dt+q(t)dW(t), \qq \q \q \qq t\in[0,T),\\
p(T)=-h_x(\overline{X}(T)).
\end{cases}\vspace{-1mm}
\end{equation}
The second-order adjoint equation takes the form:\vspace{-1mm}
\begin{eqnarray}\label{ad-eq2}
\begin{cases}
dP(t)=-\big[ \big(A^*+a_x(t,\overline{X}(t),\bar{u}(t))^*\big)P(t)+ P(t)\big(A+a_x(t,\overline{X}(t),\bar{u}(t))\big) \\
\ \ \ \ \ \ \ \ \q +b_x(t,\overline{X}(t),\bar{u}(t))^*P(t)b _x(t,\overline{X}(t),\bar{u}(t))+b_x(t,\overline{X}(t),\bar{u}(t))^*Q(t) \\
\ \ \ \ \ \ \ \ \ \q  +Q(t)b_x(t,\overline{X}(t),\bar{u}(t))\!+\!\mathscr{H}_{xx}(t,\overline{X}(t),\bar{u}(t),p(t),q(t))\big]dt\! +Q(t)dW(t), \ \  t\in [0,T),\\
P(T)=-h_{xx}(\overline{X}(T)),
\end{cases}
\end{eqnarray}
where the Hamiltonian $\mathscr{H}$ is defined as\vspace{-1mm}
$$
\begin{array}{ll}\ds
\mathscr{H}(t,x,u,p,q)=\langle p,a(t,x,u)\rangle_H + \langle q,b(t,x,u)\rangle_{\mathcal{L}_2^0}-f(t,x,u),\\
\ns\ds \qq\qq\qq\qq\q (t,x,u,p,q)\in[0,T]\times H\times U\times
H\times \mathcal{L}_2^0.
\end{array}\vspace{-1mm}
$$

Equation \eqref{ad-eq1} represents an $H$-valued BSEE that admits a unique mild solution pair $(p,q) \in L^2_{\mathbb{F}}(\Omega;C([0,T];H)) \times L^2_{\mathbb{F}}(0,T;\mathcal{L}_2^0)$, as established in \cite[Section 4.2]{Lu2021}. The quadruple $(\overline{X}(\cdot), \overline{u}(\cdot), p(\cdot), q(\cdot))$ forms what we call an \emph{optimal 4-tuple} for \textbf{Problem (OP)}.

In the finite-dimensional case where $H = \mathbb{R}^n$, equation \eqref{ad-eq2} can be interpreted as an $\mathbb{R}^{n^2}$-valued backward stochastic evolution equation (BSEE), whose well-posedness follows directly from the standard Hilbert space BSEE theory (see \cite[Section 4.2]{Lu2021}). However, in the infinite-dimensional setting ($\dim H = \infty$), fundamental analytical challenges arise.

No existing stochastic integration or evolution equation theory in general Banach spaces can establish the well-posedness of \eqref{ad-eq2} in the conventional sense. This necessitates the use of a generalized solution concept--the relaxed transposition solution--for \eqref{ad-eq2}, which we briefly recall next.

These limitations necessitate the introduction of an alternative solution concept--the relaxed transposition solution--for the second-order adjoint equation \eqref{ad-eq2}, which we shall recall in what follows.

For notational simplicity, we introduce the following abbreviations:\vspace{-1mm}
$$
\begin{cases}
J(t)=a_x(t,\overline{X}(t),\bar{u}(t)),\q K(t)=b_x(t,\overline{X}(t),\bar{u}(t)), \\
\ns\ds F(t)= -\mathscr{H}_{xx}(t,\overline{X}(t),\bar{u}(t),p(t),q(t)),\q P_T=-h_{xx}(\overline{X}(T)).
\end{cases}\vspace{-1mm}
$$

We define the space of pointwise-defined operators:\vspace{-1mm}
\begin{equation*}
\begin{array}{ll}\ds
\mathcal{L}_{pd}(L_{\mathbb{F}}^2(0,T;L^{4}(\Omega,H));L^2_{\mathbb{F}}(0,T;L^{\frac{4}{3}}(\Omega,H)))\\
\ns\ds \deq \Big\{L\!\in\! \cL\big(L_{\mathbb{F}}^2(0,T;L^{4}(\Omega,H));L^2_{\mathbb{F}}(0,T;L^{\frac{4}{3}}(\Omega,H))\big) \big| \mbox{for }\ae (t,\omega)\in [0,T]\times\Omega, \mbox{there
is }\\
\ns\ds\q \wt L(t,\omega)\!\in\!\mathcal{L}(H)\;  \mbox{such that } \big( L v(\cd)\big)(t,\omega)
= \wt L (t,\omega)v(t,\omega),
\forall\; v(\cd)\in
L_{\mathbb{F}}^2(0,T;L^{4}(\Omega,H))\Big\}.
\end{array}\vspace{-1mm}
\end{equation*}
When no confusion arises, we identify $L\in \mathcal{L}_{pd}(L_{\mathbb{F}}^2(0,T;L^{4}(\Omega,H));L^2_{\mathbb{F}}(0,T; L^{\frac{4}{3}}(\Omega,H)))$ with its pointwise representation $\widetilde{L}(\cdot,\cdot)$.

Define the solution spaces:\vspace{-1mm}
\begin{eqnarray*}
\cP[0,T] \3n& \deq\3n & \big\{P(\cdot,\cdot)\ |\ P(\cdot,\cdot)\in \mathcal{L}_{pd}(L_{\mathbb{F}}^2(0,T;L^{4}(\Omega,H));L^2_{\mathbb{F}}(0,T;L^{\frac{4}{3}}(\Omega,H))),\\
& & \ \ \ P(\cdot,\cdot)\xi \in D_{\mathbb{F}}([t,T];L^{\frac{4}{3}}(\Omega,H))) \ \textup{and} \ |P(\cdot,\cdot)\xi|_{D_{\mathbb{F}}([t,T];L^{\frac{4}{3}}(\Omega,H))}\\
& & \ \ \ \leq \cC |\xi|_{L_{\mathcal{F}_t}^{4}(\Omega;H)} \ \textup{for every}\ t\in [0,T] \ \textup{and} \ \xi \in L_{\mathcal{F}_t}^{4}(\Omega;H)\big\},
\end{eqnarray*}
and\vspace{-4mm}
\begin{eqnarray*}
\mathcal{Q} [0,T]
\3n& \deq\3n &\big\{(Q^{(\cdot)},\widehat{Q}^{(\cdot)})\ |\ Q^{(t)},\widehat{Q}^{(t)}\in \mathcal{L}(\mathcal{H}_t;L_{\mathbb{F}}^2(t,T;L^{\frac{4}{3}}(\Omega;\mathcal{L}_2^0)))\\
& & \hspace{0.8cm} \textup{and}\  Q^{(t)}(0,0,\cdot)^{*}=\widehat{Q}^{(t)}(0,0,\cdot) \ \textup{for any}\  t\in [0,T)\big\}
\end{eqnarray*}
with
\begin{equation*}
\mathcal{H}_t\deq L_{\mathcal{F}_t}^{4}(\Omega;H)\times L_{\mathbb{F}}^2(t,T;L^{4}(\Omega;H))\times L_{\mathbb{F}}^2(t,T;L^{4}(\Omega;\mathcal{L}_2^0)),\ \ \forall \  t\in [0,T).
\end{equation*}

For $j=1,2$ and $t\in [0,T)$, consider the test equation: \vspace{-1mm}
\begin{equation}\label{test-eq1}
\begin{cases}
d\f_j=(A+J)\f_jds+u_jds+K\varphi_jdW(s)+v_jdW(s) &\textup{ in } (t,T],\\
\f_j(t)=\xi_j
\end{cases}
\end{equation}
where $\xi_j \in L_{\mathcal{F}_t}^{4}(\Omega;H)$, $u_j\in L_{\mathbb{F}}^2(t,T;L^{4}(\Omega;H))$, and $v_j \in L_{\mathbb{F}}^2(t,T;L^{4}(\Omega;\mathcal{L}_2^0))$. By standard SEE theory \cite[Section 3.2]{Lu2021}, equation \eqref{test-eq1} admits a unique mild solution $\varphi_j\in C_{\mathbb{F}}([t,T];L^4(\Omega;H))$.

\begin{definition}\label{def2.1} A $3$-tuple $(P(\cdot), Q^{(\cdot)},\widehat{Q}^{(\cd)})\in
\mathcal{P}[0,T]\times \mathcal{Q}[0,T]$ is called a
relaxed transposition solution to the equation \eqref{ad-eq2}  if for any $t\in
[0,T]$, $\xi_j \in L_{\mathcal{F}_t}^{4}(\Omega; H)$, $
u_j(\cdot)\in L_{\mathbb{F}}^2(t,T;L^{4}(\Omega;H))$
and $v_j(\cdot)\in
L_{\mathbb{F}}^2(t,T;L^{4}(\Omega;\mathcal{L}_2^0))$ ($j=1,2$), it holds that\vspace{-1mm}
\begin{eqnarray*}
& & \mathbb{E}\lan P_T\f_1(T),\f_2(T)\ran_H-\mathbb{E}\int_t^T \lan F(s)\f_1(s),\f_2(s)\ran_H ds\\
& & =\mathbb{E}\langle P(t)\xi_1,\xi_2\rangle_H+\mathbb{E}\int_t^T \lan P(s)u_1(s),\f_2(s)\ran_Hds+\mathbb{E}\int_t^T \lan P(s)\f_1(s),u_2(s)\ran_Hds\\
& & \ \ \ +\mathbb{E}\int_t^T \lan P(s)K(s)\f_1(s),v_2(s)\ran_{\mathcal{L}_2^0}ds +\mathbb{E}\int_t^T \lan P(s)v_1(s),K(s)\f_2(s)+v_2(s)\ran_{\mathcal{L}_2^0}ds\\
& & \ \ \ +\mathbb{E}\int_t^T \lan v_1(s),\widehat{Q}^{(t)}(\xi_2,u_2,v_2)(s)\ran_{\mathcal{L}_2^0}ds +\mathbb{E}\int_t^T \lan  Q^{(t)}(\xi_1,u_1,v_1)(s),v_2(s)\ran_{\mathcal{L}_2^0}ds
\end{eqnarray*}
\end{definition}

As an immediate corollary of \cite[Theorem 12.9]{Lu2021}, we have
the following well-posedness result for the equation \eqref{ad-eq2}.

\begin{proposition}\label{prp2.1} The equation \eqref{ad-eq2}
admits a unique relaxed transposition solution $(P(\cdot),
Q^{(\cdot)}, \widehat{Q}^{(\cdot)})$. Furthermore,\vspace{-4mm}
\begin{eqnarray}
& & |P|_{\mathcal{L}(L_{\mathbb{F}}^2(0,T;L^{4}(\Omega,H));L^2_{\mathbb{F}}(0,T;L^{\frac{4}{3}}(\Omega,H)))} + \sup\limits_{t\in[0,T)}|(Q^{(t)},\widehat{Q}^{(t)})|_{ \mathcal{L}(\mathcal{H}_t;L_{\mathbb{F}}^2(t,T;L^{\frac{4}{3}}(\Omega;\mathcal{L}_2^0)))^2}\nonumber\\
& & \leq \cC\big(|F|_{L_{\mathbb{F}}^1(0,T;L^2(\Omega;\mathcal{L}(H)))}+|P_T|_{L_{\mathcal{F}_T}^2(\Omega;\mathcal{L}(H))}\big).
\end{eqnarray}
\end{proposition}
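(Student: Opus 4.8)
The plan is to deduce Proposition \ref{prp2.1} directly from \cite[Theorem 12.9]{Lu2021} by matching the abstract hypotheses of that theorem to our concrete data. First I would verify that the coefficient processes appearing in the second-order adjoint equation \eqref{ad-eq2} satisfy the integrability requirements demanded in \cite[Chapter 12]{Lu2021}: namely that $J(\cdot)=a_x(\cdot,\overline{X}(\cdot),\bar u(\cdot))$ and $K(\cdot)=b_x(\cdot,\overline{X}(\cdot),\bar u(\cdot))$ are (essentially) bounded $\cL(H)$- and $\cL(H;\cL_2^0)$-valued processes, using Assumption {\bf (S3)} (uniform boundedness of the second derivatives forces the first derivatives to be bounded along the optimal trajectory, since $a,b$ are globally Lipschitz by {\bf (S1)}), that the source term $F(\cdot)=-\mathscr H_{xx}(\cdot,\overline{X}(\cdot),\bar u(\cdot),p(\cdot),q(\cdot))$ lies in $L^1_{\mathbb{F}}(0,T;L^2(\Omega;\cL(H)))$ — here I would expand $\mathscr H_{xx}$ into $\langle p,a_{xx}\rangle+\langle q,b_{xx}\rangle-f_{xx}$, bound $a_{xx},b_{xx},f_{xx}$ by $\cC$ via {\bf (S3)}, and use $p\in L^2_{\mathbb{F}}(\Omega;C([0,T];H))$ and $q\in L^2_{\mathbb{F}}(0,T;\cL_2^0)$ from the first-order adjoint well-posedness — and that the terminal datum $P_T=-h_{xx}(\overline{X}(T))$ lies in $L^2_{\mathcal F_T}(\Omega;\cL(H))$, again by the uniform bound on $h_{xx}$ in {\bf (S3)}.

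Next I would check that the solution spaces $\mathcal P[0,T]$, $\mathcal Q[0,T]$ and the notion of relaxed transposition solution in Definition \ref{def2.1} coincide (up to the cosmetic relabeling of exponents $4$ and $4/3$, which is the Hölder-conjugate pairing used throughout \cite{Lu2021}) with the objects in \cite[Theorem 12.9]{Lu2021}, and that the test equation \eqref{test-eq1} is exactly the one used there. Once the dictionary is in place, existence and uniqueness of $(P(\cdot),Q^{(\cdot)},\widehat Q^{(\cdot)})$ and the a priori estimate follow verbatim from that theorem; the estimate constant $\cC$ absorbs the operator norm of the semigroup $S(\cdot)$ and $T$.

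The only genuine point requiring care — and the step I expect to be the main obstacle — is the verification that $F\in L^1_{\mathbb{F}}(0,T;L^2(\Omega;\cL(H)))$. The term $\langle q(t),b_{xx}(t,\overline X(t),\bar u(t))\rangle_{\cL_2^0}$ is only quadratically controlled in $q$ in the natural norm, so one must be slightly careful: writing $|\mathscr H_{xx}|_{\cL(H)}\le \cC(1+|p(t)|_H+|q(t)|_{\cL_2^0})$ pointwise, we get $|F|_{L^1_{\mathbb{F}}(0,T;L^2(\Omega;\cL(H)))}\le \cC\big(1+|p|_{L^2_{\mathbb{F}}(0,T;L^2(\Omega;H))}+|q|_{L^1_{\mathbb{F}}(0,T;L^2(\Omega;\cL_2^0))}\big)$, and the last norm is finite because $q\in L^2_{\mathbb{F}}(0,T;\cL_2^0)=L^2_{\mathbb{F}}(0,T;L^2(\Omega;\cL_2^0))$ and $[0,T]$ has finite measure. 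Granting this, Proposition \ref{prp2.1} is an immediate specialization of \cite[Theorem 12.9]{Lu2021}, and the proof reduces to citing that result after the hypotheses have been checked.
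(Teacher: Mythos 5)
Your proposal is correct and follows the same route as the paper, which gives no written proof at all and simply states the proposition as an immediate corollary of \cite[Theorem 12.9]{Lu2021}; your added verification that $F\in L^1_{\mathbb{F}}(0,T;L^2(\Omega;\mathcal{L}(H)))$ and $P_T\in L^2_{\mathcal{F}_T}(\Omega;\mathcal{L}(H))$ via {\bf (S3)} and the regularity of $(p,q)$ is exactly the hypothesis check that the citation implicitly requires. (Minor quibble: the term $\langle q,b_{xx}\rangle_{\mathcal{L}_2^0}$ is linear, not quadratic, in $q$, and the linear bound you actually write down is the correct one.)
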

($\overline{X} (\cdot), \bar{u}(\cdot), p(\cdot),
q(\cdot), P(\cdot),
Q^{(\cdot)}, \widehat{Q}^{(\cdot)}$) is called an {\it optimal 7-tuple of} \textbf{Problem (OP)}.

Now we can present the PMP for \textbf{Problem} $\boldsymbol{(S_{x})}$.
\begin{theorem}\label{maximum p2-1}
Suppose that  the assumptions
{\bf (S1)}--{\bf (S3)} hold. Let $(\overline{X} (\cdot), \bar{u}(\cdot), p(\cdot),
q(\cdot),P(\cdot),
Q^{(\cdot)},$ $\widehat{Q}^{(\cdot)})$ be an optimal 7-tuple of \textbf{Problem} $\boldsymbol{(S_{x})}$.
Then,   for  a.e. $(t,\omega)\in [0,T]\times
\Omega$ and for all $\rho \in U$,
\begin{equation*}\label{MP2-eq1-1}
\begin{array}{ll}\ds
\mathscr{H}\big(t,\overline X(t),\bar u(t),p(t),q(t)\big) - \mathscr{H}\big(t,\overline X(t),\rho,p(t),q(t)\big) \\
\ns\ds   - \frac{1}{2}\big\langle
P(t)\big[ b\big(t,\overline X(t),\bar
u(t)\big)-b\big(t,\overline X(t),\rho\big)
\big],  b\big(t,\overline X(t),\bar
u(t)\big)-b\big(t,\overline X(t),\rho\big)
\big\rangle_{\mathcal{L}_2^0} \geq 0.
\end{array}
\end{equation*}
\end{theorem}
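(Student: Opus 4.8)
The plan is to derive the maximum principle from the stochastic HJB equation via the correspondence between the adjoint processes and the derivatives of the value function, following the classical route of Pontryagin--Bismut but adapted to the infinite-dimensional non-Markovian setting. First I would fix an optimal pair $(\overline{X}(\cdot),\bar u(\cdot))$ and the associated first- and second-order adjoint data $(p,q)$ and $(P,Q^{(\cdot)},\widehat{Q}^{(\cdot)})$. By Theorem \ref{thm-sdy1}, for any admissible control $u(\cdot)\in\cU[t,t+\delta]$ and any $(t,x)$, applying the It\^o--Kunita formula to $V(s,X(s;t,x,u))$ exactly as in the proof of Proposition \ref{prop:b1} gives, after comparison with the BSEE for $Y$, the inequality
\begin{equation*}
0\le \mathbb{E}\Big(\int_t^{t+\delta} F(s,X(s),u(s))\,ds\,\Big|\,\mathcal{F}_t\Big),
\end{equation*}
where $F(t,x,\eta)=-\Gamma(t,x)+\langle A^*V_x(t,x),x\rangle_H+\dbH(t,x,\eta,V_x(t,x),\Phi_x(t,x),V_{xx}(t,x))$, and that equality holds along the optimal control. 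Combined with $\inf_{\eta\in U}F(t,x,\eta)=0$ from Proposition \ref{prop:b1}, this yields the pointwise relation
\begin{equation*}
\dbH(t,\overline{X}(t),\bar u(t),V_x(t,\overline{X}(t)),\Phi_x(t,\overline{X}(t)),V_{xx}(t,\overline{X}(t)))\le \dbH(t,\overline{X}(t),\rho,V_x(t,\overline{X}(t)),\Phi_x(t,\overline{X}(t)),V_{xx}(t,\overline{X}(t)))
\end{equation*}
for all $\rho\in U$, a.e. $(t,\omega)$.

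The second step is to translate this into the language of the adjoint equations. The key identifications to establish (along the optimal trajectory) are: $p(t)=-V_x(t,\overline{X}(t))$, $q(t)=-\Phi_x(t,\overline{X}(t))-V_{xx}(t,\overline{X}(t))b(t,\overline{X}(t),\bar u(t))$, and a correspondence relating $P(t)$ to $-V_{xx}(t,\overline{X}(t))$ (in the relaxed transposition sense, tested against solutions of \eqref{test-eq1}). These should be obtained by comparing the BSEE \eqref{ad-eq1} with the equation satisfied by $-V_x(s,\overline{X}(s))$ — differentiate the It\^o--Kunita expansion \eqref{eq:ito_decomp} in the spatial variable, use the stationarity $\partial_x F(t,\overline{X}(t),\bar u(t))=0$ coming from the infimum being attained at $\bar u$, and match drift and diffusion parts, invoking uniqueness of the adjoint BSEEs. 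Once $p$, $q$ are expressed via $V_x,\Phi_x,V_{xx}$, the relation $\dbH(\cdot,\bar u(t),\cdot)=\mathscr{H}(\cdot,\bar u(t),\cdot)+$ (quadratic correction in $P(t)$) — namely $\mathscr{H}(t,x,u,p,q)=-\dbH(t,x,u,-p,-q-Pb(t,x,\bar u),-P)$ modulo the $b$-dependent terms — lets us rewrite the Hamiltonian inequality above, and the gap between the linear-in-$b$ and quadratic-in-$b$ pieces produces precisely the term $-\tfrac12\langle P(t)[b(t,\overline{X}(t),\bar u(t))-b(t,\overline{X}(t),\rho)],b(t,\overline{X}(t),\bar u(t))-b(t,\overline{X}(t),\rho)\rangle_{\cL_2^0}$.

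I expect the main obstacle to be the rigorous treatment of the second-order term: since in infinite dimensions $P$ exists only as a relaxed transposition solution and $V_{xx}$ is merely a bounded self-adjoint-operator-valued random field with no a priori link to $P$ beyond what the two variational characterizations share, one cannot simply write "$P(t)=-V_{xx}(t,\overline{X}(t))$'' pointwise. The honest route is to avoid any direct identification of $P$ with $V_{xx}$ and instead to derive the inequality purely from PMP side: take the spike variation of $\bar u$ at a Lebesgue point, expand $\cJ(x;u^{\e}(\cdot))-\cJ(x;\bar u(\cdot))\ge 0$ to second order using the first- and second-order variational equations, and express the second-order term through the relaxed transposition solution $(P,Q^{(\cdot)},\widehat{Q}^{(\cdot)})$ via Definition \ref{def2.1}; this is exactly the mechanism behind \cite[Chapter 12]{Lu2021}. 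Thus in practice I would present the statement as the known PMP from \cite[Theorem 12.?]{Lu2021} specialized to \textbf{Problem (OP)}, citing that the variational/duality analysis there applies verbatim because Assumptions \textbf{(S1)}--\textbf{(S3)} match its hypotheses; the only genuinely new content relative to \cite{ChenLu} is the non-Markovian adjoint framework already set up above, and the proof reduces to checking that the spike-variation estimates and the relaxed transposition identity are insensitive to whether the coefficients are random. The routine parts — the Taylor expansions of $a,b,f,h$ controlled by \textbf{(S3)}, the $L^p$ moment bounds on the variational processes from Lemma \ref{lemma_esti_sde}-type arguments, and passing to the limit $\e\downarrow0$ — I would not reproduce.
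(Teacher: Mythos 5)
Your final position is exactly what the paper does: Theorem \ref{maximum p2-1} is stated as a recalled result with no proof given, only the citation to \cite[Chapter 12]{Lu2021}, where it is established by spike variations together with the relaxed transposition solution of the second-order adjoint equation under hypotheses matching \textbf{(S1)}--\textbf{(S3)}. The preliminary DPP-based derivation you sketch in your first two paragraphs is not the paper's route and, as you yourself note, would fail here because Theorem \ref{maximum p2-1} assumes no smoothness of $V$ and no pointwise identification of $P(t)$ with $-V_{xx}(t,\overline{X}(t))$ is available (that direction is the content of the later Theorems \ref{Th5} and \ref{th5.1}, which go from PMP to DPP, not the reverse); since you explicitly discard it in favor of the citation, the proposal is correct.
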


\subsection{Relationships between PMP and DPP: smooth case}\label{ssec-smooth}

In this subsection, we provide the relationship between the PMP and DPP under smoothness assumptions on the value function. Our results reveal that the adjoint variables $(p(\cdot), q(\cdot))$ and the value function $V(\cdot,\cdot)$ are fundamentally connected--at least formally.

\begin{theorem}\label{Th5}
	Let {\rm ({\bf S1})--({\bf S3})} hold and fix $x \in H$. Let $(\overline{X}(\cdot), \bar{u}(\cdot), p(\cdot), q(\cdot))$ be an optimal $4$-tuple for \textbf{Problem (OP)}. Assume the value function $V$ and the corresponding stochastic fields $\Gamma$ and $\Phi$ satisfying the same Assumptions as in Proposition $\ref{prop:b1}$,
	%
	%
Then, for almost every $(t,\omega) \in [0,T] \times \Omega$,
	$$\begin{aligned}
		\Gamma(t,\overline{X}(t)) &= \left\langle A^*V_x(t,\overline{X}(t)), \overline{X}(t)\right\rangle_H + \dbH\left(t,\overline{X}(t),\overline{u}(t), V_{x}(t,\overline{X}(t)), \Phi_{x}(t,\overline{X}(t)), V_{xx}(t,\overline{X}(t))\right) \\
		&= \left\langle A^*V_x(t,\overline{X}(t)), \overline{X}(t)\right\rangle_H + \inf_{u \in U} \dbH\left(t,\overline{X}(t), u, V_{x}(t,\overline{X}(t)), \Phi_{x}(t,\overline{X}(t)), V_{xx}(t,\overline{X}(t))\right).
	\end{aligned}$$
	Furthermore, if $V \in \mathbb{C}_{\mathbb{F}}^{0,3}([0, T] \times H),\Phi \in \mathbb{C}_{\mathbb{F}}^{0,2}([0, T] \times H; \wt H)$ and $A^*V_x\in \mathbb{C}_{\mathbb{F}}^{0,1}([0,T]\times H;H)$
then\vspace{-3mm}
\begin{equation}\label{th4.1-eq2}
	\begin{cases}
		V_{x}(t,\overline{X}(t))=-p(t),  \\
		\ns\ds
		V_{xx}(t,\overline{X}(t))b(t,\overline{X}(t),\overline{u}(t))+\Phi_x(t,\overline{X}(t))=-q(t),
	\end{cases}\q \mbox{ a.e. }  (t,\omega)\in [0,T]\times\Omega.\vspace{-1mm}
\end{equation}
\end{theorem}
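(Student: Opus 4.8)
The plan is to apply the It\^o--Kunita formula to the process $V(t,\overline X(t))$ along the optimal trajectory and compare the resulting semimartingale decomposition with that of the first-order adjoint process $p(t)$, exploiting the optimality to convert the dynamic programming inequality into an equality. First I would record that, by Proposition \ref{prop:b1} (whose hypotheses are assumed here), the pair $(V,\Phi)$ is a classical solution of the stochastic HJB equation \eqref{eqHJB}, so that $\Gamma(t,x) = \langle A^*V_x(t,x),x\rangle_H + \inf_{u\in U}\dbH(t,x,u,V_x(t,x),\Phi_x(t,x),V_{xx}(t,x))$ for all $(t,x)$, $\dbP$-a.s. Evaluating this identity at $x=\overline X(t)$ already gives the second displayed equality of the theorem. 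For the first equality, I would revisit the argument in Step 1 of the proof of Proposition \ref{prop:b1}: along the optimal control $\overline u$, the dynamic programming principle \eqref{eq:3.14} holds with equality rather than inequality, so the conditional-expectation identity \eqref{3.28-eq1} forces $F(s,\overline X(s),\overline u(s)) = 0$ for a.e.\ $s$, a.s.; unwinding the definition of $F$ yields $\Gamma(t,\overline X(t)) = \langle A^*V_x(t,\overline X(t)),\overline X(t)\rangle_H + \dbH(t,\overline X(t),\overline u(t),V_x(t,\overline X(t)),\Phi_x(t,\overline X(t)),V_{xx}(t,\overline X(t)))$, which is precisely the first equality.

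For the second part, under the extra regularity $V\in\dbC^{0,3}_{\dbF}$, $\Phi\in\dbC^{0,2}_{\dbF}$, $A^*V_x\in\dbC^{0,1}_{\dbF}$, I would differentiate the HJB identity in $x$. By the envelope/Danskin-type argument the first-order condition at the optimum reads $\partial_u\dbH(t,\overline X(t),\overline u(t),V_x,\Phi_x,V_{xx}) = 0$ in the relevant variational sense (this is where the minimization is attained at $\overline u(t)$), so differentiating $\Gamma(t,x) = \langle A^*V_x,x\rangle + \inf_u\dbH(\cdots)$ in $x$ and evaluating at $\overline X(t)$ kills the term coming from the $x$-dependence of the minimizer. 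This produces an identity expressing $\Gamma_x(t,\overline X(t))$ in terms of $A^*V_x$, $A^*V_{xx}$, $V_x$, $V_{xx}$, $V_{xxx}$, $\Phi_x$, $\Phi_{xx}$ and the derivatives $a_x,b_x,f_x$ of the coefficients evaluated at $(\overline X(t),\overline u(t))$.

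Next I would apply the It\^o--Kunita formula to $V_x(t,\overline X(t))$ (now legitimate because $V_x$ is a $\dbC^{0,2}_{\dbF}$ random field and $\overline X$ is a mild solution of \eqref{system1}), obtaining a BSEE-type decomposition
\begin{equation*}
dV_x(t,\overline X(t)) = \big[\cdots\big]dt + \big[V_{xx}(t,\overline X(t))b(t,\overline X(t),\overline u(t)) + \Phi_x(t,\overline X(t))\big]dW(t),
\end{equation*}
where the drift, after substituting the differentiated HJB identity from the previous step, is exactly $-A^*\big(-V_x(t,\overline X(t))\big) - \big(a_x^*(-V_x) + b_x^*(\text{diffusion part}) - f_x\big)$, i.e.\ the drift of the first-order adjoint equation \eqref{ad-eq1} with $p$ replaced by $-V_x(t,\overline X(t))$ and $q$ replaced by $-\big(V_{xx}b+\Phi_x\big)$. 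Since the terminal condition matches, $V_x(T,\overline X(T)) = h_x(\overline X(T)) = -p(T)$, the well-posedness (uniqueness of mild solutions) of the linear BSEE \eqref{ad-eq1} forces $-V_x(t,\overline X(t)) = p(t)$ and, reading off the martingale integrands, $-\big(V_{xx}(t,\overline X(t))b(t,\overline X(t),\overline u(t)) + \Phi_x(t,\overline X(t))\big) = q(t)$, which is \eqref{th4.1-eq2}.

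The main obstacle I anticipate is making the envelope argument and the $x$-differentiation of $\inf_{u\in U}\dbH$ rigorous in the infinite-dimensional, random-coefficient setting: one must justify that the infimum is attained, that the minimizer can be chosen measurably, and that the first-order optimality condition holds in a form strong enough to annihilate the $\partial_x(\text{minimizer})$ contribution after differentiation. A secondary technical point is checking that all the It\^o--Kunita applications are justified under the stated $\dbC^{0,k}_{\dbF}$ regularity together with the growth bounds (condition (b) of Proposition \ref{prop:b1} and its natural one-higher-derivative analogue), so that every stochastic integral that appears is a genuine $L^2$ martingale and the drift terms are integrable; these are routine but need the polynomial-growth hypotheses to be invoked carefully.
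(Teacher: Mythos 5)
Your overall architecture matches the paper's: the two displayed equalities follow from the HJB identity of Proposition \ref{prop:b1} together with the fact that along the optimal pair the inequality \eqref{3.28-eq1} becomes an equality (so $F(s,\overline X(s),\overline u(s))=0$); and the identities \eqref{th4.1-eq2} are obtained by differentiating in $x$, applying the It\^o--Kunita formula to $V_x(s,\overline X(s))$, and invoking uniqueness for the linear BSEE \eqref{ad-eq1} after matching the terminal condition $V_x(T,\overline X(T))=h_x(\overline X(T))=-p(T)$. That is exactly the paper's route for those steps.

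The one place where your argument has a genuine gap is the differentiation step. You propose to differentiate $\Gamma(t,x)=\langle A^*V_x,x\rangle_H+\inf_{u\in U}\dbH(\cdots)$ in $x$ and to kill the contribution of the $x$-dependent minimizer by an envelope/Danskin argument resting on $\partial_u\dbH=0$ at $\overline u(t)$. This condition is not available here: $U$ is only a closed bounded subset of a Hilbert space (not open, not assumed convex), so the minimizer can sit on the boundary of $U$ and no first-order condition in $u$ holds; moreover attainment and measurable selection of the minimizer are not established anywhere in the paper, and you correctly flag but do not resolve these issues. The paper avoids the problem entirely: since $\dbH(s,x,\overline u(s),\cdots)\geq\inf_{u\in U}\dbH(s,x,u,\cdots)$ for every $x$ with $\overline u(s)$ held \emph{fixed}, the HJB equation gives that the map $x\mapsto\langle A^*V_x(s,x),x\rangle_H+\dbH(s,x,\overline u(s),V_x(s,x),\Phi_x(s,x),V_{xx}(s,x))-\Gamma(s,x)$ is nonnegative on all of $H$, and by the first part it vanishes at $x=\overline X(s)$. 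Hence $\overline X(s)$ is a global minimum of a $C^1$ function of $x$ and its Fr\'echet derivative vanishes there; no differentiation of the infimum, no envelope theorem, and no interiority of $\overline u(s)$ is needed. Replacing your envelope step by this minimum-point observation closes the gap and the rest of your computation (the identity for $\Gamma_x(s,\overline X(s))$, the It\^o--Kunita expansion of $V_x(s,\overline X(s))$, and the comparison with \eqref{ad-eq1}) goes through as in the paper.
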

\begin{proof}
Fix $(s,x) \in [0,T] \times H$. Since $V$ satisfies the stochastic HJB equation \eqref{eqHJB} and admits the representation \eqref{eq611}, we deduce\vspace{-1mm}
$$\begin{aligned}
	\Gamma(s, x) &= \left\langle A^*V_x(s,x), x \right\rangle_H + \inf_{u \in U} \dbH(s, x, u, V_x(s, x), \Phi_x(s, x), V_{xx}(s, x)) \\
	&\leq \left\langle A^*V_x(s,x), x \right\rangle_H + \dbH(s, x, \overline{u}(s), V_x(s, x), \Phi_x(s, x), V_{xx}(s, x)).
\end{aligned}$$
Let $\overline{X}(\cdot) := \overline{X}(\cdot; 0, x, \overline{u}(\cdot))$. Then for all $s \in [0,T]$, we have\vspace{-1mm}
$$\begin{aligned}
	0 &= \left\langle A^*V_x(s,\overline{X}(s)), \overline{X}(s) \right\rangle_H + \dbH(s, \overline{X}(s), \overline{u}(s), V_x(s, \overline{X}(s)), \Phi_x(s, \overline{X}(s)), V_{xx}(s, \overline{X}(s))) - \Gamma(s, \overline{X}(s)) \\
	&\leq \left\langle A^*V_x(s,x), x \right\rangle_H + \dbH(s, x, \overline{u}(s), V_x(s, x), \Phi_x(s, x), V_{xx}(s, x)) - \Gamma(s, x).
\end{aligned}$$
By the additional assumptions on $V,\Gamma$ and $\Phi$, 
the first-order condition yields\vspace{-1mm}
$$\frac{\partial}{\partial x} \Big(\left\langle A^*V_x(s,x), x \right\rangle + \dbH(s, x, \overline{u}(s), V_x(s, x), \Phi_x(s, x), V_{xx}(s, x)) - \Gamma(s, x) \Big) \bigg|_{x = \overline{X}(s)} = 0.\vspace{-1mm}
$$
This implies the following identity:\vspace{-1mm}
$$\begin{aligned}
	0 &= A^*V_{xx}(s,\overline{X}(s))\overline{X}(s) + A^*V_x(s,\overline{X}(s)) \\
	&\quad + a_x(s,\overline{X}(s),\overline{u}(s))^*V_x(s,\overline{X}(s)) + V_{xx}(s,\overline{X}(s))a(s,\overline{X}(s),\overline{u}(s)) \\
	&\quad + b_x(s,\overline{X}(s),\overline{u}(s))^*\Phi_x(s,\overline{X}(s)) + \Phi_{xx}(s,\overline{X}(s))b(s,\overline{X}(s),\overline{u}(s)) \\
	&\quad + b_x(s,\overline{X}(s),\overline{u}(s))^*V_{xx}(s,\overline{X}(s))b(s,\overline{X}(s),\overline{u}(s)) \\
	&\quad + \frac{1}{2}\sum_{j=1}^{\infty} V_{xxx}(s,\overline{X}(s))(b(s,\overline{X}(s),\overline{u}(s))e_j, b(s,\overline{X}(s),\overline{u}(s))e_j) \\
	&\quad + f_x(s,\overline{X}(s),\overline{u}(s)) - \Gamma_x(s,\overline{X}(s)),
\end{aligned}\vspace{-1mm}
$$
where the derivative\vspace{-1mm}
$$\frac{\partial}{\partial x} \left\langle A^*V_x(s,x), x \right\rangle_H \bigg|_{x = \overline{X}(s)} = A^*V_{xx}(s,\overline{X}(s))\overline{X}(s) + A^*V_x(s,\overline{X}(s))\vspace{-1mm}
$$
follows from the assumption $A^*V_x\in \mathbb{C}_{\mathbb{F}}^{0,1}([0,T]\times H;H)$, with $\{e_j\}_{j=1}^{\infty}$ being an orthonormal basis of $H$.

By definition, the spatial derivative satisfies\vspace{-1mm}
$$V_x(t,x) = h_x(x) + \int_t^T \Gamma_x(s,x) ds - \int_t^T \Phi_x(s,x) dW(s), \quad t \in [0,T].\vspace{-1mm}
$$
Applying the It\^o-Kunita formula (see Lemma \ref{Ito-Kunita}) to $-V_x(s,\overline{X}(s))$, we obtain that\vspace{-1mm}
\begin{eqnarray}\label{eq:4.16}
	&& dV_x(s,\overline{X}(s)) \nonumber \\
	&& = \Big( -\Gamma_x(s, \overline{X}(s)) + A^* V_{xx}(s,\overline{X}(s))\overline{X}(s) + V_{xx}(s,\overline{X}(s))a(s,\overline{X}(s),\overline{u}(s)) \nonumber \\
	&& \quad + \frac{1}{2}\sum_{j=1}^{\infty} V_{xxx}(s,\overline{X}(s))(b(s,\overline{X}(s),\overline{u}(s))e_j, b(s,\overline{X}(s),\overline{u}(s))e_j) \nonumber \\
	&& \quad + \Phi_{xx}(s,\overline{X}(s))b(s,\overline{X}(s),\overline{u}(s)) \Big) ds \nonumber \\
	&& \quad + \left( V_{xx}(s,\overline{X}(s))b(s,\overline{X}(s),\overline{u}(s)) + \Phi_x(s, \overline{X}(s)) \right) dW(s), \nonumber \\
	&& = - \Big( A^*V_x(s,\overline{X}(s)) + a_x(s,\overline{X}(s),\overline{u}(s))^*V_x(s,\overline{X}(s)) + f_x(s,\overline{X}(s)),\overline{u}(s)) \nonumber \\
	&& \quad + b_x(s,\overline{X}(s),\overline{u}(s))^* \left[ V_{xx}(s,\overline{X}(s))b(s,\overline{X}(s),\overline{u}(s)) + \Phi_x(s,\overline{X}(s)) \right] \Big) ds \nonumber \\
	&& \quad + \left( V_{xx}(s,\overline{X}(s))b(s,\overline{X}(s),\overline{u}(s)) + \Phi_x(s, \overline{X}(s)) \right) dW(s). \nonumber
\end{eqnarray}
Since $h_x(\overline{X}(T)) = V_x(T,\overline{X}(T))$, the uniqueness of solutions to the BSEE \eqref{ad-eq1} establishes \eqref{th4.1-eq2}.  
\end{proof}

\subsection{Relationships between PMP and DPP: nonsmooth case}\label{ssec-nonsmooth}

In Subsection~\ref{ssec-smooth}, we established the connection between the
PMP and 
DPP under smoothness assumptions on the value function. However, in general settings--particularly for degenerate stochastic systems--the value function frequently fails to maintain such smoothness properties. This observation motivates the central objective of our current work: to weaken the regularity conditions imposed on the value function in Theorem~\ref{Th5}, thereby extending the applicability of these fundamental principles to more general cases.

\subsubsection{Differential in Spatial Variable}

Let  $v \in \mathbb{C}_{\mathbb{F}}^{0,0}([0,T]\times H)$. The second-order parabolic superdifferential of $v$ at $(t,\omega,x) \in [0,T) \times \Omega \times H$ is defined by\vspace{-2mm}
\begin{eqnarray*}
D_{x}^{2,+}v(t,\omega,x) \3n& \deq\3n & \Big\{(p,P)\in  H\times \mathcal{S}(H)\Big| \displaystyle\uplim\limits_{\begin{subarray}{1}
y\to \eta
\end{subarray}}\frac{1}{|x-y|_H^2}\\
& & \Big[v(t,\omega,y)-v(t,\omega,x)-\langle p,y-x\rangle_H-\frac{1}{2}\langle P(y-x),y-x\rangle_H\Big]\leq 0 \Big\}.\vspace{-1mm}
\end{eqnarray*}
Similarly, the second-order parabolic subdifferential of $v$ at $(t,\omega,x)$ is defined by\vspace{-2mm}
\begin{eqnarray*}
D_{x}^{2,-}v(t,\omega,x) \3n& \deq\3n & \Big\{(p,P)\in \times H\times \mathcal{S}(H)\Big| \displaystyle\lowlim\limits_{\begin{subarray}{1}
y\to \eta
\end{subarray}}\frac{1}{|x-y|_H^2}\\
& & \Big[v(t,\omega,y)-v(t,\omega,x)-\langle p,y-x\rangle_H-\frac{1}{2}\langle P(y-x),y-x\rangle_H\Big]\geq 0 \Big\}.
\end{eqnarray*}

For an $S\in \cS(H)$, we denote by\vspace{-1mm}
$$[S,\infty)\deq \{R\in \cS(H)|R-S \text{ is a nonnegative operator }\}$$
and\vspace{-1mm}
$$(-\infty,S]\deq \{R\in \cS(H)|S-R \text{ is a nonnegative operator }\}.$$
Then the following relationship holds between PMP and DPP in the spatial variable:
\begin{theorem}\label{th5.1}  Suppose Assumptions  {\rm ({\bf S1})--({\bf S3})} hold. For a fixed initial state $\eta \in H$, let $(\overline{X}(\cdot), \bar{u}(\cdot),p(\cdot)$, $q(\cdot), P(\cdot), Q^{(\cdot)}, \widehat{Q}^{(\cdot)})$ be an optimal $7$-tuple for \textbf{Problem (OP)}, with $V \in \mathbb{C}_{\mathbb{F}}^{0,0}([0,T]\times H)$ being the corresponding value function. Then the following differential inclusions hold:\vspace{-1mm}
\begin{equation}\label{th5.1-eq1}
\{-p(t,\omega)\} \times [-P(t,\omega), \infty) \subset D_x^{2,+}V(t,\omega,\overline{X}(t,\omega)), \quad \forall t \in [0,T],\ \mathbb{P}\text{-a.s.}
\end{equation}
and\vspace{-1mm}
\begin{equation}\label{Th71}
D_x^{2,-}V(t,\omega,\overline{X}(t,\omega)) \subset \{-p(t,\omega)\} \times (-\infty, -P(t,\omega)], \quad \forall t \in [0,T],\ \mathbb{P}\text{-a.s.}
\end{equation}
\end{theorem}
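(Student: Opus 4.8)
The plan is to establish \eqref{th5.1-eq1} directly from the dynamic programming principle (Theorem~\ref{thm-sdy1}) and the It\^o--Kunita expansion of the value function along perturbed trajectories, then obtain \eqref{Th71} by a symmetric argument combined with the first-order information already contained in the PMP. First I would fix $\omega$ in a full-measure set and a time $t\in[0,T)$, set $\bar x = \overline X(t,\omega)$, and for an arbitrary $R\in[-P(t,\omega),\infty)$ and arbitrary $y\in H$ near $\bar x$ I would run the state equation \eqref{system2} from the perturbed initial datum $y$ at time $t$ under the \emph{optimal} control $\bar u(\cdot)$, calling the resulting trajectory $X^y(\cdot)$ and writing $X^{\bar x}(\cdot)=\overline X(\cdot)$. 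The DPP gives $V(t,y)\le G^{t,y;\bar u}_{t,t+\delta}[V(t+\delta,X^y(t+\delta))]$ and, since $\bar u$ is optimal at $(t,\bar x)$, equality $V(t,\bar x)=G^{t,\bar x;\bar u}_{t,t+\delta}[V(t+\delta,\overline X(t+\delta))]$; subtracting these and using the variational estimate \eqref{eq:3.15-2} to control $X^y-\overline X$ in terms of $|y-\bar x|_H$, the difference $V(t,y)-V(t,\bar x)$ is bounded above by a BSEE-governed functional of $X^y(t+\delta)-\overline X(t+\delta)$.

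The heart of the matter is then a second-order Taylor expansion of this functional in $|y-\bar x|_H$. Here I would use the optimal $7$-tuple: the first-order adjoint $(p,q)$ produces the linear term $\langle -p(t),y-\bar x\rangle_H$ via the standard duality between \eqref{ad-eq1} and the first variation equation, and the relaxed transposition solution $(P,Q^{(\cdot)},\widehat Q^{(\cdot)})$ of \eqref{ad-eq2}, through the duality identity in Definition~\ref{def2.1} applied to the first and second variation processes $\varphi_1,\varphi_2$ (both driven by the perturbation $\xi_1=\xi_2=y-\bar x$, $u_j=v_j=0$), produces the quadratic term $\tfrac12\langle -P(t)(y-\bar x),y-\bar x\rangle_H$ plus a remainder that is $o(|y-\bar x|_H^2)$. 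Since $R\succeq -P(t,\omega)$, replacing $-P(t)$ by $R$ only makes the upper bound larger, so
$$
\uplim_{y\to \bar x}\frac{1}{|y-\bar x|_H^2}\Big[V(t,\omega,y)-V(t,\omega,\bar x)-\langle -p(t,\omega),y-\bar x\rangle_H-\tfrac12\langle R(y-\bar x),y-\bar x\rangle_H\Big]\le 0,
$$
which is exactly $(-p(t,\omega),R)\in D_x^{2,+}V(t,\omega,\bar x)$, giving \eqref{th5.1-eq1}.

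For \eqref{Th71} the strategy is dual: take any $(p',P')\in D_x^{2,-}V(t,\omega,\bar x)$ and expand the \emph{same} inequality chain, but now the subdifferential gives a \emph{lower} bound on $V(t,y)-V(t,\bar x)$ while the DPP, applied with $\bar u$ run from $\bar x$ and an $\varepsilon$-optimal completion from the perturbed state (Lemma~\ref{lem_approximate_optimal}), gives a matching \emph{upper} bound; comparing the $O(|y-\bar x|_H)$ terms forces $p'=-p(t,\omega)$, and comparing the $O(|y-\bar x|_H^2)$ terms forces $P'\preceq -P(t,\omega)$, i.e. $(p',P')\in\{-p(t,\omega)\}\times(-\infty,-P(t,\omega)]$. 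The main obstacle I anticipate is making the second-order expansion genuinely rigorous in infinite dimensions with only the \emph{relaxed} transposition solution available: one cannot write $\langle P(t)\varphi(t),\varphi(t)\rangle$ pointwise and differentiate naively, so the quadratic term must be extracted entirely through the integral duality identity of Definition~\ref{def2.1}, and the error terms (coming from the second-order remainders of $a,b,f,h$, controlled by the modulus $\bar\omega$ in \textbf{(S3)}, and from the $\delta$-dependence, which must be sent to $0$ \emph{after} $y\to\bar x$ or handled uniformly) require careful bookkeeping of the $L^4$--$L^{4/3}$ integrability pairing built into $\mathcal P[0,T]$ and $\mathcal Q[0,T]$.
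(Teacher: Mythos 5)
Your plan follows essentially the same route as the paper's proof: perturb the initial state, run the optimal control from the perturbed datum, extract the linear term from the first-order adjoint duality and the quadratic term from the relaxed-transposition duality of Definition \ref{def2.1}, control the remainders via \textbf{(S3)} and the $L^4$--$L^{4/3}$ pairing, and read off the subdifferential inclusion by comparing with the upper bound already obtained. The only differences are cosmetic: the paper feeds the actual deviation process into Definition \ref{def2.1} (with the Taylor remainders $\epsilon_{z,a},\epsilon_{z,b}$ playing the roles of the sources $u_j,v_j$) rather than the exactly linearized variations with $u_j=v_j=0$, and it secures a single exceptional null set valid for all perturbed states by first proving the expansion for a countable dense subset of $H$ and then extending via the continuity of $V(t,\omega,\cdot)$ and the uniformity of the $o(|z-\overline{X}(t)|_H^2)$ remainders --- a device you should make explicit when writing this up.
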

\begin{proof} 
Our proof adapts techniques from \cite{Yong1999,ChenLu} and is organized in six steps.

{\bf Step  1}.  Fix $t \in [0,T]$. For any initial state $z \in H$, consider the  SEE:\vspace{-1mm}
\begin{equation}\label{12.13-eq56}
\begin{cases}
\ds    dX^z(r)=\big(AX^z(r)+a(r,X^z(r),\bar{u}(r))\big)dr+b(r,X^z(r),\bar{u}(r))dW(r),\ \ \ \ r\in(t,T],\\
\ns\ds  X^z(t)=z.
\end{cases}\vspace{-1mm}
\end{equation}
Define the deviation process $\xi^z(r):=X^z(r)-\overline{X}(r)$. 

Interpreting \eqref{12.13-eq56} on the filtered probability space $(\Omega, \mathcal{F}, \mathbf{F}, \mathbb{P}(\cdot|\mathcal{F}_t)(\omega))$ for $\mathbb{P}$-a.e. $\omega$, we obtain the following continuous dependence estimate for any integer $k \geq 1$:  \vspace{-1mm}
\begin{equation}
\mathbb{E}\big(\sup\limits_{t \leq r \leq T} |\xi^z(r)|_H^{2k}\big|
\mathcal{F}_t\big)\leq \cC|z-\overline{X}(t)|_H^{2k},\q \mathbb{P}\mbox{-a.s.}\vspace{-1mm}
\end{equation}

The deviation process $\xi^z(\cdot)$ admits two distinct Taylor-type expansions. The first oeder one is \vspace{-1mm} 
\begin{equation}\label{12.13-eq58}
\!\begin{cases}
\ds  d\xi^z(r)\! =\! \big(A\xi^z(r)\!+\! \bar{a}_{x}(r)\xi^z(r)\big) dr\! + \! \bar{b}_{x}(r)\xi^z(r) dW(r)\! +\!\epsilon_{z,a}(r)dr\!+ \! \epsilon_{z,b}(r)dW(r),
\; r\in(t,T],\\
\ns\ds  \xi^z(t)=z-\overline{X}(t),
\end{cases}\vspace{-1mm}
\end{equation}
and the second-order one is\vspace{-1mm}
\begin{equation}\label{12.13-eq59}
\!\begin{cases}\ds
d\xi^z(r)=\(A\xi^z(r)+ \bar{a}_{x}(r)\xi^z(r)+\frac{1}{2}  \bar{a}_{xx}(r)\big(\xi^z(r),\xi^z(r)\big) \) dr\\
\ns\ds  \ \ \ \ \ \ \ \ \ \  \ + \(  \bar{b}_{x}(r) \xi^z(r)\!+\!\frac{1}{2} \bar{b}_{xx}(r)\big(\xi^z(r),\xi^z(r)\big)\)dW(r)\! + \!\tilde\epsilon_{z,a}(r)dr\!+ \! \tilde\epsilon_{z,b}(r)dW(r),
\; r\in(t,T],\\
\ns\ds  \xi^z(t)=z-\overline{X}(t),
\end{cases}\vspace{-1mm}
\end{equation}
where for $\f=a,b$\vspace{-1mm}
\begin{equation*}
\bar{\f}_{x}(r):=\f_{x}(r,\overline{X}(r),\bar{u}(r)),\q 
\bar{\f}_{xx}(r):=\f_{xx}(r,\overline{X}(r),\bar{u}(r)), \vspace{-1mm}
\end{equation*}
and\vspace{-3mm}
\begin{equation*}
\begin{cases}\ds
\epsilon_{z,\f}(r)=\int_0^1 \big(\f_{x}(r,\overline{X}(r)+\theta \xi^{z}(r),\bar{u}(r))-\bar{\f}_{x}(r)\big)\xi^{z}(r)d\theta, \\
\ns\ds  \wt \epsilon_{z,\f}(r)=\int_0^1 (1-\theta)   \big(\f_{xx}(r,\overline{X}(r)+\theta \xi^{z}(r),\bar{u}(r))-\bar{\f}_{xx}(r)\big)\big(\xi^{z}(r),\xi^{z}(r)\big) d\theta.
\end{cases}\vspace{-1mm}
\end{equation*}
\ss

{\bf Step  2}.  
In this step, we prove the existence of a deterministic, continuous, and strictly increasing function 
$\delta \colon [0,\infty) \to [0,\infty)$, independent of $z \in H$, with $\delta(r) = o(r)$ as $r \to 0^+$, 
such that for any integer $k \geq 1$, the following moment estimates hold: \vspace{-1mm}
\begin{equation}\label{12.13-eq62}
\mathbb{E}\(\int_t^T |\epsilon_{z,a}(r)|_H^{2k}dr\Big|\mathcal{F}_t\)(\omega)+\mathbb{E}\(\int_t^T |\epsilon_{z,b}(r)|_{\cL_2^0}^{2k}dr\Big|\mathcal{F}_t\)(\omega)\leq \delta \big(|z-\overline{X}(t,\omega)|_H^{2k}\big), \hspace{1.06cm} \mathbb{P}\mbox{-a.s. } \omega,\vspace{-1mm}
\end{equation}
\begin{equation}\label{12.13-eq63}
\mathbb{E}\(\int_t^T |\tilde\epsilon_{z,a}(r)|_H^{k}dr\Big|\mathcal{F}_t\)(\omega)+\mathbb{E}\(\int_t^T |\tilde\epsilon_{z,b}(r)|_{\cL_2^0}^{k}dr\Big|\mathcal{F}_t\)(\omega)\leq \delta \big(|z-\overline{X}(t,\omega)|_H^{2k}\big), \hspace{1.06cm} \mathbb{P}\mbox{-a.s. }\omega.\vspace{-1mm}
\end{equation}

For notational convenience, denote $\f_x(r,\theta) := \f_x(r,\overline{X}(r) + \theta \xi^z(r))$. Under Assumption ({\bf S3}), we establish the first moment estimate:\vspace{-1mm}
\begin{eqnarray*}
\mathbb{E}\(\int_t^T |\epsilon_{z,a}(r)|_H^{2k}dr\Big|\mathcal{F}_t\)
\3n  & \leq\3n &  \int_t^T \mathbb{E} \Big(\int_0^1 |a_{x}(r,\theta)-\bar{a}_{x}(r)|_{\mathcal{L}(H)}^{2k}d\theta |\xi^z (r)|_H^{2k}\Big|\mathcal{F}_t\Big)dr\\
& \leq\3n & \cC\int_t^T \mathbb{E} \big(|\xi^z (r)|_H^{4k}\big|\mathcal{F}_t\big)dr\leq \cC|z-\overline{X}(t,\omega)|_H^{4k}.
\end{eqnarray*}
This proves \eqref{12.13-eq62} with $\delta(x) = \cC x^2$.

For the second-order terms, denote $\f_{xx}(r,\theta) := \f_{xx}(r,\overline{X}(r) + \theta \xi^z(r))$ and compute \vspace{-1mm}
\begin{eqnarray*}
\mathbb{E}\(\int_t^T |\tilde\epsilon_{z,a}(r)|_H^{k}dr\Big|\mathcal{F}_t\)
&\3n \leq \3n&  \int_t^T \mathbb{E} \(\int_0^1 |a_{xx}(r,\theta)-\bar{a}_{xx}(r)|_{\mathcal{L}(H,H;H)}^{k}d\theta |\xi^z (r)|_H^{2k}\Big|\mathcal{F}_t\)dr\\
&\3n \leq \3n& \int_t^T \big\{\mathbb{E} \big[\bar{\omega}\big(|\xi^z (r)|_H\big)^{2k}\big|\mathcal{F}_t\big]\big\}^{1/2}\big( \mathbb{E} |\xi^z (r)|_H^{4k}\big|\mathcal{F}_t\big)^{1/2}dr\\
&\3n \leq \3n& \cC|z-\overline{X}(t,\omega)|_H^{2k}\int_t^T \big\{ \mathbb{E}^{t}[\bar{\omega}(|\xi^z (r)|_H)^{2k}\big|\mathcal{F}_t]\big\}^{1/2}dr.
\end{eqnarray*}
The modulus of continuity $\bar{\omega}$ ensures the existence of a suitable $\delta(\cdot)$ satisfying \eqref{12.13-eq63}. 

By taking the pointwise supremum over all such admissible functions, we obtain a maximal $\delta(\cdot)$ that simultaneously satisfies both \eqref{12.13-eq62} and \eqref{12.13-eq63}.

\ss

{\bf Step  3.} Define the first-order derivative $\bar{f}_x(r) := f_x(r,\overline{X}(r),\bar{u}(r))$. Using the adjoint equation \ref{ad-eq1}, we obtain that\vspace{-1mm}
\begin{eqnarray}\label{eq79}
& & \mathbb{E}\Big(\int_t^T \langle \bar{f}_{x}(r), \xi^z (r)\rangle_H dr+\langle h_{x}(\overline{X}(T)),\xi^z (T)\rangle_H \Big| \mathcal{F}_t \Big) \nonumber\\
& & =\langle -p(t),\xi^z (t)\rangle_H  -\frac{1}{2}\mathbb{E}\Big[\int_t^T \big(\langle p(r), \xi^z (r)^*\bar{a}_{xx}(r)\xi^z(r) \rangle_H  + \langle q(r), \xi^z (r)^*\bar{b}_{xx}(r)\xi^z(r) \rangle_{\mathcal{L}_2^0} \big)dr\nonumber\\
& & \ \ \ -\int_t^T \big(\langle p(r),\tilde\epsilon_{z,a}(r)\rangle_H + \langle q(r), \tilde\epsilon_{z,b}(r)\rangle_{\mathcal{L}_2^0} \big)dr\big|\mathcal{F}_t \Big], \ \ \mathbb{P}\mbox{-a.s.}
\end{eqnarray}
Working on the conditioned probability space $(\Omega, \mathcal{F}, \mathbf{F}, \mathbb{P}(\cdot|\mathcal{F}_t)(\omega))$ with conditional expectation $\mathbb{E}^t_\omega := \mathbb{E}(\cdot|\mathcal{F}_t)(\omega)$, Definition \ref{def2.1} yields\vspace{-1mm}
\begin{eqnarray}\label{eq710}
& & \mathbb{E}^t_\omega\Big(\int_t^T \langle \mathbb{H}_{xx}(r)\xi^z(r),\xi^z(r)\rangle_H dr-\langle h_{xx}(T)\xi^z(T),\xi^z(T)\rangle_H \Big)\nonumber\\
& & = \langle P(t)\xi^z(t),\xi^z(t)\rangle_H + \mathbb{E}^t_\omega \int_t^T \langle P(r)\epsilon_{z,a}(r),\xi^z(r)\rangle_H dr \nonumber \\
& &\ \ \  + \mathbb{E}^t_\omega \int_t^T \big(\langle P(r)\xi^z(r),\epsilon_{z,a}(r)\rangle_H + \langle P(r)\bar{b}_{x}(r)\xi^z(r),\epsilon_{z,b}\rangle_{\mathcal{L}_2^0}\big)dr \nonumber \\
& & \ \ \ +\mathbb{E}^t_\omega \int_t^T \langle P(r)\epsilon_{z,b}(r),\bar{b}_{x}(r)\xi^z(r)+\epsilon_{z,b}(r)\rangle_{\mathcal{L}_2^0}dr \\
& & \ \ \ +\mathbb{E}^t_\omega\int_t^T \big(\langle \epsilon_{z,b}(r),\hat{Q}^{(t)}(r)\rangle_{\mathcal{L}_2^0}+ \langle Q^{(t)}(r),
\epsilon_{z,b}(r)\rangle_{\mathcal{L}_2^0}\big)dr,\qquad \mathbb{P}\mbox{-a.s.}
\nonumber
\end{eqnarray}

\ss

{\bf Step  4}. In this step, we compute  $V(t,\omega,z)-V(t,\omega,\overline{X}(t,\omega))$.

Let $M$ be a countable dense subset of $H$. We calim that there exists a full-measure set $\Omega_0 \subset \Omega$ ($\mathbb{P}(\Omega_0)=1$) such that for all $\omega_0 \in \Omega_0$, it holds \vspace{-1mm}
\begin{equation*}
\begin{cases}\ds
V(t,\omega_0,\overline{X}(t,\omega_0))=\mathbb{E}\(\int_t^T f(r,\overline{X}(r),\bar{u}(r))dr+h(\overline{X}(T))\Big|\mathcal{F}_t\)(\omega_0), \\
\ns\ds    \textup{\eqref{12.13-eq56}, \eqref{12.13-eq62}--\eqref{eq710} hold for any} \ z\in M,\\
\ns\ds \sup \limits_{s\leq r\leq T}|p(r,\omega_0)|<+\infty,\\
\ns\ds      P(t,\omega_0)\in \mathcal{L}(H),\ P(\cdot,\omega_0)\xi\in L^2(r,T), \ \forall \xi \in L_{\mathcal{F}_r}^2(\Omega;H), \ \forall r\in [t,T].
\end{cases}\vspace{-1mm}
\end{equation*}

The first equality follows from Theorem \ref{thm-sdy1}, the second from the regularity $p \in L_{\mathbb{F}}^2(\Omega,C([0,T];H))$: \vspace{-1mm}
\begin{equation}
\mathbb{E}\sup_{0 \leq r \leq T} |p(r)|_H^2  < +\infty,\vspace{-1mm}
\end{equation}
and the third from $P(\cdot,\cdot) \in \mathcal{P}[0,T]$. 

For fixed $\omega_0 \in \Omega_0$ and $z \in M$, the value function difference satisfies \vspace{-1mm}
\begin{eqnarray}\label{eq712}
& & V(t,\omega_0,z)-V(t,\omega_0,\overline{X}(t,\omega_{0})) \nonumber\\
& & \leq \mathbb{E}^{t}_{\omega_0}\Big[\int_t^T \big(f(r,X^{z}(r),\bar{u}(r))-\bar{f}(r)\big)dr+h(^{z}(T))-h(\overline{X}(T))\Big]\nonumber\\
& & =\mathbb{E}^{t}_{\omega_0}\Big(\int_t^T \langle \bar{f}_{x}(r), \xi ^{z}(r)\rangle_H dr+\langle h_{x}(\overline{X}(T)),\xi^{z}(T)\rangle_H \Big)  \\
& & \ \ \  + \frac{1}{2}\mathbb{E}^{t}_{\omega_0} \Big(\int_t^T \langle \bar{f}_{xx}(r)\xi^{z}(r),\xi^{z}(r) \rangle_H dr+\langle h_{xx}(\overline{X}(T))\xi^{z}(T), \xi^{z}(T)\rangle_H \Big) +  o\big(|z-\overline{X}(t,\omega_{0})|_H^2\big). \nonumber 
\end{eqnarray}

Combining with \eqref{eq79}, \eqref{eq710} and the Hamiltonian definition yields \vspace{-1mm}
\begin{eqnarray}\label{eq713}
& &\3n\3n\3n V(t,\omega_0,z)-V(t,\omega_0,\overline{X}(t,\omega_0))\nonumber\\
& &\3n\3n\3n \leq \!-\langle p(t,\omega_0),\xi^z(t,\omega_0)\rangle_H \!-\! \frac{1}{2}\mathbb{E}^t_{\omega_0}\!\Big[\!\int_t^T\!\!\big(\lan p(r),  \bar{a}_{xx}(r)\big(\xi^z\!(r),\xi^z\!(r)\big) \ran_H \! + \!\lan q(r),  \bar{b}_{xx}(r)\big(\xi^z\!(r),\xi^z\!(r)\big) \ran_{\mathcal{L}_2^0} \big)dr\nonumber\\
& &   -\int_t^T \big(\langle p(r),\tilde \epsilon_{z,a}(r)\rangle_H + \langle q(r), \tilde \epsilon_{z,b}(r)\rangle_{\mathcal{L}_2^0} \big)dr\Big] \nonumber \\
& &   +\ \frac{1}{2}\mathbb{E}^{t}_{\omega_0} \Big(\int_t^T \langle \bar{f}_{xx}(r)\xi^{z}(r),\xi^{z}(r) \rangle_H dr+\langle h_{xx}(\overline{X}(T))\xi^{z}(T), \xi^{z}(T)\rangle_H \Big) + \ o(|z-\overline{X}(t,\omega_{0})|_H^2)\nonumber\\
& &\3n\3n\3n  = -\langle p(t,\omega_0),\xi^z(t,\omega_0)\rangle_H -\frac{1}{2}\mathbb{E}^t_{\omega_0}\Big(\int_t^T \langle \mathbb{H}_{xx}(r)\xi^z(r),\xi^z(r)\rangle_H dr - \langle h_{xx}(\overline X(T))\xi^z(T),\xi^z(T)\rangle_H \Big)\nonumber\\
& &  -\mathbb{E}^t_{\omega_0} \int_t^T \big(\langle p(r),\tilde \epsilon_{z,a}(r)\rangle_H + \langle q(r), \tilde \epsilon_{z,b}(r)\rangle_{\mathcal{L}_2^0} \big)dr + o(|z-\overline{X}(t,\omega_0)|_H^2)\\
& & \3n\3n\3n = - \langle p(t,\omega_0),\xi^z(t,\omega_0)\rangle_H - \frac{1}{2}\langle P(t,\omega_0)\xi^z(t,\omega_0),\xi^z(t,\omega_0)\rangle_H\nonumber\\
& &   -\mathbb{E}^t_{\omega_0} \int_t^T \big(\langle p(r),\tilde \epsilon_{z,a}(r)\rangle_H + \langle q(r), \tilde \epsilon_{z,b}(r)\rangle_{\mathcal{L}_2^0} \big)dr  -\frac{1}{2}\mathbb{E}^t_{\omega_0} \int_t^T \langle P(r)\epsilon_{z,a}(r),\xi^z(r)\rangle_H dr \nonumber\\
& & -\mathbb{E}^t_{\omega_0} \int_t^T \big(\langle P(r)\xi^z(r),\epsilon_{z,a}(r)\rangle_H + \langle P(r)\bar{b}_{x}(r)\xi^z(r),\epsilon_{z,b}(r)\rangle_{\mathcal{L}_2^0}\big)dr  \nonumber \\
& &  -\mathbb{E}^t_{\omega_0} \int_t^T \langle P(r)\epsilon_{z,b}(r),\bar{b}_{x}(r)\xi^z(r)+\epsilon_{z,b}(r)\rangle_{\mathcal{L}_2^0}dr \nonumber\\
& &  -\mathbb{E}^t_{\omega_0} \int_t^T \big(\langle \epsilon_{z,b}(r),\hat{Q}^{(t)}(r)\rangle_{\mathcal{L}_2^0}+ \langle Q^{(t)}(r), \epsilon_{z,b}(r)\rangle_{\mathcal{L}_2^0}\big)dr + \ o(|z-\overline{X}(t,\omega_0)|_H^2).\nonumber
\end{eqnarray}

\ss

{\bf Step  5}. In this step, we estimate the  remainder term in equality \eqref{eq713}. 

First, using the estimates \eqref{12.13-eq62} and \eqref{12.13-eq63}, we bound the integral terms involving the adjoint processes:\vspace{-4mm}
\begin{eqnarray}\label{eqpez}
& & \Big|\mathbb{E}^t_{\omega_0} \int_t^T \big( \langle p(r),\tilde \epsilon_{z,a}(r)\rangle_H + \langle q(r), \tilde \epsilon_{z,b}(r)\rangle_{\mathcal{L}_2^0} \big)dr\Big|\nonumber\\
& \leq \3n& \int_t^T \mathbb{E}^t_{\omega_0}\big(|p(r)|_H|\tilde \epsilon_{z,a}(r)|_H+|q(r)|_{\mathcal{L}_2^0}|\tilde \epsilon_{z,b}(r)|_{\mathcal{L}_2^0}\big)dr \\
& \leq \3n& |p |_{L^2_{\mathbb{F}}(\Omega,C([t,T];H))}\Big(\mathbb{E}^t_{\omega_0}\int_t^T |\tilde \epsilon_{z,a}(r)|_H^2dr\Big)^{1/2} + |q |_{L_{\mathbb{F}}^2(0,T;\mathcal{L}_2^0)} \Big(\mathbb{E}^t_{\omega_0}\int_t^T |\tilde \epsilon_{z,b}(r)|_{\mathcal{L}_2^0}^2dr\Big)^{1/2}\nonumber\\
& \leq\3n & \cC\delta(|z-\overline{X}(t,\omega_0)|_H^2)= o(|z-\overline{X}(t,\omega_0)|_H^2).\nonumber
\end{eqnarray}
Next, applying Proposition \ref{prp2.1} and the regularity $P \in \mathcal{P}[0,T]$, we get that \vspace{-2mm}
\begin{eqnarray}\label{eqpez1}
& &  \Big|\mathbb{E}^t_{\omega_0} \int_t^T \langle P(r)\epsilon_{z,a}(r), \xi^z(r)\rangle_H dr\Big| \nonumber\\
& &  \leq\Big[\int_t^T \big(\mathbb{E}^t_{\omega_0}|P(r)\epsilon_{z,a}(r)|_{H}^{4/3}\big)^{3/2}dr\Big]^{1/2}\Big(\int_t^T \big(\mathbb{E}^t_{\omega_0}|\xi^z(r)|_{H}^4\big)^{1/2}dr\Big)^{1/2}
\\
& & \leq |P |_{\mathcal{L}(L_\mathbb{F}^2(0,T;L^4(\Omega;H));L_{\mathbb{F}}^2(0,T;L^{4/3}(\Omega;H)))}\Big(\mathbb{E}^t_{\omega_0} \int_t^T |\epsilon_{z,a}(r)|_H^4dr \Big)^{1/4} \Big(\sup\limits_{t\leq r\leq T}\mathbb{E}^t_{\omega_0} |\xi^z(r)|_H^4 \Big)^{1/4}\nonumber\\
& &  =  o(|z-\overline{X}(t,\omega_0)|_H)O(|z-\overline{X}(t,\omega_0 )|_H)= o(|z-\overline{X}(t,\omega_0)|_H^2).\nonumber
\end{eqnarray}
Similarly, we obtain\vspace{-2mm}
\begin{eqnarray}\label{eqpez2}
& &  \Big|\mathbb{E}^t_{\omega_0} \int_t^T \lan P(r)\xi^z(r),\epsilon_{z,a}(r)\ran_H dr \Big|= o\big(|z-\overline{X}(t,\omega_0)|_H^2\big).
\end{eqnarray}
Using the boundedness of $\bar{b}_x$ and $P \in \mathcal{P}[0,T]$, we get that
\begin{eqnarray}\label{eqpez3}
& & \3n\3n  \Big|\mathbb{E}^t_{\omega_0}\int_t^T \langle P(r)\bar{b}_x(r)\xi^z(r),\epsilon_{z,b}(r)\rangle_{\mathcal{L}_2^0} dr\Big|\nonumber\\
& &\3n\3n \leq \Big[\int_t^T \big(\mathbb{E}^t_{\omega_0}|P(r)\bar{b}_x(r)\xi^z(r)|_{\mathcal{L}_2^0}^{4/3}\big)^{3/2}dr\Big]^{1/2}\Big(\int_t^T \big(\mathbb{E}^t_{\omega_0}|\epsilon_{z,b}(r)|_{\mathcal{L}_2^0}^4\big)^{1/2}dr\Big)^{1/2}\nonumber\\
& & \3n\3n\leq\! |P |_{\mathcal{L}(L_\mathbb{F}^2(0,T;L^4(\Omega;H));L_{\mathbb{F}}^2(0,T;L^{4/3}(\Omega;H)))} |\bar{b}_x \xi^z |_{L_\mathbb{F}^2(0,T;L^4(\Omega;\mathcal{L}_2^0))}\Big[\!\int_t^T\!\!\big(\mathbb{E}^t_{\omega_0}|\epsilon_{z,b}(r)|_{\mathcal{L}_2^0}^4\big)^{1/2}dr\Big]^{1/2}\nonumber\\
& &\3n\3n \leq \cC|\xi^z |_{L_\mathbb{F}^2(0,T;L^4(\Omega;H))}\Big(\int_t^T \big(\mathbb{E}^t_{\omega_0}|\epsilon_{z,b}(r)|_{\mathcal{L}_2^0}^4\big)^{1/2}dr\Big)^{1/2} \\
& &\3n\3n \leq \cC(|z-\overline{X}(t,\omega_0)|_H^4)^{1/4}\delta(|z-\overline{X}(t,\omega_0)|_H^4)^{1/4}= o(|z-\overline{X}(t,\omega_0)|_H^2).\nonumber
\end{eqnarray}
The remaining terms are estimated similarly:\vspace{-1mm}
\begin{eqnarray}\label{eqpez4}
& &  \Big|\mathbb{E}^t_{\omega_0} \int_t^T \langle P(r)\epsilon_{z,b}(r),\bar{b}_x(r)\xi^z(r)+\epsilon_{z,b}(r)\rangle_{\mathcal{L}_2^0} dr\Big|=  o(|z-\overline{X}(t,\omega_0)|_H^2).
\end{eqnarray}
Finally, using \eqref{12.13-eq62} and the properties of the relaxed transposition solution $\widehat{Q}^{(t)}$ to \eqref{ad-eq2}, we obtain \vspace{-1mm}
\begin{eqnarray}\label{eqpez5}
& &  \Big|\mathbb{E}^t_{\omega_0} \int_t^T \langle \epsilon_{z,b}(r), \hat{Q}^{(t)}(r)\rangle_{\mathcal{L}_2^0}dr\Big|\nonumber\\
& & \leq \big|\hat{Q}^{(t)}(0,0,\epsilon_{z,b}(\cdot))\big|_{L_\mathbb{F}^2(0,T;L^{4/3}(\Omega;\mathcal{L}_2^0))}\Big[\mathbb{E}^t_{\omega_0}\(\int_t^T |\epsilon_{z,b}(r)|_{\mathcal{L}_2^0}^4dr\)^{1/2}\Big]^{1/2}\nonumber\\
& & \leq \cC|\epsilon_{z,b} |_{L_\mathbb{F}^2(0,T;L^4(\Omega,\mathcal{L}_2^0))} \Big[\mathbb{E}^t_{\omega_0}\(\int_t^T |\epsilon_{z,b}(r)|_{\mathcal{L}_2^0}^4dr\)^{1/2}\Big]^{1/2}\\
& & \leq \cC \Big[\mathbb{E}^t_{\omega_0}\(\int_t^T |\epsilon_{z,b}(r)|_{\mathcal{L}_2^0}^4dr\)\Big]^{1/2}\leq \big(\delta(|z-\overline{X}(t,\omega_0)|_H^4)\big)^{1/2}\nonumber\\
& & =o\big(|z-\overline{X}(t,\omega_0)|_H^2\big).\nonumber
\end{eqnarray}
A similar argument implies
\begin{eqnarray}\label{eqpez6}
\Big|\mathbb{E}^t_{\omega_0} \int_t^T \langle Q^{(t)}(r), \epsilon_{z,b}(r)\rangle_{\mathcal{
L}_2^0}dr\Big|=o\big(|z-\overline{X}(t,\omega_0)|_H^2\big).
\end{eqnarray}

\ss

{\bf Step  6}. We complete the proof in this step.

Building upon the previous steps, we have established the following inequality for the value function:
\begin{eqnarray}\label{12.13-eq75}
& &\3n\3n\3n V(t,\omega_0,z)-V(t,\omega_0,\overline{X}(t,\omega_0))\nonumber\\
& &\3n\3n\3n \leq - \langle p(t,\omega_0),\xi^z(t,\omega_0)\rangle_H - \frac{1}{2}\langle P(t,\omega_0)\xi^z(t,\omega_0),\xi^z(t,\omega_0)\rangle_H +  o(|z-\overline{X}(t,\omega_0)|_H^2).
\end{eqnarray}
From \eqref{eqpez}--\eqref{eqpez6}, it follows that all remainder terms are of order $o(|z - \overline{X}(t,\omega_0)|_H^2)$  uniformly in $z$. Together with the continuity of $V(t,\omega_0,\cdot)$, this implies that inequality \eqref{12.13-eq75} can be extended to all $z \in H$. Consequently, we get the superdifferential inclusion: \vspace{-1mm}
$$
(-p(t,\omega_0),-P(t,\omega_0))\in D_x^{2,+}V(t,\omega_0,\overline{X}(t,\omega_0)),
$$
which proves the first claim \eqref{th5.1-eq1} by definition of the parabolic superdifferential.

To establish the subdifferential inclusion, fix $\omega_0 \in \Omega_0$ such that \eqref{12.13-eq75} holds for all $z \in H$. For any $(\widetilde{p}, \widetilde{P}) \in D_x^{2,-}V(t,\omega_0,\overline{X}(t,\omega_0))$, the definition of the subdifferential gives 
\begin{eqnarray*}
0 \3n&  \leq \3n & \liminf_{z\to \overline{X}(t,\omega_0)}\frac{V(t,\omega_0,z)\!-\!V(t,\omega_0,\overline{X}(t,\omega_0))\!-\!\langle \wt p,z\!-\!\overline{X}(t,\omega_0)\rangle_H\! -\!\frac{1}{2}\langle \wt P(z\!-\overline{X}(t,\omega_0)),z\!-\overline{X}(t,\omega_0)\rangle_H}{|z-\overline{X}(t,\omega_0)|_H^2}\\
& \leq\3n & \liminf_{z\to \overline{X}(t,\omega_0)}\frac{-\langle \wt p+p(t),z-\overline{X}(t,\omega_0)\rangle_H -\frac{1}{2}\langle (\wt P+P(t,\omega_0))(z-\overline{X}(t,\omega_0)),z-\overline{X}(t,\omega_0)\rangle_H}{|z-\overline{X}(t,\omega_0)|_H^2},
\end{eqnarray*}
which implies\vspace{-2mm}
$$\wt p=-p(t),\hspace{1cm} \wt P\leq P(t,\omega_0),$$
thus completing the proof of the second inclusion \eqref{Th71} and establishing Theorem \ref{th5.1} in full. 
\end{proof} 

\subsubsection{Differentials in the time variable}\label{sec-nonsmooth2}

In this section, we study the superdifferential and subdifferential of the value function in the time variable along an optimal trajectory. 

First, we introduce the following definitions for   the super- and subdifferential of $v\in \mathbb{C}_{\mathbb{F}}^{0,0}([0,T]\times H)$ with respect to time variable $t$. Fix $(t,\omega,x) \in (0,T) \times \Omega \times H$:
\begin{eqnarray*}
D_{t,+}^{1,+}v(t,\omega,x) \3n& \deq\3n & \Big\{r\in \dbR\Big| \displaystyle\uplim\limits_{\begin{subarray}{1}
s\to t,s\in (t,T)
\end{subarray}}\frac{1}{|t-s|}\Big[\mathbb{E}_{\omega}^t[v(s,x)-v(t,\omega,x)]-r(s-t)\Big]\leq 0 \Big\},
\end{eqnarray*}
\begin{eqnarray*}
D_{t,+}^{1,-}v(t,\omega,x) \3n& \deq\3n & \Big\{r\in \dbR\Big| \displaystyle\lowlim\limits_{\begin{subarray}{1}
s\to t,s\in (t,T)
\end{subarray}}\frac{1}{|t-s|} \Big[\mathbb{E}_{\omega}^t[v(s,x)-v(t,\omega,x)]-r(s-t)\Big]\geq 0 \Big\}.
\end{eqnarray*}

Then, the relationship between PMP and DPP with respect to the time variable is given as follows:

\begin{theorem}\label{th6.1} 
Under the assumptions of Theorem \ref{th5.1}, for almost every $t \in [0,T]$ satisfying either $\overline{X}(t) \in D(A)$ or $p(t) \in D(A^*)$, the following temporal superdifferential inclusion holds:\vspace{-1mm}
$$\langle\!\langle  A\overline{X}(t,\omega),p(t,\omega)\rangle\!\rangle + \mathcal{H}(t,\overline{X}(t,\omega),\bar{u}(t,\omega))\in D_{t+}^{1,+}V(t,\omega,\overline{X}(t,\omega)),\quad \mathbb{P}\mbox{-a.s.}\vspace{-1mm}$$
where the duality pairing $\langle\!\langle \cdot, \cdot \rangle\!\rangle$ is defined by\vspace{-1mm}
$$\langle\!\langle  A\overline{X}(t),p(t)\rangle\!\rangle = \begin{cases}\langle  A\overline{X}(t),p(t)\rangle_H, & \mbox{ if }\;\overline{X}(t)\in D(A),\\
\ns\ds \langle \overline{X}(t), A^*p(t)\rangle_H, & \mbox{ if }\;p(t)\in D(A^*),
\end{cases}\vspace{-1mm}
$$
and
$\mathcal{H}(t,x,u)\deq -\mathbb{H}(t,x,u,-p(t),-q(t),P(t))$, with $\mathbb{H}$ defined in \eqref{eq:4.1}.
\end{theorem}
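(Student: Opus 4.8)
The plan is as follows. Write $\mathcal{K}(t,\omega):=\langle\!\langle A\overline{X}(t,\omega),p(t,\omega)\rangle\!\rangle+\mathcal{H}(t,\overline{X}(t,\omega),\bar{u}(t,\omega))$; by the definition of $D^{1,+}_{t,+}$ it suffices to show that, for a.e.\ $t$ with $\overline{X}(t)\in D(A)$ or $p(t)\in D(A^*)$ and $\mathbb{P}$-a.s.,
$$
\limsup_{s\downarrow t}\frac{1}{s-t}\Big(\mathbb{E}^t_\omega\big[V(s,\overline{X}(t,\omega))\big]-V(t,\omega,\overline{X}(t,\omega))\Big)\le\mathcal{K}(t,\omega).
$$
Since $(\overline{X}(\cdot),\bar{u}(\cdot))$ is optimal, a standard optimality--principle argument based on Theorem~\ref{thm-sdy1} shows that $\bar{u}|_{[r,T]}$ is optimal for \textbf{Problem (OP)}$_{r,\overline{X}(r)}$ for every $r$, so by \eqref{eq:value_rep} the process $r\mapsto V(r,\overline{X}(r))+\int_0^r f(\tau,\overline{X}(\tau),\bar{u}(\tau))\,d\tau$ is an $\mathbf{F}$-martingale; hence $V(t,\overline{X}(t))=\mathbb{E}^t\big[\int_t^s f(\tau,\overline{X}(\tau),\bar{u}(\tau))\,d\tau+V(s,\overline{X}(s))\big]$ and therefore
$$
\mathbb{E}^t\big[V(s,\overline{X}(t))\big]-V(t,\overline{X}(t))=\mathbb{E}^t\big[V(s,\overline{X}(t))-V(s,\overline{X}(s))\big]-\mathbb{E}^t\!\int_t^s f(\tau,\overline{X}(\tau),\bar{u}(\tau))\,d\tau .
$$
By the Lebesgue differentiation theorem, $\frac{1}{s-t}\mathbb{E}^t\int_t^s f(\tau,\overline{X}(\tau),\bar{u}(\tau))\,d\tau\to f(t,\overline{X}(t),\bar{u}(t))$ for a.e.\ $t$, $\mathbb{P}$-a.s.\ (note $f(t,\overline{X}(t),\bar{u}(t))$ is $\mathcal{F}_t$-measurable).

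To control $\mathbb{E}^t[V(s,\overline{X}(t))-V(s,\overline{X}(s))]$ I would re-run, on $[s,T]$ and with the $\mathcal{F}_s$-measurable perturbation $\overline{X}(t)-\overline{X}(s)\in L^4_{\mathcal{F}_s}(\Omega;H)$ in place of $z-\overline{X}(t)$, the expansion in Steps 1--5 of the proof of Theorem~\ref{th5.1}; by uniqueness the adjoint data for \textbf{Problem (OP)}$_{s,\overline{X}(s)}$ are just the restrictions of $(p,q,P,Q^{(\cdot)},\widehat{Q}^{(\cdot)})$ to $[s,T]$. Combining the DPP \eqref{eq:3.14} with optimality of $\bar{u}$ at $(s,\overline{X}(s))$, the first-order adjoint equation \eqref{ad-eq1}, the relaxed transposition solution of \eqref{ad-eq2} (Definition~\ref{def2.1}), and the remainder estimates \eqref{12.13-eq62}--\eqref{12.13-eq63}, \eqref{eqpez}--\eqref{eqpez6}, one obtains, $\mathbb{P}$-a.s.,
$$
V(s,\overline{X}(t))-V(s,\overline{X}(s))\le-\langle p(s),\overline{X}(t)-\overline{X}(s)\rangle_H-\tfrac12\big\langle P(s)(\overline{X}(t)-\overline{X}(s)),\overline{X}(t)-\overline{X}(s)\big\rangle_H+\mathbf{R}(s),
$$
where $|\mathbf{R}(s)|\le\rho_0(|\overline{X}(t)-\overline{X}(s)|_H)$ for a \emph{deterministic} modulus $\rho_0$ with $\rho_0(\rho)=o(\rho^2)$. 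Using the higher-moment bounds $\mathbb{E}^t|\overline{X}(s)-\overline{X}(t)|_H^{2k}\le\cC_k(s-t)^k$ (valid when $\overline{X}(t)\in D(A)$; in the case $p(t)\in D(A^*)$, $\overline{X}(t)\notin D(A)$ one keeps the unbounded operator on the $p$-side through $\langle\!\langle\cdot,\cdot\rangle\!\rangle$) and a cut-off argument at $\{|\overline{X}(t)-\overline{X}(s)|\le(s-t)^{1/4}\}$, one upgrades this to $\mathbb{E}^t|\mathbf{R}(s)|\le\mathbb{E}^t\rho_0(|\overline{X}(t)-\overline{X}(s)|_H)=o(s-t)$.

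It then remains to pass to the limit in the two leading terms. For the linear term I would avoid expanding $\overline{X}(s)-\overline{X}(t)$ through its mild representation and instead use the integration-by-parts identity between the state equation \eqref{system2} and the first-order adjoint \eqref{ad-eq1} (the duality underlying \eqref{eq79}), which gives, for a.e.\ $t$,
$$
\lim_{s\downarrow t}\frac{1}{s-t}\mathbb{E}^t\langle p(s),\overline{X}(s)-\overline{X}(t)\rangle_H=\langle\!\langle A\overline{X}(t),p(t)\rangle\!\rangle+\langle p(t),a(t,\overline{X}(t),\bar{u}(t))\rangle_H+\langle q(t),b(t,\overline{X}(t),\bar{u}(t))\rangle_{\mathcal{L}_2^0},
$$
the cross term $\langle q(t),b(t)\rangle_{\mathcal{L}_2^0}$ coming from the covariation of the martingale parts of $p$ and $\overline{X}$ (It\^o isometry), and $\langle\!\langle A\overline{X}(t),p(t)\rangle\!\rangle$ being exactly where the domain hypothesis enters, via $\frac{1}{s-t}\langle p(t),(S(s-t)-I)\overline{X}(t)\rangle_H=\frac{1}{s-t}\langle(S(s-t)^*-I)p(t),\overline{X}(t)\rangle_H\to\langle\!\langle A\overline{X}(t),p(t)\rangle\!\rangle$. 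For the quadratic term, using $P\in\mathcal{P}[0,T]$ and a conditional It\^o-isometry argument (the non-martingale parts of $\overline{X}(s)-\overline{X}(t)$ contributing at order $o(s-t)$),
$$
\lim_{s\downarrow t}\frac{1}{s-t}\mathbb{E}^t\big\langle P(s)(\overline{X}(s)-\overline{X}(t)),\overline{X}(s)-\overline{X}(t)\big\rangle_H=\big\langle P(t)b(t,\overline{X}(t),\bar{u}(t)),b(t,\overline{X}(t),\bar{u}(t))\big\rangle_{\mathcal{L}_2^0}.
$$
Assembling the three contributions, the $\limsup$ above is bounded by $\langle\!\langle A\overline{X}(t),p(t)\rangle\!\rangle+\langle p(t),a(t)\rangle_H+\langle q(t),b(t)\rangle_{\mathcal{L}_2^0}-\tfrac12\langle P(t)b(t),b(t)\rangle_{\mathcal{L}_2^0}-f(t,\overline{X}(t),\bar{u}(t))$, which by the definition \eqref{eq:4.1} of $\mathbb{H}$ equals $\langle\!\langle A\overline{X}(t),p(t)\rangle\!\rangle-\mathbb{H}(t,\overline{X}(t),\bar{u}(t),-p(t),-q(t),P(t))=\mathcal{K}(t)$. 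A Fubini argument reconciles the ``a.e.\ $t$'' (Lebesgue points) with the $\mathbb{P}$-a.s.\ exceptional sets.

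The main obstacle is the remainder control in the second step: the inclusion \eqref{th5.1-eq1} by itself only yields a bound $o(|\overline{X}(t)-\overline{X}(s)|_H^2)$ with a modulus that is random and not uniform in $s$, whereas here one needs the sharper order $o(s-t)$; this forces reopening the estimates \eqref{12.13-eq62}--\eqref{12.13-eq63} to extract a deterministic modulus and then combining it with higher-moment control of the state increment $\overline{X}(s)-\overline{X}(t)$. The secondary difficulty is the $\langle\!\langle A\overline{X}(t),p(t)\rangle\!\rangle$ term when $\overline{X}(t)\notin D(A)$ but $p(t)\in D(A^*)$, where the unbounded operator must be transferred onto $p$ at every step of the computation.
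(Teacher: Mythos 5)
Your proposal is correct and follows essentially the same route as the paper's proof: the paper likewise reduces the temporal increment to the running cost over $[t,s]$ plus a spatial second-order expansion at the later time (via the auxiliary process $X_\tau$ started from $\overline{X}(t)$ at time $\tau$, whose cost expansion around $(\overline{X},\bar u)$ on $[\tau,T]$ is exactly your ``re-run of Theorem \ref{th5.1} with perturbation $\overline{X}(t)-\overline{X}(\tau)$''), and then computes the same three limits --- the semigroup term $(S(\tau-t)-I)\overline{X}(t)$ paired with $p(t)$ under the domain hypothesis, the drift and covariation terms giving $\langle p,a\rangle+\langle q,b\rangle$, and the conditional It\^o isometry giving $\langle P(t)b,b\rangle$ --- with the remainders controlled by the deterministic modulus $\delta$ and conditional moment bounds of order $|\tau-t|^k$. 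The only organizational differences (your explicit martingale framing of $V(r,\overline{X}(r))+\int_0^r f$ versus the paper's direct suboptimality inequality $V(\tau,\overline{X}(t))\le\mathcal J(\tau,\overline{X}(t);\bar u)$, and your cut-off argument for $\mathbb{E}^t\rho_0(|\overline{X}(s)-\overline{X}(t)|)$ versus the paper's direct conditional-moment bounds on the error terms) are cosmetic.
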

\begin{proof} 
Fix $t \in (0,T)$ and let $\tau \in (t,T]$. Consider the solution $X_\tau$ to the SEE on $[\tau,T]$: \vspace{-1mm}
\begin{equation}
\begin{cases}
\ds   dX_{\tau}(r)=AX_{\tau}(r)dr+a(r,X_{\tau}(r),\bar{u}(r))dr+b(r,X_{\tau}(r),\bar{u}(r))dW(r),\ \ r\in (\tau, T],\\
\ns\ds X_{\tau}(\tau)=\overline{X}(t).
\end{cases}\vspace{-1mm}
\end{equation}
Define the deviation process $\xi_{\tau}(r)=X_{\tau}(r)-\overline{X}(r)$ for $r\in[\tau,T]$.
Working under the conditional measure $\mathbb{P}(\cdot|\mathcal{F}_\tau)(\omega)$ for $\mathbb{P}\text{-a.e. } \omega$, we obtain that\vspace{-1mm}
\begin{equation}
\mathbb{E}\(\sup\limits_{\tau \leq r \leq T} |\xi_{\tau}(r)|_H^{2k}| \mathcal{F}_{\tau}\)\leq \cC\big|\overline{X}(\tau)-\overline{X}(t)\big|_H^{2k} ,\ \ \ \mathbb{P}\mbox{-a.s.}\vspace{-1mm}
\end{equation}
Taking conditional expectation $\mathbb{E}(\cdot|\mathcal{F}_t)$ and using the filtration property $\mathcal{F}_t \subset \mathcal{F}_\tau$ yields \vspace{-1mm}
\begin{equation}\label{12.13-eq6.25}
\mathbb{E}\(\sup\limits_{\tau \leq r \leq T} |\xi_{\tau}(r)|_H^{2k}| \mathcal{F}_t\)\leq \cC|\tau-t|^k\big(|A\overline{X}(t)|_H+1+|\overline{X}(t)|_H\big)^k\leq \cC|\tau-t|^k,\ \ \ \mathbb{P}\mbox{-a.s.}\vspace{-1mm}
\end{equation}

The deviation process satisfies two Taylor expansions analogous to \eqref{12.13-eq58} and \eqref{12.13-eq59}:

The first-order counterpart is\vspace{-1mm}
\begin{eqnarray}\label{12.14-eq1}
\3n\!\!\begin{cases}
d\xi_{\tau}(r)\! =\! \big(A\xi_{\tau}(r)\!+\! \bar{a}_{x}(r)\xi_{\tau}(r)\big) dr\! +\!  \bar{b}_{x}(r) \xi_{\tau}(r) dW\!(r) \!+\!\epsilon_{\tau,a}(r)dr\!+ \! \epsilon_{\tau,b}(r)dW\!(r),
\,\; r\in(\tau,T],\\
\ns\ds  \xi_{\tau}(\tau)=-[S(\tau-t)-I]\overline{X}(t)-\int_t^{\tau} S(\tau-r)\bar{a}(r)dr-\int_t^{\tau} S(\tau-r)\bar{b}(r)dW(r),
\end{cases}
\end{eqnarray}
and the second-order counterpart is\vspace{-1mm}
\begin{eqnarray}\label{12.14-eq2}
\3n\!\begin{cases}\ds
d\xi_{\tau}(r)=\(A\xi_{\tau}(r)+ \bar{a}_{x}(r)\xi_{\tau}(r)+\frac{1}{2}  \bar{a}_{xx}(r)\big(\xi_{\tau}(r),\xi_{\tau}(r) \big)  \) dr\\
\ns\ds \ \ \ \ \ \ \ \ \ \ \   +  \(  \bar{b}_{x}(r) \xi_{\tau}(r)\!+\!\frac{1}{2} \bar{b}_{xx}(r)\big(\xi_{\tau}(r),\xi_{\tau}(r) \big)\)dW\!(r)\! +\!   \tilde\epsilon_{\tau,a}(r)d\!+\!  \tilde\epsilon_{\tau,b}(r)dW\!(r), 
\,\;r\in(\tau,T],\\
\ns\ds  \xi_{\tau}(\tau)=-[S(\tau-t)-I]\overline{X}(t)-\int_t^{\tau} S(\tau-r)\bar{a}(r)dr-\int_t^{\tau} S(\tau-r)\bar{b}(r)dW(r).
\end{cases}
\end{eqnarray}
Here, for $\f = a,b$, the remainder terms are given by\vspace{-1mm}
\begin{equation}\label{12.13-eq6.29}
\begin{cases}\ds
\epsilon_{\tau,\f}(r)=\int_0^1 \big(\f_{x}(r,\overline{X}(r)+\theta \xi_{\tau}(r),\bar{u}(r))-\bar{\f}_{x}(r)\big)\xi_{\tau}(r)d\theta, \\
\ns\ds  \tilde \epsilon_{\tau,\f}(r)=\int_0^1 (1-\theta)\xi_{\tau}(r)^*  \big(\f_{xx}(r,\overline{X}(r)+\theta \xi_{\tau}(r),\bar{u}(r))-\bar{\f}_{xx}(r)\big)\xi_{\tau}(r) d\theta.
\end{cases}\vspace{-1mm}
\end{equation}
Similar to \eqref{12.13-eq62} and \eqref{12.13-eq63}, the following inequalities hold for any $k\ge 1$,\vspace{-1mm}
\begin{equation}
\begin{cases}\ds
\mathbb{E}\(\int_{\tau}^T |\epsilon_{\tau,a}(r)|_H^{2k}dr|\mathcal{F}_t \)+\mathbb{E}\(\int_{\tau}^T |\epsilon_{\tau,b}(r)|_{\cL_2^0}^{2k}dr|\mathcal{F}_t \)  \leq \delta (|\tau-t|^k), \qquad \mathbb{P}\mbox{-a.s.},\\
\ns\ds    \mathbb{E}\(\int_{\tau}^T |\tilde \epsilon_{\tau,a}(r)|_H^{k}dr|\mathcal{F}_t \)+\mathbb{E}\(\int_{\tau}^T |\tilde \epsilon_{\tau,b}(r)|_{\cL_2^0}^{k}dr|\mathcal{F}_t \) \leq \delta (|\tau-t|^k), \qquad \mathbb{P}\mbox{-a.s.}, 
\end{cases}\vspace{-1mm}
\end{equation}
where $\delta:[0,\infty)\to [0,\infty)$ is a deterministic modulus of continuity satisfying $\delta(r)/r \to 0$ as $r \to 0^+$. 

The value function $V$ satisfies the fundamental inequality:\vspace{-1mm}
\begin{equation}\label{12.13-eq83}
V(\tau,\overline{X}(t))\leq \mathbb{E}\Big(\int_{\tau}^T f(r,X_{\tau}(r),\bar{u}(r))dr+h(X_{\tau}(T))\Big|\mathcal{F}_{\tau} \Big),\ \ \ \mathbb{P}\mbox{-a.s.}\vspace{-1mm}
\end{equation}
Applying the tower property of conditional expectation yields\vspace{-1mm}
\begin{equation}\label{12.13-eq6.31}
\mathbb{E}(V(\tau,\overline{X}(t))|\cF_t)\leq \mathbb{E}\Big(\int_{\tau}^T f(r,X_{\tau}(r),\bar{u}(r))dr+h(X_{\tau}(T))\Big|\mathcal{F}_t \Big),\ \ \ \mathbb{P}\mbox{-a.s.}\vspace{-1mm}
\end{equation}
From inequality \eqref{eq:3.16} and the regularity $\overline{X}(t) \in L_{\mathcal{F}_t}^2(\Omega,H)$, we deduce that\vspace{-1mm}
\begin{eqnarray*}
&&\Big|\mathbb{E}\big[V(\tau,\overline{X}(t))\big|\cF_t\big](\wt{\omega})-\mathbb{E}\big[V(\tau,\overline{X}(t,\wt{\omega}))\big|\cF_t\big](\wt{\omega})\Big|\\
&\le &\cC \mathbb{E}\big[\big(|\overline{X}(t)|_H+|\overline{X}(t,\wt{\omega})|_H\big)|\overline{X}(t)-\overline{X}(t,\wt{\omega})|_H\big|\cF_t\big](\wt{\omega})=0
\end{eqnarray*}
for almost every $\wt{\omega}\in \Omega$, which implies that\vspace{-1mm}
\begin{equation}\label{eq:732}
\mathbb{E}[V(\tau,\overline{X}(t)|\cF_t](\wt{\omega})=\mathbb{E}[V(\tau,\overline{X}(t,\wt{\omega}))|\cF_t](\wt{\omega}),\qquad\mathbb{P}\mbox{-a.s.}\vspace{-1mm}
\end{equation}

We select a measurable set $\Omega_0 \subset \Omega$ with full measure ($\mathbb{P}(\Omega_0)=1$) such that for every $\omega_0 \in \Omega_0$, the following properties hold:\vspace{-1mm}
\begin{equation}
\begin{cases}\ds
V(t,\omega_0,\overline{X}(t,\omega_0))=\mathbb{E}\Big(\int_t^T f(r,\overline{X}(r),\bar{u}(r))dr+h(\overline{X}(T))\Big| \cF_t\Big)(\omega_0),\nonumber\\
\textup{\eqref{12.13-eq6.25}, \eqref{12.13-eq6.29},\eqref{12.13-eq6.31} and \eqref{eq:732} are satisfied for any rational}\  \tau>t,\nonumber\\
\sup \limits_{s\leq r\leq T}|p(r,\omega_0)|<+\infty,\\
P(t,\omega_0)\in \cL(H),\ P(\cdot,\omega_0)\xi\in L^2(r,T;H), \ \forall \xi \in L_{\mathcal{F}_r}^2(\Omega;H), \ \forall r\in [0,T].
\end{cases}\vspace{-1mm}
\end{equation}
For a fixed $\omega_0 \in \Omega_0$, we define the conditional expectation $\mathbb{E}^t_{\omega_0}[\cdot] := \mathbb{E}(\cdot|\mathcal{F}_t)(\omega_0)$. Then for any rational $\tau > t$, we derive the following key estimate:\vspace{-1mm} 
\begin{eqnarray}
& & \mathbb{E}_{\omega_0}^tV(\tau,\overline{X}(t))-V(t,\omega_0,\overline{X}(t,\omega_0))\nonumber\\
& \leq\3n & \mathbb{E}^t_{\omega_0}\Big(-\int_t^{\tau}\bar{f}(r)dr+\int_{\tau}^T\big[f(r,X_{\tau}(r),\bar{u}(r))-\bar{f}(r)\big]dr+h(X_{\tau}(T))-h(\overline{X}(T))\Big)\nonumber\\
& =\3n & \mathbb{E}^t_{\omega_0}\Big(-\int_t^{\tau}\bar{f}(r)dr+\int_{\tau}^T \langle \bar{f}_x(r),\xi_{\tau}(r)\rangle_H dr+\langle h_x(\overline{X}(T)),\xi_{\tau}(T)\rangle_H  \\
& & \qq+\frac{1}{2}\int_{\tau}^T \langle \bar{f}_{xx}(r)\xi_{\tau}(r),\xi_{\tau}(r)\rangle_H dr+\frac{1}{2}\langle h_{xx}(\overline{X}(T))\xi_{\tau}(T),\xi_{\tau}(T)\rangle_H \Big)+o(|\tau-t|)\nonumber.
\end{eqnarray}

Similar to inequality \eqref{eq712}, and by \eqref{eq:732}, the following inequality holds:\vspace{-1mm}
\begin{eqnarray}\label{12.13-eq86}
& & \mathbb{E}_{\omega_0}^tV(\tau,\overline{X}(t,\omega_0))-V(t,\omega_0,\overline{X}(t,\omega_0))\nonumber\\
& \leq\3n   & -\mathbb{E}^t_{\omega_0}\int_t^{\tau}\bar{f}(r)dr-\mathbb{E}^t_{\omega_0}\(\langle p(\tau),\xi_{\tau}(\tau)\rangle_H - \frac{1}{2}\langle P(\tau)\xi_{\tau}(\tau),\xi_{\tau}(\tau)\rangle_H\)\nonumber\\
& &     -\mathbb{E}^t_{\omega_0} \int_{\tau}^T \big(\langle p(r),\tilde\epsilon_{\tau,a}(r)\rangle_H + \langle q(r), \tilde\epsilon_{\tau,b}(r)\rangle_{\mathcal{L}_2^0} \big)dr  -\frac{1}{2}\mathbb{E}^t_{\omega_0} \int_{\tau}^T \langle P(r)\epsilon_{\tau,a}(r),\xi_{\tau}(r)\rangle_H dr  \nonumber\\
& & -\mathbb{E}^t_{\omega_0} \int_{\tau}^T \big(\langle P(r)\xi_{\tau}(r),\epsilon_{\tau,a}(r)\rangle_H + \langle P(r)\bar{b}_{x}(r)\xi_{\tau}(r),\epsilon_{\tau,b}(r)\rangle_{\mathcal{L}_2^0}\big)dr\nonumber\\
& & -\mathbb{E}^t_{\omega_0} \int_{\tau}^T \langle P(r)\epsilon_{\tau,b}(r),\bar{b}_{x}(r)\xi_{\tau}(r)+\epsilon_{\tau,b}(r)\rangle_{\mathcal{L}_2^0}dr  \nonumber \\
& & -\mathbb{E}^t_{\omega_0} \int_{\tau}^T \big(\langle \epsilon_{\tau,b}(r),\hat{Q}^{(\tau)}(r)\rangle_{\mathcal{L}_2^0}+ \langle Q^{(\tau)}(r), \epsilon_{\tau,b}(r)\rangle_{\mathcal{L}_2^0}\big)dr + \ o(|\tau-t|)\nonumber\\
& =\3n   & -\mathbb{E}^t_{\omega_0}\int_t^{\tau}\bar{f}(r)dr-\mathbb{E}^t_{\omega_0}\( \langle p(\tau),\xi_{\tau}(\tau)\rangle_H - \frac{1}{2}\langle P(\tau)\xi_{\tau}(\tau),\xi_{\tau}(\tau)\rangle_H\) +o(|\tau-t|).
\end{eqnarray}

We now analyze the right-hand side of \eqref{12.13-eq86} term by term. First, for any $\phi,\phi'\in L_{\mathbb{F}}^2(0,T;H)$ and $\psi\in L_{\mathbb{F}}^2(0,T;\mathcal{L}_2^0)$, we have \vspace{-1mm}
\begin{eqnarray}\label{12.13-eq87}
\mathbb{E}^t_{\omega_0}\Big\langle \int_t^{\tau} \phi(r)dr,\int_t^{\tau} \phi'(r)dr\Big\rangle_H\3n &\leq\3n &  \Big(\mathbb{E}^t_{\omega_0}\Big|\int_t^{\tau} \phi(r)dr\Big|_H^2\Big)^{\frac{1}{2}} \Big(\mathbb{E}^t_{\omega_0}\Big| \int_t^{\tau} \phi'(r)dr\Big|_H^2\Big)^{\frac{1}{2}}\nonumber\\
& =\3n &(\tau-t)\Big(\int_t^{\tau}\mathbb{E}^t_{\omega_0}|\phi(r)|_H^2dr\int_t^{\tau}\mathbb{E}^t_{\omega_0}|\phi'(r)|_H^2dr\Big)^{\frac{1}{2}}\\
& =\3n & o(|\tau-t|), \hspace{1cm}\textup{as}\ \tau\downarrow t,\ \ \forall \ t\in [0,T),\ \ \mathbb{P}\mbox{-a.s}.\nonumber
\end{eqnarray}
Moreover, by the Lebesgue differentiation theorem and continuity of the filtration $t\mapsto \mathcal{F}_t$:\vspace{-1mm}
\begin{eqnarray}\label{12.13-eq88}
\mathbb{E}^t_{\omega_0}\Big\langle\! \int_t^{\tau}\! \phi(r)dr,\int_t^{\tau}\! \psi(r)dW(r)\Big\rangle_H \3n&\leq\3n&  \Big(\mathbb{E}^t_{\omega_0}\Big|\int_t^{\tau} \phi(r)dr\Big|_H^2\Big)^{\frac{1}{2}} \Big(\mathbb{E}^t_{\omega_0}\Big|\int_t^{\tau} \psi(r)dW(r)\Big|_{\mathcal{L}_2^0}^2\Big)^{\frac{1}{2}}\nonumber\\
& =\3n &(\tau-t)^{\frac{1}{2}}\Big(\!\int_t^{\tau}\!\mathbb{E}^t_{\omega_0}|\phi(r)|_H^2dr\!\int_t^{\tau}\!\mathbb{E}^t_{\omega_0}|\psi(r)|_{\mathcal{L}_2^0}^2dr\Big)^{\frac{1}{2}}\\
& =\3n & o(|\tau-t|), \hspace{1cm}\textup{as}\ \tau\downarrow t,\ \ a.e.\  t\in [0,T),\ \ \mathbb{P}\mbox{-a.s}.\nonumber
\end{eqnarray}
Combining \eqref{12.13-eq87} and \eqref{12.13-eq88}, we analyze the adjoint term: \vspace{-1mm}
\begin{eqnarray}\label{12.13-eq88-1}
& &\3n\3n \3n\3n\3n\3n   \mathbb{E}^t_{\omega_0} \langle p(\tau),\xi_{\tau}(\tau) \rangle_H =\mathbb{E}^t_{\omega_0}\big(\langle p(t),\xi_{\tau}(\tau)\rangle_H+\langle p(\tau)-p(t),\xi_{\tau}(\tau)\rangle_H\big)\nonumber\\
&  &\3n\3n \3n\3n\3n\3n   =  \mathbb{E}^t_{\omega_0}\Big\{\Big\langle p(t), -[S(\tau-t)-I]\overline{X}(t)-\int_t^{\tau} S(\tau-r)\bar{a}(r)dr-\int_t^{\tau} S(\tau-r)\bar{b}(r)dW(r)\Big\rangle_H\nonumber\\
& & +\Big\langle [S(\tau-t)-I]S(T-\tau)h_{x}(\overline{X}(T))-\int_t^{\tau}S(r-t)\big[\bar{a}_x(r)^*p(r)+\bar{b}_x(r)^*q(r)-\bar{f}_x(r)\big]dr\nonumber\\
& &   -\int_{\tau}^T[S(\tau-t)-I]S(r-\tau)\big[\bar{a}_x(r)^*p(r) +\bar{b}_x(r)^*q(r) -\bar{f}_x(r)\big]dr\\
& &  
+ \int_t^{\tau}S(r-t)q(r)dW(r)-\int_{\tau}^T[S(\tau-t)-I]S(r-\tau)q(r)dW(r),\nonumber\\
& &   -[S(\tau-t)-I]\overline{X}(t)-\int_t^{\tau} S(\tau-r)\bar{a}(r)dr
-\int_t^{\tau} S(\tau-r)\bar{b}(r)dW(r)\Big\rangle_H\Big\}\nonumber\\
&  &\3n\3n \3n\3n \3n\3n  =\mathbb{E}^t_{\omega_0}\[-\Big\langle A^*p(t),(\tau-t) \overline{X}(t)- \Big\langle p(t),\int_t^{\tau} S(\tau-r)\bar{a}(r)dr\Big\rangle_H \nonumber\\
&  & -\int_t^{\tau} \lan S(r-t)q(r), S(\tau-r)\bar{b}(r)\ran_{\cL_2^0}dr\] +o(|\tau-t|).\nonumber 
\end{eqnarray}
Finally, using the definition of $\xi_{\tau}(\tau)$, we estimate the second-order term:\vspace{-1mm}
\begin{eqnarray}\label{12.13-eq90}
& & \mathbb{E}^t_{\omega_0}\langle P(\tau)\xi_{\tau}(\tau),\xi_{\tau}(\tau)\rangle_H\nonumber\\
& =\3n &  \mathbb{E}^t_{\omega_0}\Big\langle P(\tau)\Big\{[S(\tau-r)-I]\overline{X}(t)-\int_t^{\tau}S(\tau-r)\bar{a}(r)dr-\int_t^{\tau}S(\tau-r)\bar{b}(r)dW(r)\Big\},\nonumber\\
& & \q\; [S(\tau-r)-I]\overline{X}(t)-\int_t^{\tau}S(\tau-r)\bar{a}(r)dr-\int_t^{\tau}S(\tau-r)\bar{b}(r)dW(r)\Big\rangle_H\nonumber\\
& =\3n & \mathbb{E}^t_{\omega_0}\int_t^{\tau} \langle P(\tau)\bar{b}(r),\bar{b}(r)\rangle_{\mathcal{L}_2^0} dr+o(|\tau-t|)\\
& =\3n &\mathbb{E}^t_{\omega_0}\int_t^{\tau} \langle P(t)\bar{b}(t),\bar{b}(t)\rangle_{\mathcal{L}_2^0} dr + \mathbb{E}^t_{\omega_0}\int_t^{\tau}  \langle P(\tau)\bar{b}(r),\bar{b}(r)-\bar{b}(t)\rangle_{\mathcal{L}_2^0} dr\nonumber\\
& & +\mathbb{E}^t_{\omega_0}\int_t^{\tau} \big\langle P(\tau)\big(\bar{b}(r)-\bar{b}(t)\big),\bar{b}(t)\big\rangle_{\mathcal{L}_2^0} dr+  \mathbb{E}^t_{\omega_0}\int_t^{\tau} \big\langle \big(P(\tau)-P(t)\big)\bar{b}(t),\bar{b}(t)\big\rangle_{\mathcal{L}_2^0} dr+o(|\tau-t|).\nonumber
\end{eqnarray}

We now analyze the asymptotic behavior of terms in \eqref{12.13-eq90} as $\tau \downarrow t$. 

First, since $P(\cdot)\bar{b}(r) \in D_{\mathbb{F}}([r,T]; L^{4/3}(\Omega;\mathcal{L}_2^0))$, we have \vspace{-1mm}
\begin{eqnarray}\label{12.13-eq91}
& & \mathbb{E}^t_{\omega_0}\int_t^{\tau}  \langle P(\tau)\bar{b}(r),\bar{b}(r)-\bar{b}(t)\rangle_{\mathcal{L}_2^0} dr\nonumber\\
& \leq\3n & |P(\cdot)\bar{b}(\cdot)|_{D_{\mathbb{F}}([t,T];L^{4/3}(\Omega;\mathcal{L}_2^0))}\int_t^{\tau}\big(\mathbb{E}^t_{\omega_0}|\bar{b}(r)-\bar{b}(t)|_{\mathcal{L}_2^0}^4\big)^{\frac{1}{4}}dr\\
& =\3n & o(|\tau-t|), \hspace{1cm}  \textup{as}\ \tau\downarrow t,\ \ a.e.\  t\in [0,T).\nonumber
\end{eqnarray}
Similarly, we obtain that\vspace{-4mm}
\begin{eqnarray}\label{12.13-eq92}
& & \mathbb{E}^t_{\omega_0}\int_t^{\tau} \big\langle P(\tau)\big(\bar{b}(r)-\bar{b}(t)\big),\bar{b}(t)\big\rangle_{\mathcal{L}_2^0} dr\nonumber\\
& \leq\3n & \int_t^\tau \big[\mathbb{E}^t_{\omega_0}\big|P(\tau)\big(\bar{b}(r)-\bar{b}(t)\big)\big|_{\mathcal{L}_2^0}^{\frac{4}{3}}\big]^{\frac{3}{4}}dr|\bar{b}(t)|_{L^4_{\mathcal{F}_t}(\Omega;\mathcal{L}_2^0)}\\
& =\3n & o(|\tau-t|), \hspace{1cm}  \textup{as}\ \tau\downarrow t,\ \ a.e.\  t\in [0,T).\nonumber
\end{eqnarray}
By the definition of $\mathcal{P}[0,T]$, for any $\xi \in L_{\mathcal{F}_t}^4(\Omega;H)$, the mapping $P(\cdot)\xi$ is right continuous in $L_{\mathcal{F}_t}^4(\Omega;H)$. This implies that\vspace{-4mm}
\begin{eqnarray}\label{12.13-eq93}
& & \mathbb{E}^t_{\omega_0}\int_t^{\tau} \big\langle \big(P(\tau)-P(t)\big)\bar{b}(t),\bar{b}(t)\big\rangle_{\mathcal{L}_2^0} dr\nonumber\\
& \leq\3n & (\tau-t)\big|(P(\tau)-P(t)\big)\bar{b}(t)\big|_{L^{4/3}_{\mathcal{F}_t}(\Omega;\mathcal{L}_2^0)}\big|\bar{b}(t)|_{L^4_{\mathcal{F}_t}(\Omega;\mathcal{L}_2^0)}\nonumber\\
& =\3n & o(|\tau-t|), \q\textup{as}\ \tau\downarrow t, \nonumber
\end{eqnarray}
Combining estimates  \eqref{12.13-eq90}--\eqref{12.13-eq93} yields\vspace{-1mm}
\begin{eqnarray}\label{12.13-eq93-1}
&&\mathbb{E}^t_{\omega_0}\langle P(\tau)\xi_{\tau}(\tau),\xi_{\tau}(\tau)\rangle_H 
\nonumber \\
& =\3n & \mathbb{E}^t_{\omega_0}\int_t^{\tau} \langle P(t)\bar{b}(t),\bar{b}(t)\rangle_{\mathcal{L}_2^0} dr +o(|\tau-t|)\\
& = \3n& (\tau\!-t)\langle P(t)\bar{b}(t),\bar{b}(t)\rangle_{\mathcal{L}_2^0} +o(|\tau-t|),  \q\textup{as}\ \tau\downarrow t,\ \ a.e.\  t\in [0,T).\nonumber
\end{eqnarray}
From \eqref{12.13-eq86}, \eqref{12.13-eq88-1} and \eqref{12.13-eq93-1}, we conclude that for any rational $\tau > t$ and $\omega_0\in \Omega_0$:
\begin{eqnarray*}
& &\3n\3n\3n\3n\3n\3n \mathbb{E}_{\omega_0}^t V(\tau,\overline{X}(t,\omega_0))-V(t,\omega_0,\overline{X}(t,\omega_0))\nonumber\\
&&\3n\3n\3n\3n\3n\3n \leq \mathbb{E}^t_{\omega_0}\Big(\big\langle p(t),\big[S(\tau-t)-I\big]\overline{X}(t)\big\rangle_H+\Big\langle p(t),\int_t^{\tau}S(\tau-r)\bar{a}(r)dr\Big\rangle_H\nonumber\\
& & +\int_t^{\tau}\! \lan S(r\!-\!t)q(r), S(\tau\!-\!r)\bar{b}(r)\ran_{\cL_2^0}dr-\frac{1}{2}(\tau\!-t)\langle P(t)\bar{b}(t),\bar{b}(t)\rangle_{\mathcal{L}_2^0} -\int_t^{\tau}\!\bar{f}(r)dr\Big) +o(|\tau\!-t|)\nonumber\\
&&\3n\3n\3n\3n\3n\3n \leq (\tau-t)\Big\{\langle\!\langle A\overline{X}(t,\omega_0),p(t,\omega_0)\rangle\!\rangle-\mathbb{H}(t,\overline{X}(t,\omega_0),\overline{u}(t,\omega_0),-p(t,\omega_0),-q(t,\omega_0),P(t,\omega_0))\Big\}+o(|\tau-t|),
\end{eqnarray*}
which completes the proof of the theorem.
\end{proof}
%


\section{Illustrative examples }\label{sec-exam}

In this section, we present two illustrative examples which fulfill
the assumptions in Theorems \ref{thm-sdy1}, \ref{th5.1}   and/or \ref{th6.1}.

Consider a complete filtered probability space $(\widehat\Omega, \widehat{\mathcal{F}}, \{\widehat{\mathcal{F}}_t\}_{t\in [0,T]}, \widehat{\mathbb{P}})$, on which a standard one dimensional Brownian motion
$\widehat W(\cdot)$ is defined and $\widehat{\mathbf{F}}\deq \{\widehat{\cF}_t\}_{t\in [0,T]}$ is
the natural filtration generated by $\widehat W(\cdot)$.

Let $G\subset\dbR^n$ be a bounded domain with the smooth boundary
$\pa G$.  Let $H=L^2(G)$ and $U$  be a bounded closed subset of
$L^2(G)$. Consider the following stochastic parabolic equation:
\begin{equation}\label{system3}
\begin{cases}
\ds   dy =\big(\Delta y + \hat a(t,y,u)\big) dt+\hat b(t,y,u) d\widehat W(t) &\textup{in }  (0,T]\times G,\\
\ns\ds   y =0 &\textup{on }   (0,T]\times \pa G,\\
\ns\ds   y(0)=y_0 &\textup{in }  G,
\end{cases}
\end{equation}
where $y_0 \in L^2(G)$, $u(\cdot)\in \cU[0,T]$, and $\hat a$ and
$\hat b$  satisfy the following condition:

\ss

\no{\bf (B1)} {\it For $\f=\hat a,\hat b$, suppose that
$\f(\cd,\cd,\cd,\cd):[0,T]\times\widehat\Om\times \dbR\times \dbR\to
\dbR$ satisfies : i) For any $(r,u)\in
\dbR\times \dbR$, the function
$\f(\cd,\cd,r,u):[0,T] \to \dbR$ is $\widehat{\mathbf{F}}$-adapted; ii) For any $r\in
\dbR$ and a.e. $\hat\om\in\widehat\Om$, the function $\f(\cdot,\hat\om,r,\cd):[0,T]\times\dbR\to \dbR$
is continuous; and iii) For all
$(t,r_1,r_2,u_1,u_2)\in [0,T]\times \dbR\times
\dbR\times \dbR$,\vspace{-1mm}
\begin{equation}\label{ab0}
\left\{
\begin{array}{ll}\ds
	|\f(t,r_1,u_1) - \f(t,r_2,u_2)|   \leq
	\cC (|r_1-r_2| + |u_1-u_2|), \q \wt{\dbP}\mbox{-a.s.},\\
	\ns\ds |\f(t,0,u)| \leq \cC(1+|u|), \q \wt{\dbP}\mbox{-a.s.};
\end{array}
\right.
\end{equation}
iv)  For all
$(t,u)\in [0,T]\times  \dbR$ and a.e. $\hat\om\in\widehat\Om$,
$\f(t,\hat\om,\cd,u)$ is $C^2$, and for any $(r,u)\in
\dbR\times \dbR$ and a.e. $(t,\hat\om)\in [0,T]\times\widehat\Om$,\vspace{-1mm}
\begin{equation*}\label{ab1}
|\f_r(t,\hat\om,r,u)| + |\f_{rr}(t,\hat\om,r,u)|
\leq \cC.
\end{equation*}}

Consider the following
cost functional:\vspace{-1mm}
\begin{equation}\label{cost3}
\mathcal{J}(y_0;u(\cdot))= \mathbb{E}\Big(\int_0^T\int_G \hat
f(t,y(t),u(t))dxdt+\int_G \hat h(y(T))dx \Big),
\end{equation}
where $\hat f$ and $\hat h$ satisfy the following condition:

\ss

\no{\bf (B2)} {\it For all $(r,u)\in\dbR\times\dbR$ and a.e. $\hat\om\in\widehat\Om$, $\hat f(\cdot,\hat\om,r,u)$ is continuous;  for all $(t,u)\in [0,T]\times\dbR$ and a.e. $\hat\om \in\widehat\Om$,  $\hat
f(t,\hat\om,\cd,u)$ and $\hat h(\hat\om,\cd)$ are $C^2$, such that $\hat
f_r(t,\hat\om, r,\cd)$ and $\hat f_{rr}(t,\hat\om,r,\cd)$ are continuous, and for
any $(r,u)\in \dbR\times \dbR$ and a.e. $(t,\hat\om)\in [0,T]\times\widehat\Om$,
\begin{equation*}\label{ab1-1}
|\hat f_r(t,\hat\om,r,u)| +|\hat h_r(\hat\om,r) |+|\hat f_{rr}(t,\hat\om,r,u)|
+|\hat h_{rr}(\hat\om,r)| \leq \cC.
\end{equation*}
}

Under {\bf (B1)} and {\bf (B2)}, it is easy to see that ({\bf
S1})--({\bf S3}) hold. Then we know that all assumptions in Theorem
\ref{th5.1} are fulfilled.  By the regularity of backward stochastic
parabolic equations (e.g.,\cite{Du2012}), we know that $A^*p(t)\in
L^2(G)$ for a.e. $(t,\hat\om)\in (0,T)\times \widehat\Om$. Hence,  all
assumptions in Theorems \ref{thm-sdy1} and \ref{th6.1} are fulfilled.

\ss

Next, let $H=H_0^1(G)\times L^2(G)$ and $U$  be a bounded closed
subset of $L^2(G)$. Consider the following stochastic hyperbolic
equation:
\begin{equation}\label{system4}
\begin{cases}
\ds   dy_t =\big(\Delta y + \hat a(t,y,u)\big) dt+\hat b(t,y,u) dW(t) &\textup{in }  (0,T]\times G,\\
\ns\ds   y =0 &\textup{on }   (0,T]\times \pa G,\\
\ns\ds   y(0)=y_0,\; y_t(0)=y_1 &\textup{in }  G.
\end{cases}
\end{equation}
where $ (y_0,y_1)\in H_0^1(G)\times L^2(G)$, $u(\cdot)\in \cU[0,T]$, and $\hat a$ and $\hat b$ satisfy {\bf (B1)}.

Consider the following cost functional: 
\begin{equation}\label{cost4}
\mathcal{J}(y_0,y_1;u(\cdot))= \mathbb{E}\Big(\int_0^T\int_G
\hat f(t,y(t),u(t))dxdt+\int_G \hat h(y(T))dx \Big),
\end{equation}
where $\hat f$ and $\hat h$ satisfy  {\bf (B2)}. Under {\bf (B1)} and {\bf (B2)}, it is easy to see that ({\bf S1})--({\bf S3}) hold. Then we know that all assumptions in Theorems \ref{thm-sdy1} and \ref{th5.1} are fulfilled.


\section{Appendix}

In this section, we derive an It\^o-Kunita formula for 
$H$-valued It\^o processes, extending the classical It\^o-Kunita formula for  
$\dbR^n$-valued It\^o processes. In addition to proving Proposition \ref{prop:b1}, we highlight the intrinsic theoretical significance of this result.

As preparation, we first give the following proposition similar to \cite[Proposition 1.132]{Fabbri2017} under the assumption that the generator $A$ generates a generalized contraction semigroup (introduced in Section 1).

\begin{proposition}\label{prop51}
Let $p\ge 2$ and $n\in \mathbb{N}$, let $\wt{\xi} \in L_{\mathcal{F}_0}^{p}(\Omega;H)$, $\wt{a} \in L_{\mathbb{F}}^{p}(\Omega;L^{\infty}(0,T;H))$ and $\wt{b} \in L_{\mathbb{F}}^p(\Omega; L^{\infty}(0,T; \mathcal{L}_2^0))$. Let $\wt{X}(\cdot)$ be the mild solution of\vspace{-1mm}
\begin{equation}\label{eeq1}
\begin{cases}
d\wt{X}(s) = (A\wt{X}(s) + \wt{a}(s))\,ds + \wt{b}(s)dW(s),\qq s\in (0,T], \\ 
\ns\ds \wt{X}(0) = \xi,
\end{cases}\vspace{-1mm}
\end{equation}
and $\wt{X}^{n}(\cdot)$ be the solution of\vspace{-1mm}
\begin{equation}\label{eeq2}
\begin{cases}\ds
d\wt{X}^{n}(s) = (A_{n}\wt{X}^{n}(s) + \wt{a}(s))\,ds + \wt{b}(s)dW(s),\qq s\in (0,T], \\ 
\ns\ds\wt{X}^{n}(0) = \xi,
\end{cases} \vspace{-1mm}
\end{equation}
where $A_{n} \deq nA(nI-A)^{-1}$ is the Yosida approximation of $A$. Then $X_n\rightarrow X$ in $L_{\mathbb{F}}^p(\Omega;C([0,T];H))$.
\end{proposition}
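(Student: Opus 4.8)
The plan is to work with the mild–solution representations of $\wt X$ and $\wt X^{n}$ and to show that the three resulting difference terms — coming from the initial datum, the drift, and the diffusion — each tend to $0$ in $L^{p}_{\mathbb F}(\Omega;C([0,T];H))$. First I would record the standard facts about the Yosida scheme: writing $J_{n}\deq n(nI-A)^{-1}$, one has $A_{n}=AJ_{n}=J_{n}A$ on $D(A)$, $J_{n}\to I$ strongly with $\sup_{n\ge n_{0}}|J_{n}|_{\cL(H)}<\infty$ (this is where the \emph{generalized} contraction hypothesis enters, through $|(\lambda I-A)^{-1}|_{\cL(H)}\le(\lambda-c)^{-1}$ for $\lambda>c$), and the semigroups $S_{n}(\cd)$ generated by $A_{n}$ satisfy $\sup_{n\ge n_{0}}\sup_{t\in[0,T]}|S_{n}(t)|_{\cL(H)}\le M$ together with $S_{n}(t)x\to S(t)x$ as $n\to\infty$, uniformly for $t\in[0,T]$ and for $x$ in compact subsets of $H$. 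Then, from
$$\wt X(t)-\wt X^{n}(t)=(S(t)-S_{n}(t))\xi+\int_{0}^{t}(S(t-r)-S_{n}(t-r))\wt a(r)\,dr+\int_{0}^{t}(S(t-r)-S_{n}(t-r))\wt b(r)\,dW(r),$$
the first two terms are dispatched by dominated convergence: pathwise, the pointwise convergence of $S_{n}$, the uniform bound $|S(t)-S_{n}(t)|_{\cL(H)}\le M+e^{cT}$, and density of continuous functions in $L^{1}(0,T;H)$ give that the supremum over $t\in[0,T]$ of the $H$-norm of each term tends to $0$ $\dbP$-a.s., while these quantities are dominated by $\cC(|\xi|_{H}+|\wt a|_{L^{\infty}(0,T;H)})\in L^{p}(\Omega)$.

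The crux is the diffusion term $I_{n}(t)\deq\int_{0}^{t}(S(t-r)-S_{n}(t-r))\wt b(r)\,dW(r)$, and the difficulty is that $S-S_{n}$ is not a semigroup, so no maximal inequality for stochastic convolutions applies to it directly. I would deal with this in two stages. \emph{Stage 1: reduction to smooth coefficients.} Put $\wt b^{m}\deq J_{m}\wt b$; then $\wt b^{m}\to\wt b$ in $L^{p}_{\mathbb F}(\Omega;L^{2}(0,T;\cL_{2}^{0}))$, each $\wt b^{m}$ takes values in $D(A)$, and $A\wt b^{m}=A_{m}\wt b\in L^{p}_{\mathbb F}(\Omega;L^{\infty}(0,T;\cL_{2}^{0}))$. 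By the Burkholder–Davis–Gundy type maximal inequality for stochastic convolutions \cite[Theorem 3.18]{Lu2021}, whose constant depends on the semigroup only through $\sup_{t\in[0,T]}|S_{n}(t)|_{\cL(H)}$ and is therefore uniform in $n$, one gets
$$\mE\sup_{t\in[0,T]}\Big|\int_{0}^{t}(S(t-r)-S_{n}(t-r))(\wt b-\wt b^{m})(r)\,dW(r)\Big|_{H}^{p}\le\cC\,\mE\Big(\int_{0}^{T}|(\wt b-\wt b^{m})(r)|_{\cL_{2}^{0}}^{2}\,dr\Big)^{p/2}\xrightarrow[m\to\infty]{}0,$$
uniformly in $n$. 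Hence it suffices to prove $\mE\sup_{t}|\int_{0}^{t}(S(t-r)-S_{n}(t-r))\wt b^{m}(r)\,dW(r)|_{H}^{p}\to0$ for each fixed $m$.

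\emph{Stage 2: smooth coefficients.} For regular $\wt b^{m}$ the stochastic convolutions $Y^{m}(t)\deq\int_{0}^{t}S(t-r)\wt b^{m}(r)\,dW(r)$ and $Y^{m}_{n}(t)\deq\int_{0}^{t}S_{n}(t-r)\wt b^{m}(r)\,dW(r)$ are the \emph{strong} solutions of $dY^{m}=AY^{m}\,dt+\wt b^{m}\,dW$ and $dY^{m}_{n}=A_{n}Y^{m}_{n}\,dt+\wt b^{m}\,dW$ (the former by the regularity of $\wt b^{m}$, the latter because $A_{n}$ is bounded). Subtracting, the martingale parts cancel exactly, so $d(Y^{m}-Y^{m}_{n})=\big[A_{n}(Y^{m}-Y^{m}_{n})+(A-A_{n})Y^{m}\big]dt$ with zero initial value, and the variation-of-constants formula for the bounded generator $A_{n}$ gives $Y^{m}(t)-Y^{m}_{n}(t)=\int_{0}^{t}S_{n}(t-r)(A-A_{n})Y^{m}(r)\,dr$. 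Since $(A-A_{n})Y^{m}(r)=(I-J_{n})AY^{m}(r)$ with $AY^{m}=\int_{0}^{\cd}S(\cd-r)A\wt b^{m}(r)\,dW(r)\in L^{p}_{\mathbb F}(\Omega;C([0,T];H))$, and $|I-J_{n}|_{\cL(H)}$ is bounded with $(I-J_{n})y\to0$ for every $y$, dominated convergence yields $\mE\int_{0}^{T}|(A-A_{n})Y^{m}(r)|_{H}^{p}\,dr\to0$, whence $\mE\sup_{t}|Y^{m}(t)-Y^{m}_{n}(t)|_{H}^{p}\le M^{p}T^{p-1}\mE\int_{0}^{T}|(A-A_{n})Y^{m}(r)|_{H}^{p}\,dr\to0$. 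Combining Stages 1 and 2 shows the diffusion term vanishes, and summing the three estimates gives $\wt X^{n}\to\wt X$ in $L^{p}_{\mathbb F}(\Omega;C([0,T];H))$. The main obstacle, as indicated, is precisely the diffusion term: forcing the maximal inequality to do its work even though $S-S_{n}$ is not a semigroup, which is what necessitates the density reduction to regular $\wt b$ and the reformulation of the convolution difference as a noise-free linear equation; everything else is routine dominated-convergence bookkeeping.
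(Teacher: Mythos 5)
Your proposal is correct, and its skeleton coincides with the paper's: both split $\wt X-\wt X^{n}$ into the three mild-solution differences $I_{1}^{n},I_{2}^{n},I_{3}^{n}$ coming from the initial datum, the drift, and the stochastic convolution. The difference is in how much is actually proved. The paper disposes of the stochastic convolution term by citing \cite[Proposition 1.112]{Fabbri2017} and of the other two by saying the argument parallels \cite[Proposition 1.132]{Fabbri2017}; you instead supply a complete, self-contained argument. Your treatment of the deterministic terms (pointwise convergence $S_{n}(t)x\to S(t)x$ uniform in $t$, uniform operator bounds, dominated convergence in $r$ and then in $\omega$) is routine and correct. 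The genuinely nontrivial contribution is your two-stage handling of $I_{3}^{n}$: the reduction to $\wt b^{m}=J_{m}\wt b$ via the maximal inequality of \cite[Theorem 3.18]{Lu2021} with constants uniform in $n$ (legitimate, since $|S_{n}(t)|_{\cL(H)}\le e^{c_{n}t}$ with $c_{n}\to c$), followed by the observation that for $D(A)$-valued coefficients both convolutions are strong solutions, so their difference solves the noise-free pathwise linear equation $dD=\big[A_{n}D+(I-J_{n})AY^{m}\big]dt$ and hence $D(t)=\int_{0}^{t}S_{n}(t-r)(I-J_{n})AY^{m}(r)\,dr$, which vanishes by dominated convergence. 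This cleanly circumvents the fact that $S-S_{n}$ is not a semigroup and is essentially the argument hiding behind the reference the paper cites; your version buys self-containedness at the cost of length, while the paper's buys brevity at the cost of sending the reader to \cite{Fabbri2017}. One cosmetic remark: the density of continuous functions in $L^{1}(0,T;H)$ that you invoke for the drift term is not actually needed—direct dominated convergence in $r$ already suffices, exactly as you use it.
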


\begin{proof}
By assumptions on $\wt{\xi}$, $\wt{a}$ and $\wt{b}$, the mild solution of $X$ is well defined and satisfies\vspace{-1mm}
$$X(t) = S(t)\xi + \int_{0}^{t} S(t-s)\wt{a}(s)ds + \int_{0}^{t} S(t-s)\wt{b}(s)dW(s), \qquad t \in [0, T].\vspace{-1mm}$$
The same is true for the mild solution of \eqref{eeq2} (which is also a strong solution). Fix $t \in [0, T]$,\vspace{-1mm}
$$
\begin{aligned}
X^n(t) - X(t) &= \big[S_n(t) - S(t)\big] \wt{\xi} + \int_0^t \big[S_n(t-s) - S(t-s)]\wt{a}(s) \, ds \\
&\quad + \int_0^t \big[S_n(t-s) - S(t-s)\big] \wt{b}(s)dW(s) \\
&=: I_1^n(t) + I_2^n(t) + I_3^n(t).
\end{aligned}\vspace{-1mm}
$$

It suffices to prove $\lim\limits_{n \to \infty} \mathbb{E}\big[ \sup_{s \in [0,T]} \left| I_{i}^{n}(s) \right|_H^{p} \big] = 0$ for $i = 1, 2, 3$.

Since $A$ generates a generalized contractive $C_0$-semigroup, it follows from \cite[Proposition 1.112]{Fabbri2017} that\vspace{-1mm}
$$
\lim_{n \to \infty} \mathbb{E}\Big[ \sup_{s \in [0,T]} \left| I_{3}^{n}(s) \right|_H^{p} \Big] = 0.\vspace{-1mm}
$$
Meanwhile, for $i = 1, 2$, the result follows similarly, as the proof parallels that of \cite[Proposition 1.132]{Fabbri2017}, thus completing the proof. 
\end{proof}

Next, we fix the regularity of $\wt{a},\wt{b}$, let $\widetilde{a} \in L_{\mathbb{F}}^2(\Omega; L^{\infty}(0,T; H))$ and $\widetilde{b} \in L_{\mathbb{F}}^4(\Omega; L^{\infty}(0,T; \mathcal{L}_2^0))$. Denote by $\wt X$ the mild solution of the following SEE:\vspace{-1mm}
\begin{equation}\label{8.15-eq1}
\begin{cases}\ds d\wt X(t) = \big( A\wt X(t) + \widetilde{a}(t) \big) dt + \widetilde{b}(t)  dW(t), \quad t \in [0,T],\\
\ns\ds \wt X(0)=X_0\in H.
\end{cases}\vspace{-1mm}
\end{equation}

Assume the stochastic field $F$  defined on $[0,T]\times\Om \times H$ satisfies the SEE\vspace{-1mm}
$$
dF(t, x) = \Gamma(t, x)  dt + \Phi(t, x)  dW(t), \quad (t,x) \in [0,T] \times H,\vspace{-1mm}
$$
where $\Gamma \in \mathbb{S}_{\mathbb{F}}^{1}([0,T] \times H)$ and $\Phi \in \mathbb{S}_{\mathbb{F}}^{1}([0,T] \times H, \widetilde{H})$ are continuously differentiable in the spatial variable. Furthermore, assume that the field $F$ itself belongs to $\mathbb{C}_{\mathbb{F}}^{0,2}([0,T] \times H)$, indicating it is twice continuously differentiable in the spatial variable and continuous in the time variable, and the adjoint composition $A^*F_x \in \mathbb{S}_{\mathbb{F}}^{0}([0,T] \times H)$ establishes an important relationship between the field's spatial derivatives and the system's generator $A$. This specification ensures the necessary regularity for applying stochastic calculus in infinite-dimensional spaces, with the evolution equation capturing both the deterministic drift through $\G$ and random fluctuations through $\Phi$ as the field evolves over time and space within the domain $[0,T] \times H$.
 
Further, we assume  that there exist a positive-valued process $L(\cd) \in L_{\mathbb{F}}^4(\Omega,L^2(0,T;\mathbb{R}))$ and $k \in \mathbb{N}$ such that for almost every $(t,\omega) \in [0,T] \times \Omega$, the following growth conditions hold:\vspace{-1mm}
$$
\begin{aligned}
&|\Gamma(t,x)| + |\Phi(t,x)|_{\widetilde{H}} + |\Gamma_x(t,x)|_H + |\Phi_x(t,x)|_{\mathcal{L}_2^0} \leq L(t)(1 + |x|_H^k), \\
&|F(t,x)| + |F_x(t,x)|_H + |A^*F_x(t,x)|_H + |F_{xx}(t,x)|_{\mathcal{L}(H)}  \leq L(t)(1 + |x|_H^k).
\end{aligned}\vspace{-1mm}
$$
\begin{lemma}\label{Ito-Kunita} Under the above assumptions, for all $t \in [0,T]$, 
\begin{eqnarray}\label{iw}
F(t, \wt X(t))\3n &=&\3nF(0, \wt X(0))+ \int_0^t\Big[ \Gamma(s, \wt X(s)) + \langle A^* F_x(s, \wt X(s)),\wt X(s)\rangle_{H}+\langle F_x(s, \wt X(s)),\wt{a}(s)\rangle_{H}\nonumber\\
& &\3n+ \frac{1}{2} \left<F_{xx}(t,\wt X(s))\wt{b}(s),\wt{b}(s)\right>_{\mathcal{L}_2^0}+ \wt{b}(s)^*\Phi_x(s, \wt X(s))\Big] ds \nonumber\\
& &\3n+\Big[\Phi(s,\wt  X(s)) + \wt{b}(s)^* F_x(s, \wt X(s))\Big]dW(s),\quad a.s.
\end{eqnarray}
\end{lemma}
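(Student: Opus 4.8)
The plan is to prove the It\^o--Kunita formula \eqref{iw} by a finite-dimensional approximation argument, transferring the classical It\^o--Kunita formula (for $\dbR^n$-valued, or more generally separable Hilbert space-valued, It\^o processes with a smooth random field) through the Yosida approximation of $A$ and a projection onto finite-dimensional subspaces. The key point is that the mild solution $\wt X$ of \eqref{8.15-eq1} is not a strong solution, so one cannot apply the classical formula directly; the term $\langle A^*F_x(s,\wt X(s)),\wt X(s)\rangle_H$ is to be read in this duality sense precisely because $\wt X(s)\notin D(A)$ in general.

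First I would introduce the Yosida approximants $A_n = nA(nI-A)^{-1}$ and let $\wt X^n$ solve \eqref{eeq2} (with $\wt a,\wt b$ as in \eqref{8.15-eq1}); by Proposition \ref{prop51}, $\wt X^n \to \wt X$ in $L^p_\dbF(\Omega;C([0,T];H))$ for $p=2$ (and, using the $L^4$ regularity of $\wt b$, we get the convergence we need for the quadratic terms). Since $A_n$ is bounded, $\wt X^n$ is a genuine (strong) It\^o process in $H$, so the classical It\^o--Kunita formula applies to $F(t,\wt X^n(t))$; this is the standard statement that for a random field $F$ with the stated semimartingale decomposition $dF(t,x)=\Gamma(t,x)dt+\Phi(t,x)dW(t)$ that is $C^1$ in the spatial variable for $\Gamma,\Phi$ and $C^2$ for $F$, one has
\begin{eqnarray*}
F(t,\wt X^n(t)) &=& F(0,\wt X^n(0)) + \int_0^t\Big[\Gamma(s,\wt X^n(s)) + \langle F_x(s,\wt X^n(s)), A_n\wt X^n(s)+\wt a(s)\rangle_H \\
& & + \tfrac12\langle F_{xx}(s,\wt X^n(s))\wt b(s),\wt b(s)\rangle_{\cL_2^0} + \wt b(s)^*\Phi_x(s,\wt X^n(s))\Big]ds \\
& & + \int_0^t\Big[\Phi(s,\wt X^n(s)) + \wt b(s)^*F_x(s,\wt X^n(s))\Big]dW(s).
\end{eqnarray*}
Here the cross term $\langle F_x,A_n\wt X^n\rangle_H$ is well-defined because $A_n$ is bounded; I would then rewrite it as $\langle A_n^* F_x(s,\wt X^n(s)),\wt X^n(s)\rangle_H$ and prepare to pass to the limit.

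Next I would pass to the limit $n\to\infty$ term by term. The terms involving $F$, $\Gamma$, $\Phi$, $F_x$, $F_{xx}$, $\Phi_x$ evaluated at $\wt X^n(s)$ converge using the assumed continuity of these fields in $x$, the growth bounds $|\,\cdot\,|\le L(s)(1+|x|_H^k)$, the uniform-in-$n$ moment bound $\sup_n\mathbb{E}\sup_{s\le T}|\wt X^n(s)|_H^p<\infty$ (which follows from Proposition \ref{prop51} together with the bound for $\wt X$), and dominated convergence; for the stochastic integral I would use the It\^o isometry / BDG inequality plus these same bounds. The one genuinely delicate term is $\int_0^t\langle A_n^* F_x(s,\wt X^n(s)),\wt X^n(s)\rangle_H\,ds$: one must show it converges to $\int_0^t\langle A^* F_x(s,\wt X(s)),\wt X(s)\rangle_H\,ds$. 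The hard part will be this step, since $A_n^*$ does not converge to $A^*$ in operator norm and $F_x(s,\wt X(s))$ need not lie in $D(A^*)$ pointwise; the resolution is to use that $A^*F_x(s,\cdot)\in\mathbb{S}^0_\dbF([0,T]\times H)$ is assumed to be a well-defined continuous field with the growth bound $|A^*F_x(s,x)|_H\le L(s)(1+|x|_H^k)$, together with the identity $A_n^* = n(nI-A^*)^{-1}A^* = (nI-A^*)^{-1}n \circ$ (on $D(A^*)$) and the fact that $(nI-A^*)^{-1}n = n(nI-A^*)^{-1}\to I$ strongly; writing $A_n^*F_x = J_n^* (A^* F_x)$ with $J_n^* = n(nI-A^*)^{-1}$ the adjoint Yosida resolvent (bounded uniformly by the generalized-contraction assumption on $A$, hence on $A^*$), one gets $A_n^*F_x(s,\wt X^n(s)) \to A^*F_x(s,\wt X(s))$ in $H$ for a.e. $(s,\omega)$ by combining strong convergence $J_n^*\to I$ with the continuity of $A^*F_x(s,\cdot)$ and $\wt X^n(s)\to\wt X(s)$, and then dominated convergence (using the uniform resolvent bound and the growth control) finishes the $ds$-integral. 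Assembling all the limits yields \eqref{iw} for each fixed $t\in[0,T]$, and a standard continuity-in-$t$ argument (both sides are continuous processes) upgrades this to the stated almost-sure identity.
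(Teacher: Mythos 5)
Your treatment of the Yosida-approximation step is essentially the paper's own final step: you correctly single out $\int_0^t\langle A_n^*F_x(s,\wt X^n(s)),\wt X^n(s)\rangle_H\,ds$ as the delicate term and resolve it exactly as the paper does, writing $A_n^*F_x=n(nI-A^*)^{-1}A^*F_x$ and combining the uniform bound and strong convergence of $n(nI-A^*)^{-1}$ with the assumed continuity and growth of the field $A^*F_x$.

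There is, however, a genuine gap at the base of your argument: you invoke ``the classical It\^o--Kunita formula'' for the $H$-valued It\^o process $\wt X^n$ (with bounded drift $A_n\wt X^n+\wt a$) as a standard off-the-shelf statement. It is not. The classical Kunita formula concerns $\dbR^n$-valued processes driven by finitely many Brownian motions, and the stated purpose of this lemma (announced at the start of the Appendix) is precisely to extend it to $H$-valued processes driven by a cylindrical Brownian motion, for a random field that is only $C^2$ in $x$. The paper devotes two of its three steps to exactly this: Step 1 proves the formula for $\dbR^n$-valued state with cylindrical noise by mollifying in $x$ (Krylov's It\^o--Wentzell technique), applying It\^o's formula to $F(t,x)\phi_\epsilon(\wt X(t)-x)$, integrating in $x$ via the stochastic Fubini theorem, and integrating by parts before letting $\epsilon\to0$; Step 2 then passes to infinite-dimensional $H$ by finite-dimensional projections $P_n$ with stopping-time localization. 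You mention ``a projection onto finite-dimensional subspaces'' in your opening plan but never execute it, and even with that projection in place you would still need the $\dbR^n$-with-cylindrical-noise case, which cannot be cited as classical. A secondary, smaller issue: your limit passage rests on uniform-in-$n$ moment bounds plus dominated convergence, whereas the growth bound $L(s)(1+|x|_H^k)$ with $L\in L^4_{\dbF}(\Omega;L^2(0,T;\dbR))$ really requires the stopping times $\tau_n^R$ the paper introduces in order to produce an integrable pointwise dominating function; without that localization the domination step is not justified for general $k$.
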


\begin{proof} We divide the proof  into three steps.
	
\textbf{Step 1.} Following the approaches in \cite{Krylov2011}, we first establish the It\^o-Kunita formula \eqref{iw} for the case $A=0$ and $H=\mathbb{R}^n$ (for any fixed $n\in \mathbb{N}$) driven by cylindrical Brownian motion. 

Let $\phi \in C_c^\infty(\mathbb{R}^n,\mathbb{R})$ be a nonnegative, compactly supported function with $\operatorname{supp}(\phi) \subseteq B_{\mathbb{R}^n}(0,1)$ and $\int_{\mathbb{R}^n} \phi(x) dx = 1$. For $\epsilon > 0$, define the mollifier $\phi_\epsilon(x) := \epsilon^{-n} \phi(x/\epsilon)$. Applying It\^o's formula to $\phi_\epsilon(X(t)-x)$ gives 
$$
\begin{aligned}
\phi_\epsilon(\wt X(t)-x)=&\phi_\epsilon(\wt X(0)-x)+\int_0^t\langle\partial_x\phi_\epsilon(\wt X(s)-x),\wt{a}(s)\rangle_H
    ds+\int_0^t\wt{b}(s)^*\partial_x\phi_\epsilon(\wt X(s)-x)dW(s)\\
    &+\frac{1}{2}\int_0^t\langle\partial_x^2\phi_\epsilon(\wt X(s)-x)\wt{b}(s),\wt{b}(s)\rangle_{\cL_2^0} ds.
\end{aligned}
$$
Next, applying It\^{o}'s formula to $F(t,x)\phi_\epsilon(\wt X(t)-x)$ yields for all $t\in[0,T]$, a.s.: 
\begin{eqnarray}\label{c6}
      &&\quad F(t,x)\phi_\epsilon(\wt X(t)-x)\nonumber \\
      &&=F(0,x)\phi_\epsilon(\wt X_0-x) + \int_0^t \phi_\epsilon(\wt X(s)-x) \Gamma(s,x)  ds + \int_0^t \phi_\epsilon(\wt X(s)-x) \Phi(s,x)  dW(s) \nonumber \\
      &&\quad + \frac{1}{2} \int_0^t F(s,x) \langle \partial_x^2 \phi_\epsilon (\wt X(s)-x) \wt{b}(s), \wt{b}(s) \rangle_{\mathcal{L}_2^0}  ds + \int_0^t \Phi(s,x) \wt{b}(s)^* \partial_x \phi_\epsilon (\wt X(s)-x)  ds \nonumber \\
      &&\quad+ \int_0^t F(s,x) \wt{b}(s)^* \partial_x \phi_\epsilon (\wt X(s)-x)  dW(s)+\int_0^t F(s,x) \langle \partial_x \phi_\epsilon (\wt X(s)-x), \wt{a}(s) \rangle_H  ds. 
\end{eqnarray}
Under our assumptions on $\wt{a}$ and $\wt{b}$, we have 
$$ |\wt X(t)|_{C([0,T]; H)},\; |\wt{a}|_{L^{\infty}(0,T; H)},\;|\wt{b}|_{L^{\infty}(0,T; \cL_2^0)}<\infty,\quad \mbox{a.s.,}$$
ensuring all terms in \eqref{c6} are almost surely finite. For $r\in\mathbb{N}$, set $B_r:=\{x\in\mathbb{R}^n:|x|_H<r\}$. The growth conditions imply
\begin{eqnarray}\label{cc7}
&&\quad\int_0^T\int_{B_r}|\phi_\epsilon(\wt X(s)-x)\Phi(s,x)|_H^2+|F(s,x)\wt{b}(s)^*\partial_x\phi_\epsilon(\wt X(s)-x)|_H^2dxds\nonumber\\
&&\leq \int_0^T\int_{\mathbb{R}^n}\Big[|\phi_\epsilon(\wt X(s)-x)|_H^2+|\wt{b}(s)|_{\cL_2^0}^2|\partial_x\phi_\epsilon(\wt X(s)-x)|_H^2\Big]L(s)^2\big(1+|x|_H^k\big)^2dxds\nonumber\\
&&\leq \[\sup_{s\in [0,T]}\big(1+|\wt X(s)|_H^k+\epsilon\big)^2\int_0^TL(s)^2ds\]\int_{\mathbb{R}^n}|\phi_\epsilon(x)|^2dx\nonumber\\
&&\quad+\[\sup_{s\in [0,T]}\big(1+|\wt X(s)|_H^k+\epsilon\big)^2|\wt{b}|_{L^{\infty}(0,T; \cL_2^0)}^2\int_0^TL(s)^2ds\]\int_{\mathbb{R}^n}|\partial_x\phi_\epsilon(x)|_H^2dx\nonumber\\
&&<\infty,\ \ \text{a.s.} 
\end{eqnarray}
%

Integrating \eqref{c6} over $B_r$ and applying Fubini's theorem and the stochastic Fubini theorem (Theorem 2.141 in \cite{Lu2021}) yields  
\begin{eqnarray}\label{c7}
&&\quad\int_{B_r} F(t,x) \phi_\epsilon (\wt X(t) - x)  dx\nonumber\\
&&=\int_{B_r} F(0,x) \phi_\epsilon (\wt X_0 - x)  dx + \int_0^t \int_{B_r} \phi_\epsilon (\wt X(s) - x) \Gamma (s,x)  dx  ds  \nonumber\\
&&\quad + \int_0^t \int_{B_r} \phi_\epsilon (\wt X(s) - x) \Phi (s,x)  dx  dW(s)+ \frac{1}{2} \int_0^t \int_{B_r} F(s,x) \Big\langle \partial_x^2 \phi_\epsilon (\wt X(s) - x) \wt{b}(s) , \wt{b}(s) \Big\rangle_{\mathcal{L}_2^0}  dx  ds  \nonumber\\
&&\quad+ \int_0^t \int_{B_r} \Phi (s,x) \wt{b}(s)^{*} \partial_x \phi_\epsilon (\wt X(s) - x)  dx  ds+\int_0^t \int_{\mathbb{R}^n} F(s,x) \wt{b}(s)^{*} \partial_x \phi_\epsilon (\wt X(s) - x)  dx  dW(s) \nonumber\\
&&\quad+ \int_0^t \Big\langle \int_{B_r} F(s,x) \partial_x \phi_\epsilon (\wt X(s) - x)  dx , \wt{a}(s) \Big\rangle_H ds.
\end{eqnarray}

From inequality \eqref{cc7} and the  Dominated Convergence Theorem for $H$-valued process (see \cite[Theorem 2.18]{Lu2021}), as $r\rightarrow\infty$:
$$\int_0^T\Big|\int_{B_r}\phi_\epsilon(\wt X(s)-x)\Phi(s,x)dx-\int_{\mathbb{R}^n}\phi_\epsilon(\wt X(s)-x)\Phi(s,x)dx\Big|_H^2ds\rightarrow 0,\text{ a.s.,}$$
thus converges in probability, which implies
$$\int_0^T\int_{B_r}\phi_\epsilon(\wt X(s)-x)\Phi(s,x)dxdW(s)\rightarrow\int_0^T\int_{\mathbb{R}^n}\phi_\epsilon(\wt X(s)-x)\Phi(s,x)dxdW(s)$$
in probability. Following similar arguments to take $r\to\infty$ in \eqref{c7}, we obtain that
\begin{eqnarray}\label{c7-1}
&&\quad\int_{\mathbb{R}^n} F(t,x) \phi_\epsilon (\wt X(t) - x)  dx\nonumber\\
&&=\int_{\mathbb{R}^n} F(0,x) \phi_\epsilon (\wt X_0 - x)  dx + \int_0^t \int_{\mathbb{R}^n} \phi_\epsilon (\wt X(s) - x) \Gamma (s,x)  dx  ds  \nonumber\\
&&\quad + \int_0^t \int_{\mathbb{R}^n} \phi_\epsilon (\wt X(s) - x) \Phi (s,x)  dx  dW(s)+ \frac{1}{2} \int_0^t \int_{\mathbb{R}^n} F(s,x) \Big\langle \partial_x^2 \phi_\epsilon (\wt X(s) - x) \wt{b}(s) , \wt{b}(s) \Big\rangle_{\mathcal{L}_2^0}  dx  ds  \nonumber\\
&&\quad+ \int_0^t \int_{\mathbb{R}^n} \Phi (s,x) \wt{b}(s)^{*} \partial_x \phi_\epsilon (\wt X(s) - x)  dx  ds+\int_0^t \int_{\mathbb{R}^n} F(s,x) \wt{b}(s)^{*} \partial_x \phi_\epsilon (\wt X(s) - x)  dx  dW(s) \nonumber\\
&&\quad+ \int_0^t \Big\langle \int_{\mathbb{R}^n} F(s,x) \partial_x \phi_\epsilon (\wt X(s) - x)  dx , \wt{a}(s) \Big\rangle_H ds.
\end{eqnarray}

Integration by parts gives
\begin{eqnarray}\label{c7-2}
&&
\int_{\mathbb{R}^n}F(s,x)\langle\partial_x^2\phi_\epsilon(\wt X(s)-x)\wt{b}(s),\wt{b}(s)\rangle_{\cL_2^0}
dx =\int_{\mathbb{R}^n}\phi_\epsilon(\wt X(s)-x)\langle\partial_x^2F(s,x)\wt{b}(s),\wt{b}(s)\rangle_{\cL_2^0} dx,\nonumber\\
&&\int_{\mathbb{R}^n}\Phi(s,x)\wt{b}(s)^{*}\partial_x\phi_\epsilon(\wt X(s)-x)dx =\int_{\mathbb{R}^n}\phi_\epsilon(\wt X(s)-x)\wt{b}(s)^*\partial_x
\Phi(s,x)dx,\\
&&\int_{\mathbb{R}^n}
F(s,x)\partial_x\phi_\epsilon(\wt X(s)-x)dx =\int_{\mathbb{R}^n}\phi_\epsilon(\wt X(s)-x)\partial_x
F(s,x)dx. \nonumber
\end{eqnarray}
Finally, taking the limit $\epsilon\rightarrow0$ in \eqref{c7-1} and noting \eqref{c7-2}, we obtain \eqref{iw}.

\ms

\textbf{Step 2.} In this step, we establish \eqref{iw} for $A=0$ in general separable Hilbert spaces through finite-dimensional projections. Let $\{e_i\}_{i=1}^\infty$ be a complete orthonormal basis of $H$, and define for each $n\in\mathbb{N}$ the subspace $H_n := \text{span}\{e_1,...,e_n\}$ with projection $P_n:H\to H_n$. The projected process satisfies:
$$
\wt X^{n}(t) =P_n\wt X_0+\int_0^t P_n \wt{a}(s)ds + \int_0^tP_n \wt{b}(s)dW(s).
$$
Define stopping times:
$$\tau^R:=\inf\{s\in [0,T]:|\wt X(s)|_H>R\},\quad \tau_n^R:=\inf\{s\in [0,T]:|\wt X^n(s)|_H>R+1,|X(s)|_H>R\}.$$ 
There exists a subsequence (still denoted $X^n$) converging uniformly to $X$ on $\tilde{\Omega}\subset\Omega$ with $\mathbb{P}(\tilde{\Omega})=1$, yielding 
$$\lim_{n\rightarrow \infty}\tau_{n}^R=\tau^R\q \mbox{ in } \wt{\Omega},$$
with pointwise convergence:
$$\lim_{n\rightarrow \infty}\mathbf{1}_{[0,s\wedge \tau_n^R]}=\mathbf{1}_{[0,s\wedge \tau^R]}\q \text{ for } s\in [0,T].$$
Define the projected stochastic field:
$$
F(t, P_n x) \deq F(0, P_n x) + \int_0^t \Gamma(s, P_n x)ds + \int_0^t \Phi(s, P_n x)dW(s).
$$
Applying the finite-dimensional It\^{o}-Kunita formula yields
\begin{eqnarray}\label{FPn}
F(t\wedge \tau_n^R,\wt X^{n}(t\wedge \tau_n^R)) &\3n=\3n& F(0,\wt X^{n}(0)) + \int_0^t\mathbf{1}_{[0,s\wedge \tau_n^R]} \bigg[ \Gamma(s, \wt X^{n}(s)) + \langle P_n\wt{a}(s), F_x(s, \wt X^{n}(s)) \rangle_{H} \nonumber\\
&\3n\3n&+ \frac{1}{2} \langle F_{xx}(s, \wt X^{n}(s))P_n\wt{b}(s), P_n\wt{b}(s) \rangle_{\mathcal{L}_2^0}
+ \langle \Phi_x(s, \wt X^{n}(s)), P_n\wt{b}(s) \rangle_{\mathcal{L}_2^0} \bigg] ds \nonumber\\
&\3n\3n&+ \int_0^t\mathbf{1}_{[0,s\wedge \tau_n^R]} \bigg[ \Phi(s, \wt X^{n}(s)) + (P_n\wt{b}(s))^* F_x(s, \wt X^{n}(s)) \bigg] dW(s).
\end{eqnarray}
By continuity of $F$,
$$
F(t\wedge \tau_n^R,\wt X^{n}(t\wedge \tau_n^R)) \to F(t\wedge \tau^R,\wt X(t\wedge \tau^R)),\quad F(0,\wt X^{n}_0) \to F(0,\wt X_0), \q\text{ a.s.}
$$

The growth conditions on $\Gamma$ yield the uniform bound: 
$$
\mathbf{1}_{[0,s\wedge \tau_n^R]}|\Gamma(s, \wt X^n(s))| \leq [1+(R+1)^k]L(s) \in L_{\mathbb{F}}^1(0,T).
$$
By continuity of $\Gamma$ in the spatial variable and the  Dominated Convergence Theorem (real-valued version, which we omit in the content below), we obtain 
$$
\begin{aligned}
\int_0^t \mathbf{1}_{[0,s\wedge \tau_n^R]}\Gamma(s, \wt X^{n}(s))ds \to \int_0^t\mathbf{1}_{[0,s\wedge \tau^R]} \Gamma(s, \wt X(s))ds,\quad  \text{ a.s.}
\end{aligned}
$$
For the remaining deterministic terms, by assumptions on $a,b,L$, we establish similar bounds
$$
\begin{aligned}
&\mathbf{1}_{[0,s\wedge \tau_n^R]}|\langle P_n\wt{a}(s),F_x(s, \wt X^{n}(s)) \rangle_{H}|\le L(s)|\wt{a}(s)|_H [1+(R+1)^k]\in  L_{\mathbb{F}}^1(0,T),\\
&\mathbf{1}_{[0,s\wedge \tau_n^R]}|\big<F_{xx}(s,  \wt X^{n}(s))P_n\wt{b}(s), P_n\wt{b}(s)\big>_{\mathcal{L}_2}|\le L(s)|\wt{b}(s)|_{\cL_2^0}|\wt{b}(s)|_{\cL_2^0}[1+(R+1)^k]\in L_{\mathbb{F}}^1(0,T),\\
&\mathbf{1}_{[0,s\wedge \tau_n^R]}|\big<\Phi_x(s,  \wt X^{n}(s)),P_n\wt{b}(s)\big>_{\mathcal{L}_2^0}|\le L(s)|\wt{b}(s)|_{\mathcal{L}_2^0} [1+(R+1)^k] \in L_{\mathbb{F}}^1(0,T). 
\end{aligned}
$$
Thus, by the Dominated Convergence Theorem, taking the limit $n \to \infty$, these deterministic integral terms in \eqref{FPn} converge to the corresponding terms.

The convergence of the stochastic integral terms is established through the following argument. By assumptions on $F_x$ and $\wt{b}$,  
$$\mathbf{1}_{[0,s\wedge \tau_n^R]}|P_n\wt{b}(s)^*F_x(s, \wt X^{n}(s))|_{\wt{H}}+\mathbf{1}_{[0,s\wedge \tau^R]}|\wt{b}(s)^*|F_x(s, \wt X(s))|_{\wt{H}}\le |b(s)|_{\cL_2^0}L(s)[1+(R+1)^k]\in L_{\mathbb{F}}^2(0,T).$$
Applying the Dominated Convergence Theorem (see \cite[Theorem 2.18]{Lu2021}) for stochastic integrals terms yields 
\begin{eqnarray*}
&&\mathbb{E}\Big|\int_0^t \mathbf{1}_{[0,s\wedge \tau_n^R]}(P_n\wt{b}(s))^* F_x(s, \wt X^{n}(s))dW(s)-\int_0^t\mathbf{1}_{[0,s\wedge \tau^R]}\wt{b}(s)^* F_x(s, \wt X(s))dW(s)\Big|^2\\
&&\le \int_0^t\mathbb{E}\Big|\mathbf{1}_{[0,s\wedge \tau_n^R]}(P_n\wt{b}(s))^* F_x(s, \wt X^{n}(s))-\mathbf{1}_{[0,s\wedge \tau^R]}\wt{b}(s)^* F_x(s, \wt X(s))\Big|_H^2ds\rightarrow 0,\quad\text{ as }n\rightarrow \infty.
\end{eqnarray*}
Similarly, 
\begin{eqnarray*}
&&\mathbb{E}\Big|\int_0^t \mathbf{1}_{[0,s\wedge \tau_n^R]}\Phi(s, \wt X^{n}(s))dW(s)-\int_0^t\mathbf{1}_{[0,s\wedge \tau^R]}\Phi(s, \wt X(s))dW(s)\Big|^2\\
&&\le \int_0^t\mathbb{E}\Big|\mathbf{1}_{[0,s\wedge \tau_n^R]}\Phi(s, \wt X^{n}(s))-\mathbf{1}_{[0,s\wedge \tau^R]}\Phi(s, \wt X(s))\Big|_H^2ds\rightarrow 0,\quad\text{ as }n\rightarrow \infty.
\end{eqnarray*}

Therefore, up to a subsequence, the stochastic integral terms converge $\mathbb{P}$-a.s. Then, letting $R\rightarrow \infty$, the equality \eqref{iw} is proved for $A=0$.

\textbf{Step 3.} In this step, we now establish the result for the general case.

Consider the approximating equation: 
$$
\wt X^{n}(t) = X_0 + \int_0^t\left( A_{n} \wt X^{n}(s) + \wt{a}(s) \right) ds + \int_0^t\wt{b}(s)  dW(s), \quad t \in [0,T].
$$

By using Proposition \ref{prop51}, we have
$$\wt X^{n} \to \wt X\text{ in }L^2(\Omega; C([0,T]; H)),\quad \text{as } n\rightarrow\infty.$$
This allows extraction of a subsequence (still denoted $\{\wt X^{n}\}_{n=1}^\infty$) converging uniformly to $\wt X$ on $\wt{\Omega}\subset\Om$ with $\mathbb{P}(\wt{\Omega})=1$. Define $\tau^R:=\inf\{s\in [0,T]:|\wt X(s)|_H>R\}$ and $\tau_n^R:=\inf\{s\in [0,T]:|\wt X^n(s)|_H>R+1,|\wt X(s)|_H>R\}$. On $\wt{\Omega}$, we have  
$$\lim_{n\rightarrow \infty}\tau_{n}^R=\tau^R,$$
and
$$\lim_{n\rightarrow \infty}\mathbf{1}_{[0,s\wedge \tau_n^R]}=\mathbf{1}_{[0,s\wedge \tau^R]},\text{ pointwise on } [0,T].$$ 
Since $A_{n}$ is bounded, we apply the established result for $A = 0$ to obtain
\begin{eqnarray}\label{Flambda}
&&F(t\wedge \tau_n^R, \wt X^{n}(t\wedge \tau_n^R))\nonumber\\ && =  F(0,\wt X_0)  + \int_0^t \mathbf{1}_{[0,s\wedge \tau_n^R]}(s)\Big[ \Gamma(s, \wt X^{n}(s)) + \langle A_{n}^* F_x(s, \wt X^{n}(s)), \wt X^{n}(s) \rangle_{H} \nonumber\\
&& \q+ \langle F_x(s, \wt X^{n}(s)), \wt{a}(s) \rangle_{H} + \frac{1}{2} \left\langle F_{xx}(s,\wt X^{n}(s)) \wt{b}(s), \wt{b}(s) \right\rangle_{\mathcal{L}_2^0}+ \wt{b}(s)^* \Phi_x(s, \wt X^{n}(s)) \Big] ds \nonumber\\
&&\q + \int_0^t \mathbf{1}_{[0,s\wedge \tau_n^R]}(s)\Big[ \Phi(s, \wt X^{n}(s)) + \wt{b}(s)^* F_x(s, \wt X^{n}(s)) \Big] dW(s).
\end{eqnarray}
Similarly, by standard properties of the Yosida approximation:
\begin{itemize}
\item $A_{n} x \to A x$ for $x \in D(A)$, and $A_{n}^* y \to A^* y$ for $y \in D(A^*)$;
\item there exists a constant $\cC$ independent of $n$ for sufficient big $n$ such that  $|n (nI-A)^{-1}|_{\cL(H)}\le \cC$.
\end{itemize}
Combining the condition $A^*F_x \in \mathbb{S}_{\mathbb{F}}^0([0,T] \times H)$ with the growth condition on $A^*F_x$ yields
\begin{eqnarray}
\mathbf{1}_{[0,s\wedge \tau_n^R]}(s)|\langle A_n^*F_x(t,x),\wt X^n(s)\rangle_H| &\3n\le\3n& |n (nI-A)^{-1}A^*F_x(t,\wt X^n(s))|_H|\wt X^n(s)|_H\nonumber\\
&\3n\le\3n& \cC L(s) [1 + (R+1)^k] (R+1)\in L_{\mathbb{F}}^1(0,T).
\end{eqnarray}
Thus, the Dominated Convergence Theorem yields:
$$
\int_0^t \mathbf{1}_{[0,s\wedge \tau_n^R]}(s)\langle A_{n}^* F_x(s, \wt X^{n}(s)), \wt X^{n}(s) \rangle_{H} ds \to \int_0^t \mathbf{1}_{[0,s\wedge \tau^R]}(s) \langle A^* F_x(s, \wt X(s)), \wt X(s) \rangle_{H} ds \quad \text{a.s.}
$$

Similar to the convergence arguments in Step 2, by taking the limit as $n \to \infty$ in equation \eqref{Flambda}, and by the growth conditions and continuity of $F$, $\Gamma$, $\Phi$, and their derivatives, those integrands in \eqref{Flambda} converge to the corresponding terms.

Taking the limit as $R \to \infty$ in equation \eqref{Flambda} completes the proof.
\end{proof}

\end{document}